\documentclass[11pt,english]{amsart}
\usepackage[T1]{fontenc}
\usepackage[latin9]{inputenc}
\usepackage{babel}
\usepackage{amsthm}
\usepackage{amssymb}
\usepackage[unicode=true,
 bookmarks=true,bookmarksnumbered=false,bookmarksopen=false,
 breaklinks=false,pdfborder={0 0 1},backref=false,colorlinks=false]
 {hyperref}
\hypersetup{pdftitle={Essential normality and the decomposability of homogeneous submodules},
 pdfauthor={Matthew Kennedy}}

\makeatletter
\numberwithin{equation}{section}
\numberwithin{figure}{section}
\theoremstyle{plain}
\newtheorem{thm}{\protect\theoremname}[section]
  \theoremstyle{definition}
  \newtheorem{defn}[thm]{\protect\definitionname}
  \theoremstyle{remark}
  \newtheorem{rem}[thm]{\protect\remarkname}
  \theoremstyle{plain}
  \newtheorem{lem}[thm]{\protect\lemmaname}
  \theoremstyle{plain}
  \newtheorem{prop}[thm]{\protect\propositionname}
  \theoremstyle{definition}
  \newtheorem{example}[thm]{\protect\examplename}

\usepackage[T1]{fontenc}
\usepackage{ae,aecompl}

\usepackage{microtype}

\makeatother

  \providecommand{\definitionname}{Definition}
  \providecommand{\examplename}{Example}
  \providecommand{\lemmaname}{Lemma}
  \providecommand{\propositionname}{Proposition}
  \providecommand{\remarkname}{Remark}
\providecommand{\theoremname}{Theorem}

\begin{document}

\title[Essential normality and decomposability]{Essential normality and the decomposability of homogeneous submodules}

\author{Matthew Kennedy}

\address{School of Mathematics and Statistics, Carleton University, 1125 Colonel
By Drive, Ottawa, Ontario K1S 5B6, Canada}

\email{mkennedy@math.carleton.ca}
\begin{abstract}
We establish the essential normality of a large new class of homogeneous
submodules of the finite rank $d$-shift Hilbert module. The main
idea is a notion of essential decomposability that determines when
a submodule can be decomposed into the algebraic sum of essentially
normal submodules. We prove that every essentially decomposable submodule
is essentially normal, and introduce methods for establishing that
a submodule is essentially decomposable. It turns out that many submodules
have this property. We prove that many of the submodules considered
by other authors are essentially decomposable, and in addition establish
the essential decomposability of a large new class of homogeneous
submodules. Our results support Arveson's conjecture that every homogeneous
submodule of the finite rank $d$-shift Hilbert module is essentially
normal.
\end{abstract}

\subjclass[2000]{47A13, 47A20, 47A99, 14Q99, 12Y05}

\maketitle

\section{Introduction}

In this paper we establish new results in higher-dimensional operator
theory that support Arveson's conjecture of a corresponence between
algebraic varieties and C{*}-algebras of essentially normal operators.
Specifically, we prove the essential normality of a large new class
of homogeneous submodules of the finite rank $d$-shift Hilbert module
introduced by Arveson in \cite{Arv98}. Our work provides a new perspective
on Arveson's conjecture that every homogeneous submodule is essentially
normal.

For fixed $d\geq1$, let $\mathbb{C}[z]=\mathbb{C}[z_{1},\ldots,z_{d}]$
denote the algebra of complex polynomials in $d$ variables. With
the introduction of an appropriate inner product, $\mathbb{C}[z]$
can be completed to a space of analytic functions on the complex unit
ball called the Drury-Arveson space, which we denote by $H_{d}^{2}$.
The coordinate multiplication operators $M_{z_{1}},\ldots,M_{z_{d}}$,
defined on $\mathbb{C}[z]$ by
\[
\left(M_{z_{i}}p\right)\left(z_{1},\ldots,z_{d}\right)=z_{i}p\left(z_{1},\ldots,z_{d}\right),\quad p\in\mathbb{C}[z],\ 1\leq i\leq d,
\]
extend to bounded linear operators on $H_{d}^{2}$, and $\left(M_{z_{1}},\dots,M_{z_{d}}\right)$
forms a contractive $d$-tuple of operators called the $d$-shift.
The space $H_{d}^{2}$ can be naturally viewed as a module over $\mathbb{C}[z]$,
with the module action given by
\[
pf=p\left(M_{z_{1}},\ldots,M_{z_{d}}\right)f,\quad p\in\mathbb{C}[z],\ f\in H_{d}^{2}.
\]
Endowed with this module action, $H_{d}^{2}$ is called the $d$-shift
Hilbert module.

The $d$-shift and the $d$-shift Hilbert module $H_{d}^{2}$ are
of fundamental importance in multivariable operator theory, and they
have received a great deal of attention in recent years (see for example
\cite{Arv98}, \cite{Arv00}, \cite{Arv02}, \cite{Arv05}, \cite{Arv07},
\cite{DRS11}, \cite{DRS12}, \cite{Dou06}, \cite{GW08}, \cite{Esc11},
\cite{Sha11}).

Let $M$ be a submodule of $H_{d}^{2}$, so that $M$ is an invariant
subspace for each coordinate operator $M_{z_{1}},\ldots,M_{z_{d}}$.
If we identify the quotient space $H_{d}^{2}/M$ with the orthogonal
complement $M^{\perp}$, then we obtain a $d$-tuple of quotient operators
$\left(T_{1},\ldots,T_{d}\right)$ by compressing the $d$-shift $\left(M_{z_{1}},\ldots,M_{z_{d}}\right)$
to $H_{d}^{2}/M$. Since the operators $M_{z_{1}},\ldots,M_{z_{d}}$
commute, the operators $T_{1},\ldots,T_{d}$ also commute, and in
fact, every commuting contractive $d$-tuple of operators can be realized
as a quotient of the $d$-shift in precisely this way, provided that
one is willing to increase the multiplicity and consider vector-valued
functions (see for example \cite{Arv98}).

The quotient module $H_{d}^{2}/M$ is said to be \emph{$p$-essentially
normal} if the self-commutators 
\[
T_{i}^{*}T_{j}-T_{j}T_{i}^{*},\quad1\leq i,j\leq d
\]
belong to the Schatten $p$-class $\mathcal{L}^{p}$ for $1\leq p\leq\infty$
(where $\mathcal{L}^{\infty}$ denotes the ideal of compact operators
$\mathcal{K}$). We also obtain a $d$-tuple of operators $(S_{1},\ldots,S_{d})$
by restricting the elements in the $d$-shift $(M_{z_{1}},\ldots,M_{z_{d}})$
to $M$, and the module $M$ is similarly said to be $p$-essentially
normal if the self-commutators
\[
S_{i}^{*}S_{j}-S_{j}S_{i}^{*},\quad1\leq i,j\leq d
\]
belong to $\mathcal{L}^{p}$ for $1\leq p\leq\infty$. In fact, it
turns out that these notions of essential normality are equivalent
for $p>d$, since the submodule $M$ is $p$-essentially normal if
and only if the quotient module $H_{d}^{2}/M$ is $p$-essentially
normal (see for example \cite{Arv05}). 

The purpose of this paper is to consider the essential normality of
submodules of the $d$-shift Hilbert module $H_{d}^{2}$, and more
generally, the essential normality of submodules of the finite rank
$d$-shift Hilbert module $H_{d}^{2}\otimes\mathbb{C}^{r}$, obtained
by tensoring $H_{d}^{2}$ with $\mathbb{C}^{r}$, for a positive integer
$r\geq1$. Note that elements in $H_{d}^{2}\otimes\mathbb{C}^{r}$
can be viewed as analytic vector-valued functions on the complex unit
ball.

Arveson observed in \cite{Arv02} that $H_{d}^{2}\otimes\mathbb{C}^{r}$
is itself $p$-essentially normal for every $p>d$, and motivated
by applications to multivariable Fredholm Theory, he asked whether
every homogeneous submodule of the finite rank $d$-shift Hilbert
module, i.e. every submodule generated by homogeneous polynomials,
is $p$-essentially normal for every $p>d$.

In \cite{Arv05}, Arveson conjectured that this question should have
an affirmative answer, and established the truth of his conjecture
for submodules of $H_{d}^{2}\otimes\mathbb{C}^{r}$ generated by monomials,
i.e. polynomials of the form $z_{1}^{\alpha_{1}}\cdots z_{d}^{\alpha_{d}}\otimes\xi$
for $\alpha=\left(\alpha_{1},\ldots,\alpha_{d}\right)$ in $\mathbb{N}_{0}^{d}$
and $\xi$ in $\mathbb{C}$. More recently, in \cite{GW08}, Guo and
Wang proved that every submodule of $H_{d}^{2}\otimes\mathbb{C}^{r}$
generated by a single homogeneous polynomial is $p$-essentially normal
for every $p>d$. Additionally, they proved that for $d\leq3$, every
homogeneous submodule of $H_{d}^{2}\otimes\mathbb{C}^{r}$ is $p$-essentially
normal for every $p>d$. However, none of these results apply to homogeneous
submodules of $H_{d}^{2}\otimes\mathbb{C}^{r}$ when $d\geq4$ and
the submodule is generated by two or more non-monomials. 

In this paper, we establish Arveson's conjecture for a large new class
of homogeneous submodules of the finite-rank $d$-shift Hilbert module.
This class includes homogeneous submodules of $H_{d}^{2}\otimes\mathbb{C}^{r}$,
with $d$ arbitrarily large, that are generated by an arbitrary number
of non-monomials. For example, we obtain the following result.
\begin{thm}
Let $F_{1},\ldots,F_{n}$ be sets of homogeneous polynomials in $\mathbb{C}[z_{1},\ldots,z_{d}]$.
Suppose that there are disjoint subsets $Z_{1},\ldots,Z_{n}$ of $\{z_{1},\ldots,z_{d}\}$,
each of size at most $2$, such that
\[
F_{i}\subseteq\mathbb{C}[Z_{i}],\quad1\leq i\leq n.
\]
Let $X_{1},\ldots,X_{n}$ be arbitrary sets of vectors in $\mathbb{C}^{r}$.
Then the $H_{d}^{2}\otimes\mathbb{C}^{r}$ submodule generated by
the set of vector-valued polynomials
\[
\{p\otimes\xi\mid p\in F_{i},\ \xi\in X_{i},\ 1\leq i\leq n\}
\]
is $p$-essentially normal for every $p>d$. 
\end{thm}

We obtain Theorem 1.1 as a special case of the following more broadly
applicable result.
\begin{thm}
\label{thm:big-class-ess-norm-1}Let $F_{1},\ldots,F_{n}$ be sets
of polynomials in $\mathbb{C}[z_{1},\ldots,z_{d}]$ that each generate
$p$-essentially normal submodules of $H_{d}^{2}$. Suppose that there
are disjoint subsets $Z_{1},\ldots,Z_{n}$ of $\{z_{1},\ldots,z_{d}\}$,
such that
\[
F_{i}\subseteq\mathbb{C}[Z_{i}],\quad1\leq i\leq n.
\]
Let $X_{1},\ldots,X_{n}$ be arbitrary sets of vectors in $\mathbb{C}^{r}$.
Then the $H_{d}^{2}\otimes\mathbb{C}^{r}$ submodule generated by
the set of vector-valued polynomials
\[
\{p\otimes\xi\mid p\in F_{i},\ \xi\in X_{i},\ 1\leq i\leq n\}
\]
is $p$-essentially normal for every $p>d$.
\end{thm}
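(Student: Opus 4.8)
The plan is to obtain Theorem~\ref{thm:big-class-ess-norm-1} as an application of the theory of essential decomposability: we realize the submodule in question as a sum of $p$-essentially normal submodules, verify that this sum is essentially decomposable by means of the convex-geometric criterion, and then appeal to the fact that every essentially decomposable submodule is $p$-essentially normal.

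\emph{Identifying the pieces.} Let $M$ denote the $H_d^2\otimes\mathbb C^r$ submodule generated by $\{p\otimes\xi : p\in F_i,\ \xi\in X_i,\ 1\le i\le n\}$, and for each $i$ let $M_i$ be the submodule generated by $\{p\otimes\xi : p\in F_i,\ \xi\in X_i\}$. Because $\mathcal P_d$ acts trivially on the $\mathbb C^r$ factor, a direct computation shows that $M_i = N_i\otimes V_i$, where $N_i\subseteq H_d^2$ is the submodule generated by $F_i$ and $V_i=\overline{\operatorname{span}}\,X_i\subseteq\mathbb C^r$, and that $M = M_1+\dots+M_n$ as an algebraic sum of submodules. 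By hypothesis each $N_i$ is $p$-essentially normal, and tensoring with the finite-dimensional space $V_i$ preserves $p$-essential normality, so each $M_i$ is $p$-essentially normal. Thus it remains only to show that $M$, equipped with the decomposition $M=M_1+\dots+M_n$, is essentially decomposable; granting this, the $p$-essential normality of $M$ is immediate from the fact that essential decomposability implies $p$-essential normality.

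\emph{Checking essential decomposability.} All the submodules involved are homogeneous, so essential decomposability can be tested grade by grade: at each grade $k$ we have the finite-dimensional decomposition $M^{(k)}=\sum_i M_i^{(k)}$, and what must be controlled is the discrepancy between the projection onto $M^{(k)}$ and the inclusion--exclusion combination of the projections onto the intersections $\bigcap_{i\in I}M_i^{(k)}$, organized so as to yield a Schatten-$\mathcal L^p$ bound after summing over $k$. This is where the hypothesis that the variable sets $Z_1,\dots,Z_n$ are disjoint does the work. Since $F_i\subseteq\mathbb C[Z_i]$ with the $Z_i$ pairwise disjoint, the generators of the different $N_i$ involve disjoint coordinate directions, so a common element of several of the $M_i^{(k)}$ must be assembled from products of generators with disjoint supports; consequently the subspaces $M_i^{(k)}$ sit in the kind of general position for which the inclusion--exclusion formula for projections holds up to a small, controllable error. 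The convex-geometric sufficient condition for essential decomposability is tailored to certify exactly this: one encodes the positional data of the $M_i^{(k)}$ in terms of finitely many convex bodies attached to the supports of the $F_i$, and the disjointness of the $Z_i$ places this configuration inside the admissible region, uniformly in the grade $k$.

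I expect this last step --- translating the combinatorial input ``disjoint supports'' into a verification of the convex-geometric hypothesis, with all estimates uniform in $k$ --- to be the main obstacle; once it is in place, essential decomposability holds and Theorem~\ref{thm:big-class-ess-norm-1} follows. Theorem~1.1 is then the special case in which each $Z_i$ has size at most two, since a set of homogeneous polynomials in at most two variables generates a $p$-essentially normal submodule of $H_d^2$ by the known low-dimensional results.
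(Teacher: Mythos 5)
Your overall strategy matches the paper's: decompose the submodule as $M_i = N_i\otimes V_i$, note each piece is $p$-essentially normal, show the algebraic sum is closed, and invoke the fact that essentially decomposable submodules are essentially normal. But there is a genuine gap at the crucial step, and it is compounded by a misstatement earlier. First, the claim that ``a direct computation shows that $M = M_1+\dots+M_n$ as an algebraic sum'' is not right: what a direct computation gives is $M = M_1\vee\dots\vee M_n$, the \emph{closed} span. Whether the algebraic sum is already closed is exactly the content of the theorem's difficulty, and it is the same thing as the essential decomposability you say ``remains to be shown'' --- so your proof as written both assumes and defers the key point.

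Second, the paragraph that is supposed to close this gap does not contain an argument. The paper's mechanism is concrete: for homogeneous $p\in F_i$ and $q\in F_j$ with $i\ne j$ in disjoint variables, every nonzero mixed derivative $\partial^\alpha$ annihilates at least one of $p,q$, so the Guo--Wang identity forces the operators $M_pM_p^*$ and $M_qM_q^*$ to commute (Lemma \ref{lem:disjoint-comm}); hence the range projections $P_{N_1},\dots,P_{N_n}$ commute exactly (Lemma \ref{lem:generators-comm-decomp}), i.e.\ the family is perpendicular; hence $P_{N_1}+\dots+P_{N_n}$ has closed range and the sum is closed. Your sketch instead speaks of ``convex bodies attached to the supports of the $F_i$,'' an ``admissible region,'' grade-by-grade Schatten bounds, and inclusion--exclusion holding ``up to a small, controllable error.'' None of this corresponds to a mechanism in the paper or to one I can see working: the ``convex geometry'' here is the Friedrichs angle between subspaces, not literal convex bodies, and the projections commute exactly rather than approximately. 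You also skip a genuinely nontrivial point: even once $\{N_1,\dots,N_n\}$ is perpendicular, the projections $P_{N_i}\otimes E_i$ need not commute (the $E_i$ onto the spans of the $X_i$ are arbitrary), so closedness of $\sum N_i\otimes V_i$ requires a separate inductive argument (Theorem \ref{thm:angle-tensored-perpendicular-submodules}). As you yourself acknowledge, the main obstacle is left unresolved, so the proposal does not constitute a proof.
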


More generally, we obtain Theorem 1.2 as an application of a new method
for establishing the essential normality of a submodule of $H_{d}^{2}\otimes\mathbb{C}^{r}$.
First, we observe that if $M_{1},\ldots,M_{n}$ are $p$-essentially
normal submodules of $H_{d}^{2}\otimes\mathbb{C}^{r}$ with the property
that the algebraic sum $M_{1}+\ldots+M_{n}$ is closed, then the $H_{d}^{2}\otimes\mathbb{C}^{r}$
submodule generated by $M_{1},\ldots,M_{n}$ is also $p$-essentially
normal. Reversing this argument tells us that if we want to prove
the $p$-essential normality of a $H_{d}^{2}\otimes\mathbb{C}^{r}$
submodule $M$, then we should try to obtain a decomposition of $M$
as $M=M_{1}+\ldots+M_{n}$, where $M_{1},\ldots,M_{n}$ are $p$-essentially
normal submodules of $H_{d}^{2}\otimes\mathbb{C}^{r}$.

Note that by Guo and Wang's result on the essential normality of submodules
generated by a single homogeneous polynomial, every homogeneous submodule
of $H_{d}^{2}\otimes\mathbb{C}^{r}$ can be written as a closed sum
of essentially normal submodules. Therefore, the main difficulty is
understanding when, if ever, the algebraic sum $M_{1}+\ldots+M_{n}$
is closed. While this problem seems quite difficult in general, we
prove below that this sum is closed in many interesting cases.

Our results also imply the essential normality of submodules that
have been considered by other authors. For example, our results imply
the essential decomposability of submodules of $H_{d}^{2}\otimes\mathbb{C}^{r}$
generated by monomials, and so we obtain a new proof of Arveson's
main result in \cite{Arv05}.

In addition to this introduction, this paper has five other sections.
In Section 2, we provide a brief review of the basic background material.
In Section 3, we introduce the notion of essential decomposability,
and relate it to Shalit's stable division property from \cite{Sha11}.
In Section 4, we introduce a notion of perpendicularity for a family
of submodules that implies essential decomposability. In Section 5,
we establish the the main results on essential normality.

\section{Preliminaries}

\subsection{\label{sub:prelim-drury-areson}The $d$-shift Hilbert module}

For fixed $d\geq1$, let $\mathbb{C}[z]=\mathbb{C}[z_{1},\ldots,z_{d}]$
denote the algebra of complex polynomials in $d$ variables. For monomials
in $\mathbb{C}[z]$, it is convenient to use the multi-index notation
\[
z^{\alpha}=z_{1}^{\alpha_{1}}\cdots z_{d}^{\alpha_{d}},\quad\alpha=(\alpha_{1},\ldots,\alpha_{d})\in\mathbb{N}_{0}^{d}.
\]
The \emph{Drury-Arveson space} $H_{d}^{2}$ is the completion of $\mathbb{C}[z]$
with respect to the inner product $\left\langle \cdot,\cdot\right\rangle $,
defined on monomials by
\[
\langle z^{\alpha},z^{\beta}\rangle=\delta_{\alpha\beta}\frac{\alpha!}{|\alpha|!},\quad\alpha,\beta\in\mathbb{N}_{0}^{d},
\]
where we have written $\alpha!=\alpha_{1}!\cdots\alpha_{d}!$ and
$|\alpha|=\alpha_{1}+\ldots+\alpha_{d}$ for $\alpha=(\alpha_{1},\ldots,\alpha_{d})$
in $\mathbb{N}_{0}^{d}$.

Let $M_{z_{1}},\ldots,M_{z_{d}}$ denote the coordinate multiplication
operators on $\mathbb{C}[z]$ corresponding to the variables $z_{1},\ldots,z_{d}$
respectively,
\[
M_{z_{i}}p=z_{i}p,\quad p\in\mathbb{C}[z],\ 1\leq i\leq d.
\]
Then these operators extend to bounded linear operators on $H_{d}^{2}$,
and the $d$-tuple $(M_{z_{1}},\ldots,M_{z_{d}})$ is called the \emph{$d$-shift}.

The elements in $H_{d}^{2}$ can be identified with analytic functions
on the complex unit ball, and $H_{d}^{2}$ can be viewed as a Hilbert
module over the algebra of polynomials $\mathbb{C}[z]$, with the
module action defined by
\[
pf=p(M_{z_{1}},\ldots,M_{z_{d}})f,\quad p\in\mathbb{C}[z],\ f\in H_{d}^{2}.
\]
Endowed with this module action, $H_{d}^{2}$ is called the \emph{$d$-shift
Hilbert module}. The importance of this construction in multivariable
operator theory was recognized by Arveson in his comprehensive treatment
\cite{Arv98}.

Let $N$ denote the number operator, the unbounded self-adjoint operator
defined on monomials in $\mathbb{C}[z]$ by
\[
Nz^{\alpha}=|\alpha|z^{\alpha},\quad\alpha\in\mathbb{N}_{0}^{d},
\]
and extended to polynomials in $\mathbb{C}[z]$ by linearity. Then
the operator $\left(N+1\right)^{-1}$ extends to a bounded operator
on $H_{d}^{2}$. Let $\partial_{1},\ldots,\partial_{d}$ denote the
operators that act on $\mathbb{C}[z]$ by partial differentiation
with respect to the variables $z_{1},\ldots,z_{d}$ respectively.
Then restricted to $\mathbb{C}[z]$, we can write 
\[
M_{z_{i}}^{*}=\left(N+1\right)^{-1}\partial_{i},\quad1\leq i\leq d.
\]
The fact that the adjoint operators take this form is one reason for
the importance of the $d$-shift (see \cite{Arv98} for details).

More generally, for a polynomial $p$ in $\mathbb{C}[z]$, let $M_{p}$
denote the operator on $H_{d}^{2}$ corresponding to multiplication
by $p$,
\[
M_{p}f=pf,\quad f\in H_{d}^{2}.
\]

\subsection{\label{sub:finite-rank-d-shift}The finite rank $d$-shift Hilbert
module}

We will need to consider a higher multiplicity version of $H_{d}^{2}$.
For fixed $r\geq1$, the\emph{ $d$-shift Hilbert module of rank $r$,}
$H_{d}^{2}\otimes\mathbb{C}^{r}$, is the Hilbert space tensor product
of $H_{d}^{2}$ with the $r$-dimensional Hilbert space $\mathbb{C}^{r}$.
Note that we could also realize $H_{d}^{2}\otimes\mathbb{C}^{r}$
as the completion of the algebraic tensor product $\mathbb{C}[z]\otimes\mathbb{C}^{r}$.
We will follow \cite{Arv07} and write $r\mathbb{C}[z]$ and $rH_{d}^{2}$
for $\mathbb{C}[z]\otimes\mathbb{C}^{r}$ and $H_{d}^{2}\otimes\mathbb{C}^{r}$
respectively.

Since the meaning will always be clear from the context, it will be
convenient to also let $M_{z_{1}},\ldots,M_{z_{d}}$ denote the coordinate
multiplication operators on $rH_{d}^{2}$,
\[
M_{z_{i}}\left(f\otimes\xi\right)=M_{z_{i}}f\otimes\xi,\quad f\in H_{d}^{2},\ \xi\in\mathbb{C}^{r},\ 1\leq i\leq d.
\]
Note that cordinate multiplication operators on $rH_{d}^{2}$ can
also be realized as the tensor product of the coordinate multiplication
operators on $H_{d}^{2}$ with the identity operator on $\mathbb{C}^{r}$.
The $d$-tuple $(M_{z_{1}},\ldots,M_{z_{d}})$ is called the \emph{$d$-shift
of rank $r$}. 

The elements in $rH_{d}^{2}$ can be viewed as vector-valued analytic
functions on the complex unit ball, and $rH_{d}^{2}$ can also be
viewed as a Hilbert module over the algebra of polynomials $\mathbb{C}[z]$.
In this case, the module action is defined on the elementary tensors
in $rH_{d}^{2}$ by
\[
p\left(f\otimes\xi\right)=p\left(M_{z_{1}},\ldots,M_{z_{d}}\right)\left(f\otimes\xi\right),\quad p\in\mathbb{C}[z],\ f\in H_{d}^{2},\ \xi\in\mathbb{C}^{r},
\]
and extended to all of $rH_{d}^{2}$ by linearity.

\subsection{Essential normality}

Let $N$ be a submodule of $rH_{d}^{2}$. Then $N$ is invariant for
the coordinate multiplication operators $M_{z_{1}},\ldots,M_{z_{d}}$
on $rH_{d}^{2}$, so we can consider the corresponding restrictions
$S_{1},\ldots,S_{d}$ to $N$. The submodule $N$ is said to be \emph{$p$-essentially
normal} if the self-commutators
\[
S_{i}^{*}S_{j}-S_{j}S_{i}^{*},\quad1\leq i,j\leq d,
\]
belong to the Schatten $p$-class $\mathcal{L}^{p}$ for $1\leq p\leq\infty$.
If $p=\infty$, then we will say that $N$ is \emph{essentially normal}.

If we identify the quotient space $rH_{d}^{2}/N$ with the orthogonal
complement $N^{\perp},$ then we can also consider the compressions
$T_{1},\ldots,T_{d}$ of $M_{z_{1}},\ldots,M_{z_{d}}$ respectively
to $rH_{d}^{2}/N$. The quotient module $rH_{d}^{2}/N$ is similarly
said to be \emph{$p$-essentially normal} if the self-commutators
\[
T_{i}^{*}T_{j}-T_{j}T_{i}^{*},\quad1\leq i,j\leq d,
\]
belong to the Schatten class $\mathcal{L}^{p}$ for $1\leq p\leq\infty$.

For a submodule $N$, let  $P_{N}$ denote the projection onto $N$.
We will require the following result of Arveson, which is Theorem
4.3 of \cite{Arv05}.
\begin{thm}
[Arveson]\label{thm:arveson-ess-norm-equiv-conditions}Let $N$ be
a submodule of $rH_{d}^{2}$. Then for every $p>d$, the following
are equivalent:
\begin{enumerate}
\item $N$ is $p$-essentially normal,
\item $rH_{d}^{2}/N$ is $p$-essentially normal,
\item For $1\leq i\leq d$, the commutators $M_{z_{i}}P_{N}-P_{N}M_{z_{i}}$
belong to the Schatten $p$-class $\mathcal{L}^{2p}$.
\end{enumerate}
\end{thm}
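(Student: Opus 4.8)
The plan is to prove the equivalence of the three conditions by first establishing the key identity relating the self-commutators of the restriction $d$-tuple $(S_1,\dots,S_d)$ and the compression $d$-tuple $(T_1,\dots,T_d)$ to the commutators $M_{z_i}P_N - P_N M_{z_i}$. Writing $P = P_N$ and $Q = I - P$, and noting that $S_i = PM_{z_i}P$ (as an operator on $N = PH$) while $T_i = QM_{z_i}Q$, I would compute $S_i^*S_j - S_jS_i^*$ directly. Since $N$ is invariant for each $M_{z_i}$, we have $PM_{z_i}P = M_{z_i}P$, so $S_i$ acts as $M_{z_i}$ restricted to $N$ and $S_i^*$ acts as $PM_{z_i}^*$ restricted to $N$. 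Expanding $S_i^*S_j - S_jS_i^* = PM_{z_i}^*M_{z_j}P - M_{z_j}PM_{z_i}^*P$ on $N$ and using the fact that $H_d^2 \otimes \mathbb{C}^r$ is itself essentially normal (so $M_{z_i}^*M_{z_j} - M_{z_j}M_{z_i}^*$ is compact, indeed in $\mathcal{L}^p$ for $p > d$), one finds that $S_i^*S_j - S_jS_i^*$ differs from a compact operator by an expression of the form $M_{z_j}(P - PM_{z_i}^*M_{z_i}^{-1}\dots)$ — more cleanly, by terms of the form $PM_{z_j}QM_{z_i}^*P$ and its relatives. The upshot of the bookkeeping is that each self-commutator of $(S_1,\dots,S_d)$ equals, modulo $\mathcal{L}^p$, a sum of products of the basic commutators $[M_{z_i},P] = M_{z_i}P - PM_{z_i}$ (and their adjoints) with bounded operators.

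Concretely, I would use the decomposition $[M_{z_j}, P] = M_{z_j}P - PM_{z_j} = QM_{z_j}P - PM_{z_j}Q$ (using $P + Q = I$), and similarly handle the adjoint commutators. The crucial observation is that $[M_{z_i}, P]^*[M_{z_j}, P]$ is in $\mathcal{L}^p$ precisely when each factor is in $\mathcal{L}^{2p}$, by Hölder's inequality for Schatten norms: if $A, B \in \mathcal{L}^{2p}$ then $A^*B \in \mathcal{L}^p$ with $\|A^*B\|_p \le \|A\|_{2p}\|B\|_{2p}$. This is exactly why condition (3) is stated with $\mathcal{L}^{2p}$ rather than $\mathcal{L}^p$. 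Thus (3) $\Rightarrow$ (1): if every $[M_{z_i}, P] \in \mathcal{L}^{2p}$, then every self-commutator $S_i^*S_j - S_jS_i^*$, being a finite sum of products each containing at least one commutator factor times its adjoint times bounded operators, plus a term inherited from the essential normality of $rH_d^2$ itself (which lies in $\mathcal{L}^p$ since $p > d$... here we need $p < d \le \infty$? — I would follow the hypothesis $d < p \le \infty$ as in the body and note the ambient tuple is $p$-essentially normal for $p>d$), lies in $\mathcal{L}^p$. The implication (1) $\Rightarrow$ (3) runs in reverse: from $S_i^*S_j - S_jS_i^* \in \mathcal{L}^p$ one extracts, by a suitable algebraic manipulation (e.g. summing $\sum_i$ of an expression like $[M_{z_i},P]^*[M_{z_i},P]$ against the self-commutators), that $\sum_i [M_{z_i},P]^*[M_{z_i},P] \in \mathcal{L}^p$, and since this is a sum of positive operators, each summand is in $\mathcal{L}^p$, hence each $[M_{z_i},P] \in \mathcal{L}^{2p}$. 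The equivalence (2) $\Leftrightarrow$ (3) is entirely symmetric, replacing $P$ by $Q = I - P$ and noting $[M_{z_i}, Q] = -[M_{z_i}, P]$, so conditions (1) and (2) are equivalent via their common equivalence with (3).

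The main obstacle I anticipate is the careful commutator algebra in relating $S_i^*S_j - S_jS_i^*$ to the basic commutators $[M_{z_i}, P]$ — specifically, isolating the term that comes from the ambient essential normality of $rH_d^2$ and verifying it indeed falls in $\mathcal{L}^p$ for the relevant range of $p$, versus the terms that are genuinely controlled by condition (3). One must be attentive to the distinction between operators on $N$ and operators on all of $rH_d^2$, and to the fact that $S_i^* \ne PM_{z_i}^*|_N$ in general unless one is careful about which space the adjoint is taken in. A clean way around this is to work throughout with operators on $rH_d^2$: identify $S_i$ with $PM_{z_i} = M_{z_i}P$ (these agree on $PH$ by invariance) and compute the compression of the self-commutator, then reduce to the commutator $[M_{z_i}, P]$ by writing $M_{z_i}P = PM_{z_i}P + QM_{z_i}P$ and absorbing the $QM_{z_i}P$ pieces as commutator terms. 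Since Arveson's original proof (Theorem 4.3 of \cite{Arv05}) is available, I would present the argument as a self-contained recollection of that computation, emphasizing the Hölder-inequality step as the reason for the $\mathcal{L}^{2p}$ appearing in (3), rather than reproving every line.
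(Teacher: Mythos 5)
The paper does not prove this statement: it is quoted directly from Arveson (Theorem 4.3 of \cite{Arv05}) and used as a black box, so there is no in-paper argument to compare against. Your sketch is a correct reconstruction of Arveson's original proof, and all the essential ingredients are present: the reduction of the self-commutators of the restricted (resp.\ compressed) tuple to products of the basic commutators $\left[M_{z_{i}},P_{N}\right]$ plus the compression of the ambient self-commutators, the H\"older inequality $\left\Vert A^{*}B\right\Vert _{p}\leq\left\Vert A\right\Vert _{2p}\left\Vert B\right\Vert _{2p}$ explaining the exponent $2p$ in condition (3), the use of the $p$-essential normality of $rH_{d}^{2}$ itself for $p>d$, and the positivity argument for the converse. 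You correctly note that the printed hypothesis ``$p<d\leq\infty$'' is a typo for $d<p\leq\infty$. The only place your write-up is vaguer than it needs to be is the ``bookkeeping'' step; it collapses to a one-line identity once one observes that invariance of $N$ gives $\left(I-P_{N}\right)M_{z_{j}}P_{N}=0$, hence $\left[M_{z_{j}},P_{N}\right]=-P_{N}M_{z_{j}}\left(I-P_{N}\right)$, and therefore, writing $Q=I-P_{N}$,
\[
S_{i}^{*}S_{j}-S_{j}S_{i}^{*}=P_{N}\left(M_{z_{i}}^{*}M_{z_{j}}-M_{z_{j}}M_{z_{i}}^{*}\right)P_{N}+\left[M_{z_{j}},P_{N}\right]\left[M_{z_{i}},P_{N}\right]^{*}
\]
as operators on $N$; summing the diagonal cases $i=j$ expresses $\sum_{j}\left[M_{z_{j}},P_{N}\right]\left[M_{z_{j}},P_{N}\right]^{*}$ as a difference of two $\mathcal{L}^{p}$ operators, and domination of positive operators in $\mathcal{L}^{p}$ gives each summand, hence (1) $\Rightarrow$ (3). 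The quotient case is identical with $Q$ in place of $P_{N}$. So your proposal is sound; it simply fills in a proof the paper delegates to the literature.
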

In our work, we will mostly use condition (3) of Theorem \ref{thm:arveson-ess-norm-equiv-conditions}.

\section{\label{sec:ess-decomp}Essential decomposability}

\subsection{Essential decomposability}

In this section, we will consider a notion of decomposability for
a submodule that implies essential normality. Let $N_{1},\ldots,N_{n}$
be submodules of $rH_{d}^{2}$. Then we will write $N_{1}+\ldots+N_{n}$
for the (not necessarily closed) algebraic sum
\[
N_{1}+\ldots+N_{n}=\left\{ x_{1}+\ldots+x_{n}\mid x_{i}\in N_{i}\ \mbox{for}\ 1\leq i\leq n\right\} .
\]

\begin{defn}
Let $N$ be a submodule of $rH_{d}^{2}$. Then $N$ is said to be
\emph{$p$-essentially decomposable} if there are $p$-essentially
normal $rH_{d}^{2}$ submodules $N_{1},\ldots,N_{n}$ such that
\[
N=N_{1}+\ldots+N_{n}.
\]
If $p=\infty$, then $N$ is said to be \emph{essentially decomposable}.\end{defn}
\begin{rem}
Note that if $N$ is a $p$-essentially normal submodule of $rH_{d}^{2}$,
then it is trivially $p$-essentially decomposable.
\end{rem}

\begin{thm}
\label{thm:ess-decomp-implies-ess-norm}Every $p$-essentially decomposable
submodule of $rH_{d}^{2}$ is $p$-essentially normal for $p>d$.\end{thm}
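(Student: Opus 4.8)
The plan is to reduce everything to Arveson's commutator criterion, condition~(3) of Theorem~\ref{thm:arveson-ess-norm-equiv-conditions}: if $N = N_1 + \cdots + N_n$ with each $N_i$ a $p$-essentially normal submodule, then $M_{z_k}P_{N_i} - P_{N_i}M_{z_k} \in \mathcal{L}^{2p}$ for all $i$ and all $1\le k\le d$, and I want to deduce $M_{z_k}P_N - P_N M_{z_k}\in\mathcal{L}^{2p}$, so that the same theorem gives that $N$ is $p$-essentially normal. The key device is to realize $P_N$ as a function of the single positive operator
\[
Q = P_{N_1} + \cdots + P_{N_n},
\]
whose commutator with $M_{z_k}$ is visibly in $\mathcal{L}^{2p}$, since $M_{z_k}Q - QM_{z_k} = \sum_i (M_{z_k}P_{N_i}-P_{N_i}M_{z_k})$, and then to transfer Schatten-class membership from $Q$ to $P_N$ in a quantitative way.

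First I would pin down the structure of $Q$. Since $Q\ge 0$, and by Lemma~\ref{lem:vec-sum-closed-iff-proj-sum-closed} together with the identity $\operatorname{ran}(Q) = N_1 + \cdots + N_n$ obtained in its proof, we have $\operatorname{ran}(Q) = N$, which is closed because $N$ is a submodule; meanwhile $\ker Q = \bigcap_i \ker P_{N_i} = \bigcap_i N_i^\perp = N^\perp$. Decomposing $rH_d^2 = N^\perp \oplus N$, this forces $Q = 0 \oplus Q_0$ with $Q_0$ positive, injective and surjective on $N$, hence boundedly invertible there. Therefore $\sigma(Q) \subseteq \{0\}\cup[a,n]$ for some $a>0$, so that $0$ is an isolated point of $\sigma(Q)$ (the cases $N=\{0\}$ and $N = rH_d^2$ being trivial or corresponding to $0\notin\sigma(Q)$).

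Next I would invoke the holomorphic functional calculus. Choosing a positively oriented contour $\Gamma \subset \mathbb{C}\setminus\sigma(Q)$ that encircles $[a,n]$ but not the origin, the decomposition $Q = 0 \oplus Q_0$ shows that the Riesz projection $\frac{1}{2\pi i}\oint_\Gamma (zI-Q)^{-1}\,dz$ equals $0 \oplus I_N = P_N$. Using the resolvent commutator identity $M_{z_k}(zI-Q)^{-1} - (zI-Q)^{-1}M_{z_k} = (zI-Q)^{-1}(M_{z_k}Q - QM_{z_k})(zI-Q)^{-1}$ and the fact that $X\mapsto M_{z_k}X - XM_{z_k}$ is bounded on $B(rH_d^2)$ and hence passes through the contour integral, I get
\[
M_{z_k}P_N - P_N M_{z_k} = \frac{1}{2\pi i}\oint_\Gamma (zI-Q)^{-1}(M_{z_k}Q - QM_{z_k})(zI-Q)^{-1}\,dz .
\]
Since $M_{z_k}Q - QM_{z_k}\in\mathcal{L}^{2p}$, the map $z\mapsto(zI-Q)^{-1}$ is norm-continuous on the compact set $\Gamma$, and $\mathcal{L}^{2p}$ is an ideal with $\|AXB\|_{2p}\le\|A\|\,\|X\|_{2p}\,\|B\|$, the integrand is a continuous $\mathcal{L}^{2p}$-valued function on $\Gamma$; its contour integral therefore converges in $\mathcal{L}^{2p}$, and this limit agrees with the operator-norm integral because $\|\cdot\|\le\|\cdot\|_{2p}$. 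Hence $M_{z_k}P_N - P_N M_{z_k}\in\mathcal{L}^{2p}$ for every $k$, and Theorem~\ref{thm:arveson-ess-norm-equiv-conditions} finishes the argument.

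The step I expect to be the main obstacle — and the reason the holomorphic functional calculus is the right tool rather than, say, polynomial approximation — is the passage from $Q$ to $P_N$. One can certainly write $P_N = g(Q)$ for a continuous $g$ with $g(Q)Q = P_N$, but approximating $g$ uniformly by polynomials only yields operator-norm convergence of the corresponding commutators, and $\mathcal{L}^{2p}$ is not closed in the operator norm; one really needs a representation of $P_N$ in terms of $Q$ in which the transfer of $\mathcal{L}^{2p}$-membership is uniform and quantitative. The Riesz-integral representation provides exactly this, and it is available precisely because $0$ is isolated in $\sigma(Q)$ — which is where the closedness of the algebraic sum $N_1 + \cdots + N_n$, guaranteed here through Lemma~\ref{lem:vec-sum-closed-iff-proj-sum-closed}, is indispensable.
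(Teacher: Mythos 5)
Your proposal is correct and follows essentially the same route as the paper: form $Q=P_{N_1}+\cdots+P_{N_n}$, use Lemma \ref{lem:vec-sum-closed-iff-proj-sum-closed} to see that $Q$ has closed range $N$, represent $P_N$ as a Riesz contour integral of the resolvent of $Q$, and push the $\mathcal{L}^{2p}$ commutator through via the resolvent identity and Theorem \ref{thm:arveson-ess-norm-equiv-conditions} (the paper attributes this step to Arveson's Theorem 4.4 in \cite{Arv07}). Your extra remarks on why $0$ is isolated in $\sigma(Q)$ and why polynomial approximation would not suffice are accurate elaborations of the same argument.
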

\begin{proof}
Let $N$ be a $p$-essentially decomposable submodule of $rH_{d}^{2}$
with decomposition $N=N_{1}+\ldots+N_{n}$, where $N_{1},\ldots,N_{n}$
are $p$-essentially normal $rH_{d}^{2}$ submodules. Let $M=N_{1}\oplus\ldots\oplus N_{n}$.
Then $M$ is a submodule of $nrH_{d}^{2}=(rH_{d}^{2})^{n}$, and the
$p$-essential normality of $N_{1},\ldots,N_{n}$ implies the $p$-essential
normality of $M$. Define $L:(rH_{d}^{2})^{n}\to rH_{d}^{2}$ by
\[
L(x_{1},\ldots,x_{n})=x_{1}+\ldots+x_{n},\quad(x_{1},\ldots,x_{n})\in(rH_{d}^{2})^{n},
\]
Then $L(M_{z_{i}}^{(n)})^{*}=M_{z_{i}}^{*}L$ for each $i$, where
$M_{z_{i}}^{(n)}$ denotes the coordinate multiplication operator
on $(rH_{d}^{2})^{n}$. In particular, the restriction of $L$ to
$M$ is a $2p$-morphism from $(rH_{d}^{2})^{n}$ to $rH_{d}^{2}$,
in the sense of Definition 4.3 of \cite{Arv07}. Furthermore, $L(M)=N$
is closed. Since $M$ is $p$-essentially normal, it follows from
Theorem 4.4 of \cite{Arv07} that $N$ is also $p$-essentially normal.
\end{proof}

\begin{rem}
A direct proof of Theorem \ref{thm:ess-decomp-implies-ess-norm} that
avoids the notion of a $p$-morphism can be given by emulating the
first part of the proof of Theorem 4.4 of \cite{Arv07}.
\end{rem}

We will also require the following lemma, which can be proved by a
simple modification of the proof of Corollary 3 of \cite{FW71}.
\begin{lem}
\label{lem:vec-sum-closed-iff-proj-sum-closed}Let $N_{1},\ldots,N_{n}$
be submodules of $rH_{d}^{2}$. Then the algebraic sum $N_{1}+\ldots+N_{n}$
is closed if and only if the range of the operator $P_{N_{1}}+\ldots+P_{N_{n}}$
is closed.
\end{lem}

\begin{rem}
A classical result of Friedrichs \cite{Fri37} implies that the algebraic
sum of two subspaces $N_{1}$ and $N_{2}$ is closed if and only if
the (Friedrichs) angle between them is positive. Recently in \cite{BGM10},
Badea, Grivaux and Muller established an analogue of Friedrichs' result
for an arbitrary number of subspaces $N_{1},\ldots,N_{n}$, by considering
a generalized notion of angle. Although we do not require their results
in the present paper, we believe that similar ideas may eventually
prove useful in resolving Arveson's conjecture.
\end{rem}

\subsection{\label{sub:stable-division}Stable division}

The stable division property was introduced by Shalit in \cite{Sha11},
in connection with Arveson's conjecture. However, the notion of stable
division is notion is also of independent interest, since it concerns
the numerical stability of multivariable polynomial division.
\begin{defn}
An $rH_{d}^{2}$ submodule $N$ is said to have the \emph{stable division
property} if there is a family of homogeneous polynomials $\{p_{1},\ldots,p_{n}\}$
generating $N$ and a constant $C\geq0$ such that, for any polynomial
$p$ in $N$, there are polynomials $q_{1},\ldots,q_{n}$ in $\mathbb{C}[z]$
satisfying $p=q_{1}p_{1}+\ldots+q_{n}p_{n}$ and
\[
\|q_{1}p_{1}\|+\ldots+\|q_{n}p_{n}\|\leq C\|p\|.
\]
The family $\{p_{1},\ldots,p_{n}\}$ is said to be a \emph{stable
generating set} for $N$.
\end{defn}

The next result was discovered at the suggestion of  Shalit. It establishes
a connection between the ideas in this paper and his work on the stable
division property in \cite{Sha11}.
\begin{thm}
\label{thm:stable-div-equiv-friedrichs-num}Let $p_{1},\ldots,p_{n}$
be homogeneous polynomials in $r\mathbb{C}[z]$, and let $N_{1},\ldots,N_{n}$
denote the corresponding $rH_{d}^{2}$ submodules they generate. Then
$\{p_{1},\ldots,p_{n}\}$ is a stable generating set if and only if
the algebraic sum $N_{1}+\ldots+N_{n}$ is closed.\end{thm}
\begin{proof}
Let $N=\overline{N_{1}+\ldots+N_{n}}$. Suppose first that $N_{1}+\ldots+N_{n}$
is closed. Then the operator $T:N_{1}\oplus\ldots\oplus N_{n}\to N$,
defined by
\[
T(x_{1},\ldots,x_{n})=x_{1}+\ldots+x_{n},\quad(x_{1},\ldots,x_{n})\in N_{1}\oplus\ldots\oplus N_{n},
\]
has closed range $N$. Hence there is a constant $C\geq0$ such that
for any $f$ in $N$, there are $f_{i}$ in $N_{i}$ satisfying $f=f_{1}+\ldots+f_{n}$
and
\[
\|f_{1}\|+\ldots+\|f_{n}\|\leq C\|f\|.
\]
If $f=p$ is a homogeneous polynomial, then we can replace each $f_{i}$
with the homogeneous polynomial $q_{i}'$ obtained by projecting $f_{i}$
onto the homogeneous component of $rH_{d}^{2}$ containing $p$. Since
$N_{i}$ is generated by the homogeneous polynomial $p_{i}$, it is
left invariant by this projection. Hence $q_{i}'$ still belongs to
$N_{i}$, and we can write $q_{i}'=q_{i}p_{i}$ for some polynomial
$q_{i}$ in $\mathbb{C}[z]$. Since an arbitrary polynomial can be
written as an orthogonal sum of homogeneous polynomials of different
degrees, it follows that $\{p_{1},\ldots,p_{n}\}$ is a stable generating
set for $N$.

Conversely, suppose $\{p_{1},\ldots,p_{n}\}$ is a stable generating
set for $N$. Fix $f$ in $N$, and let $(s_{k})_{k=1}^{\infty}$
be a sequence of polynomials in $N$ converging to $f$. By the stable
division property, there is a constant $C\geq0$ such that, for each
$k\geq1$, there are polynomials $q_{k,i}$ in $\mathbb{C}[z]$ satisfying
$s_{k}=q_{k,1}p_{1}+\ldots+q_{k,n}p_{n}$ and
\[
\|q_{k,1}p_{1}\|+\ldots+\|q_{k,n}p_{n}\|\leq C\|s_{k}\|.
\]
For each $i$, the sequence $(q_{k,i}p_{i})_{k=1}^{\infty}$ is clearly
bounded, and by passing to a subsequence we can assume that it is
weakly convergent to some $f_{i}$ in $N_{i}$. Then for all $g$
in $rH_{d}^{2}$,
\[
\langle f-(f_{1}+\ldots+f_{n}),g\rangle=\lim_{k\to\infty}\langle s_{k}-(q_{k,1}p_{1}+\ldots+q_{k,n}p_{n}),g\rangle=0,
\]
and it follows that $f=f_{1}+\ldots+f_{k}$. Hence $f$ belongs to
$N_{1}+\ldots+N_{n}$. Since $f$ was arbitrary, we conclude that
$N=N_{1}+\ldots+N_{n}$.
\end{proof}

Shalit proved in Theorem 2.3 of \cite{Sha11} that many families of
homogeneous submodules of $H_{2}^{2}$ have the stable division property.
However, consideration of Shalit's proof reveals that it actually
implies the following stronger result. (For background material on
Groebner bases, see, for example \cite{CLS92}.)
\begin{thm}
[Shalit]\label{thm:shalit-grobner}Let $p_{1},\ldots,p_{n}$ be homogeneous
polynomials in $\mathbb{C}[z]$ such that $\{p_{1},\ldots,p_{n}\}$
is a Groebner basis. Suppose there is a subset $Z$ of $\{z_{1},\ldots,z_{d}\}$,
of size at most $2$, such that $p_{1},\ldots,p_{n}\in\mathbb{C}[Z]$.
Then the family $\{p_{1},\ldots,p_{n}\}$ is a stable generating set.
\end{thm}

Applying Theorem \ref{thm:stable-div-equiv-friedrichs-num} to Theorem
\ref{thm:shalit-grobner}, we obtain the following result.
\begin{prop}
\label{prop:grobner-basis-implies-friedrichs-num}Let $N_{1},\ldots,N_{n}$
be submodules of $H_{d}^{2}$ generated by homogeneous polynomials
$p_{1},\ldots,p_{n}$ respectively in $\mathbb{C}[z]$. Suppose that
$\{p_{1},\ldots,p_{n}\}$ is a Groebner basis, and suppose that there
is a subset $Z$ of $\{z_{1},\ldots,z_{d}\}$, of size at most $2$,
such that $p_{1},\ldots,p_{n}\in\mathbb{C}[Z]$. Then the algebraic
sum $N_{1}+\ldots+N_{n}$ is closed.
\end{prop}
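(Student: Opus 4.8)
The plan is to simply chain together the two results immediately preceding this Proposition, with no additional work required. Under the stated hypotheses — that $p_{1},\ldots,p_{n}$ are homogeneous, that $\left\{p_{1},\ldots,p_{n}\right\}$ is a Groebner basis, and that there is a subset $Z$ of $\left\{z_{1},\ldots,z_{d}\right\}$ of size at most $2$ with $p_{1},\ldots,p_{n}\in\mathbb{C}\left[Z\right]$ — we are precisely in the situation covered by Theorem \ref{thm:shalit-grobner}. Applying that theorem directly yields that the family $\left\{N_{1},\ldots,N_{n}\right\}$ has the stable division property.

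It then remains only to convert this into the desired statement about the cosine. This is exactly the content of Theorem \ref{thm:stable-div-equiv-friedrichs-num}, which asserts that a family $\left\{N_{1},\ldots,N_{n}\right\}$ of submodules of $rH_{d}^{2}$ has the stable division property if and only if $c\left(N_{1},\ldots,N_{n}\right)<1$. Invoking the forward implication of this equivalence for the family produced in the first step gives $c\left(N_{1},\ldots,N_{n}\right)<1$, which completes the argument.

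Since both ingredients are already established, there is no genuine obstacle here. The only points that merit a moment's care are purely bookkeeping: one should check that the hypotheses of Theorem \ref{thm:shalit-grobner} are met verbatim — in particular that the generators $p_{1},\ldots,p_{n}$ are homogeneous and that $\left\{p_{1},\ldots,p_{n}\right\}$ is indeed a Groebner basis, both of which are assumed — and one should note that Theorem \ref{thm:stable-div-equiv-friedrichs-num} is stated for an arbitrary family of submodules of $rH_{d}^{2}$, so it applies in particular with $r=1$ to the submodules $N_{1},\ldots,N_{n}\subseteq H_{d}^{2}$ considered here.
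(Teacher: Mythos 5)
Your proposal is correct and is exactly the argument the paper uses: the paper obtains this Proposition by "applying Theorem \ref{thm:stable-div-equiv-friedrichs-num} to Theorem \ref{thm:shalit-grobner}," which is precisely your chaining of Shalit's Groebner-basis stable division result with the equivalence between stable division and a positive angle.
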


Applying Theorem \ref{thm:stable-div-equiv-friedrichs-num} to an
example from \cite{Sha11} provides an example of two submodules of
$H_{d}^{2}$ with non-closed algebraic sum, and demonstrates that
there is no straightforward generalization of Proposition \ref{prop:grobner-basis-implies-friedrichs-num}
to polynomials in three or more variables.
\begin{example}
\label{ex:cosine-is-one}Let $N_{1}$ and $N_{2}$ denote the $H_{3}^{2}$
submodules generated by the polynomials $p_{1}$ and $p_{2}$ respectively,
where
\begin{align*}
p_{1}\left(z_{1},z_{2},z_{3}\right) & =z_{1}^{2}+z_{2}z_{3}\\
p_{2}\left(z_{1},z_{2},z_{3}\right) & =z_{2}^{2},
\end{align*}
and let $N=\overline{N_{1}+N_{2}}$. In Example 2.6 of \cite{Sha11}
it was shown that the family $\{p_{1},p_{2}\}$ generates $N$, but
is not a stable generating set. Hence by Theorem \ref{thm:stable-div-equiv-friedrichs-num},
the algebraic sum $N_{1}+N_{2}$ is not closed. Since $\{p_{1},p_{2}\}$
is a Groebner basis with respect to the lexicographical monomial ordering,
this also shows that Proposition 3.8 does not generalize to polynomials
in three or more variables.

However, $N$ is essentially decomposable. Indeed, let $K_{1},K_{2},K_{3},K_{4}$
denote the submodules of $H_{3}^{2}$ generated by the polynomials
$q_{1},q_{2},q_{3},q_{4}$ respectively, where
\begin{align*}
q_{1}\left(z_{1},z_{2},z_{3}\right) & =z_{1}^{4}\\
q_{2}\left(z_{1},z_{2},z_{3}\right) & =z_{1}^{2}z_{2}\\
q_{3}\left(z_{1},z_{2},z_{3}\right) & =z_{1}^{2}+z_{2}z_{3}\\
q_{4}\left(z_{1},z_{2},z_{3}\right) & =z_{2}^{2}.
\end{align*}
Then the family $\{q_{1},q_{2},q_{3},q_{4}\}$ is a stable generating
set for $N$. Hence by Theorem \ref{thm:stable-div-equiv-friedrichs-num},
$N=K_{1}+K_{2}+K_{3}+K_{4}$.
\end{example}

We also briefly mention Eschmeier's recent paper \cite{Esc11}, which
introduces a related property of a family of polynomials,  in connection
with Arveson's essential normality conjecture. In fact, as pointed
out in \cite{Sha11}, Eschmeier's property is implied by the stable
division property.

\section{\label{sec:perpendicular-submodules}Perpendicular submodules}

\subsection{Perpendicularity}

In this section we consider a property of a family of submodules of
$rH_{d}^{2}$ that implies the algebraic sum of the submodules is
closed.
\begin{defn}
\label{def:perpendicularity}Let $N_{1},\ldots,N_{n}$ be submodules
of $rH_{d}^{2}$. The family $\{N_{1},\ldots,N_{n}\}$ is \emph{perpendicular}
if
\begin{equation}
N_{i}\cap(N_{i}\cap N_{j})^{\perp}\perp N_{j}\cap(N_{i}\cap N_{j})^{\perp},\quad1\leq i<j\leq n.\label{eq:prop-perpendicularity-condition}
\end{equation}

\end{defn}

\begin{prop}
\label{prop:perpendicular-iff-projections-commuting}Let $N_{1},\ldots,N_{n}$
be submodules of $rH_{d}^{2}$. Then the family $\{N_{1},\ldots,N_{n}\}$
is perpendicular if and only if the projections $P_{N_{1}},\ldots,P_{N_{n}}$
commute.\end{prop}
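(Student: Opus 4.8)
The plan is to reduce the statement to the case of two submodules and then invoke the classical characterization of commuting projections in terms of the orthogonal decompositions of their ranges. Observe first that both conditions are conjunctions over the pairs $1 \le i < j \le n$: the perpendicularity condition (\ref{eq:prop-perpendicularity-condition}) is imposed pair by pair, and the projections $P_{N_1}, \ldots, P_{N_n}$ commute precisely when $P_{N_i}$ and $P_{N_j}$ commute for every $i < j$. Hence it suffices to fix a pair $N_1, N_2$ and show that $N_1 \cap (N_1 \cap N_2)^\perp \perp N_2 \cap (N_1 \cap N_2)^\perp$ if and only if $P_{N_1} P_{N_2} = P_{N_2} P_{N_1}$. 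Note that although the subspaces appearing here need not all be submodules, this is irrelevant; the statement is purely a fact about the closed subspaces $N_1, \ldots, N_n$ and their projections.

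Set $L = N_1 \cap N_2$ and $M_i = N_i \cap L^\perp$ for $i = 1, 2$. Since $L \subseteq N_i$, each $N_i$ splits as an orthogonal direct sum $N_i = L \oplus M_i$, so that $P_{N_i} = P_L + P_{M_i}$; moreover $M_1 \cap M_2 = N_1 \cap N_2 \cap L^\perp = \{0\}$. With this notation the perpendicularity condition for the pair $\{N_1, N_2\}$ is exactly $M_1 \perp M_2$, and I would then argue the two implications separately.

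For the forward direction, suppose $M_1 \perp M_2$. Since $M_1, M_2 \subseteq L^\perp$ we have $P_L P_{M_2} = P_{M_1} P_L = 0$, and $P_{M_1} P_{M_2} = 0$ by hypothesis, so expanding gives
\[
P_{N_1} P_{N_2} = (P_L + P_{M_1})(P_L + P_{M_2}) = P_L + P_L P_{M_2} + P_{M_1} P_L + P_{M_1} P_{M_2} = P_L.
\]
The symmetric computation yields $P_{N_2} P_{N_1} = P_L$ as well, so $P_{N_1}$ and $P_{N_2}$ commute. Conversely, suppose $P_{N_1} P_{N_2} = P_{N_2} P_{N_1}$. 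Then $P_{N_1} P_{N_2}$ is an orthogonal projection, and by the standard fact that the product of two commuting orthogonal projections is the projection onto the intersection of their ranges, $P_{N_1} P_{N_2} = P_L$. Now let $x \in M_1$; then $P_{N_1} x = x$, hence $P_{N_2} x = P_{N_2} P_{N_1} x = P_L x = 0$, and decomposing $P_{N_2} x = P_L x + P_{M_2} x$ forces $P_{M_2} x = 0$, i.e. $x \perp M_2$. Since $x \in M_1$ was arbitrary, $M_1 \perp M_2$, which is the perpendicularity condition for the pair.

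There is no serious obstacle here; the only points requiring a little care are the verification that $N_i = L \oplus M_i$ is a genuine orthogonal decomposition — which follows from $L \subseteq N_i$ — so that $P_{N_i} = P_L + P_{M_i}$, and the invocation of the elementary lemma identifying $P_{N_1} P_{N_2}$ with $P_{N_1 \cap N_2}$ when the two projections commute.
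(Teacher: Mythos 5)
Your proposal is correct and follows essentially the same route as the paper: both reduce the statement to the pairwise case and rest on the classical fact that two orthogonal projections commute if and only if the corresponding subspaces, after removing their intersection, are orthogonal. The only difference is that the paper quotes this two-projection fact without proof, whereas you supply a complete (and correct) verification of it via the decompositions $N_i = (N_1\cap N_2)\oplus\bigl(N_i\cap(N_1\cap N_2)^{\perp}\bigr)$.
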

\begin{proof}
We recall the simple fact that if $P$ and $Q$ are projections with
range $\operatorname{ran}\left(P\right)$ and $\operatorname{ran}\left(Q\right)$
respectively, then $P$ and $Q$ commute if and only if 
\[
\operatorname{ran}\left(P\right)\cap\left(\operatorname{ran}\left(P\right)\cap\operatorname{ran}\left(Q\right)\right)^{\perp}\perp\operatorname{ran}\left(Q\right)\cap\left(\operatorname{ran}\left(P\right)\cap\operatorname{ran}\left(Q\right)\right)^{\perp}.
\]
Therefore, the result follows immediately from Definition \ref{def:perpendicularity}.
\end{proof}

Ken Davidson pointed out that the following lemma is a well known
result from the theory of CSL (commutative subspace lattice) algebras.
\begin{lem}
\label{lem:perp-family-lattice-perp}Let $\{N_{1},\ldots,N_{n}\}$
be a perpendicular family of submodules of $rH_{d}^{2}$, and let
$K_{1},\ldots,K_{m}$ be subspaces contained in the subspace lattice
generated by $N_{1},\ldots,N_{n}$. Then $\{K_{1},\ldots,K_{m}\}$
is also a perpendicular family of submodules of $rH_{d}^{2}$.\end{lem}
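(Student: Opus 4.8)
The plan is to reduce the statement to Proposition~\ref{prop:perpendicular-iff-projections-commuting}, which says that a family of submodules is perpendicular precisely when the corresponding projections commute. Since $\{N_{1},\ldots,N_{n}\}$ is perpendicular, that proposition gives that the projections $P_{N_{1}},\ldots,P_{N_{n}}$ commute pairwise, so they generate a commutative $\ast$-algebra $\mathcal{A}$ of operators on $rH_{d}^{2}$. The core of the proof is then to verify that the projection onto every subspace in the lattice generated by $N_{1},\ldots,N_{n}$ belongs to $\mathcal{A}$; granting this, $P_{K_{1}},\ldots,P_{K_{m}}$ all lie in the commutative algebra $\mathcal{A}$, hence commute pairwise, and a second application of Proposition~\ref{prop:perpendicular-iff-projections-commuting} yields that $\{K_{1},\ldots,K_{m}\}$ is perpendicular.

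To establish the core claim I would let $\mathcal{L}$ denote the collection of all closed subspaces $M$ of $rH_{d}^{2}$ with $P_{M}\in\mathcal{A}$. Clearly $N_{1},\ldots,N_{n}\in\mathcal{L}$. If $M,M'\in\mathcal{L}$, then $P_{M}$ and $P_{M'}$ commute (as $\mathcal{A}$ is commutative), so the standard identities for commuting projections give $P_{M\cap M'}=P_{M}P_{M'}\in\mathcal{A}$ and $P_{M\vee M'}=P_{M}+P_{M'}-P_{M}P_{M'}\in\mathcal{A}$; hence $M\cap M'$ and $M\vee M'$ again lie in $\mathcal{L}$. Thus $\mathcal{L}$ is a lattice of subspaces containing $N_{1},\ldots,N_{n}$, and therefore it contains the lattice they generate; in particular $K_{1},\ldots,K_{m}\in\mathcal{L}$, which is exactly the core claim. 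Along the way one should also observe that every subspace built from the submodules $N_{i}$ by the operations $\cap$ and $\vee$ is again invariant for $M_{z_{1}},\ldots,M_{z_{d}}$, so that it is legitimate to regard $K_{1},\ldots,K_{m}$ as submodules.

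I do not anticipate a real obstacle. The one point to be careful about is that the lattice generated by $N_{1},\ldots,N_{n}$ is closed only under meet and join, not under orthogonal complementation, so that the two projection identities above are precisely the tools required --- there is no need for a formula expressing $P_{M^{\perp}}$ in terms of $\mathcal{A}$, and indeed the orthogonal complement of a lattice element need not even be a submodule. With that caveat the verification is entirely routine.
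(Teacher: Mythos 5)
Your proof is correct and follows essentially the same route as the paper's: reduce to commutativity of projections via Proposition~\ref{prop:perpendicular-iff-projections-commuting}, then show the projection onto each lattice element is a polynomial in the commuting projections $P_{N_{1}},\ldots,P_{N_{n}}$ (the paper uses the explicit inclusion--exclusion formula, you use the equivalent binary identities $P_{M\cap M'}=P_{M}P_{M'}$ and $P_{M\vee M'}=P_{M}+P_{M'}-P_{M}P_{M'}$ organized as an induction over the generated lattice). Your caveats about module invariance and the absence of complementation in the lattice are also consistent with the paper's treatment.
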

\begin{proof}
The projections $P_{K_{1}},\ldots,P_{K_{n}}$ are contained in the
von Neumann algebra generated by the projections $P_{N_{1}},\ldots,P_{N_{n}}$,
and by Proposition \ref{prop:perpendicular-iff-projections-commuting},
this von Neumann algebra is commutative. In particular, the projections
$P_{K_{1}},\ldots,P_{K_{n}}$ commute, and another application of
Proposition \ref{prop:perpendicular-iff-projections-commuting} implies
that the family $\{K_{1},\ldots,K_{m}\}$ is perpendicular.
\end{proof}

\subsection{\label{sub:criteria-for-perpendicularity}Criteria for perpendicularity}

In this section, we will consider criteria for a family of submodules
to be perpendicular.
\begin{lem}
\label{lem:generators-comm-decomp}Let $N_{1},\ldots,N_{n}$ be submodules
of $H_{d}^{2}$, and for $1\leq i\leq n$, let $p_{i1},\ldots,p_{im_{i}}$
be polynomials that generate $N_{i}$. If the operators 
\[
M_{p_{i1}}M_{p_{i1}}^{*}+\ldots+M_{p_{im_{i}}}M_{p_{im_{i}}}^{*},\quad1\leq i\leq n.
\]
commute, then the family $\{N_{1},\ldots,N_{n}\}$ is perpendicular.\end{lem}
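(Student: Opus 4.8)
The plan is to deduce from the hypothesis that the projections $P_{N_1},\ldots,P_{N_n}$ onto $N_1,\ldots,N_n$ commute, and then conclude by Proposition~\ref{prop:perpendicular-iff-projections-commuting}. For $1\le i\le n$, write $A_i=M_{p_{i1}}M_{p_{i1}}^{*}+\ldots+M_{p_{im_i}}M_{p_{im_i}}^{*}$, a bounded positive operator on $H_{d}^{2}$. The first step is to identify $N_i$ with $\overline{\operatorname{ran}(A_i)}$. Since $\mathcal{P}_d$ is dense in $H_{d}^{2}$ and each $M_{p_{ik}}$ is bounded, $N_i$ is the closure of the range of the row operator $R_i\colon H_{d}^{2}\oplus\ldots\oplus H_{d}^{2}\to H_{d}^{2}$ given by $R_i(f_1,\ldots,f_{m_i})=M_{p_{i1}}f_1+\ldots+M_{p_{im_i}}f_{m_i}$; and since $R_iR_i^{*}=A_i$, Douglas' Lemma (as used in the proof of Lemma~\ref{lem:vec-sum-closed-iff-proj-sum-closed}) gives $\operatorname{ran}(R_i)=\operatorname{ran}\bigl((R_iR_i^{*})^{1/2}\bigr)=\operatorname{ran}(A_i^{1/2})$, whose closure equals $\overline{\operatorname{ran}(A_i)}$. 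Hence $P_{N_i}$ is the orthogonal projection onto $\overline{\operatorname{ran}(A_i)}=(\ker A_i)^{\perp}$.

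Next I would use spectral theory to place $P_{N_i}$ inside the von Neumann algebra generated by $A_i$. Because $A_i$ is self-adjoint, the projection onto $(\ker A_i)^{\perp}$ is the spectral projection $I-E_{A_i}(\{0\})$, so $P_{N_i}\in W^{*}(A_i)$; concretely, $P_{N_i}$ is the strong limit of $A_i(A_i+t)^{-1}$ as $t\to 0^{+}$, and each $A_i(A_i+t)^{-1}$ is a norm limit of polynomials in $A_i$, so $P_{N_i}$ commutes with every bounded operator that commutes with $A_i$. By hypothesis the self-adjoint operators $A_1,\ldots,A_n$ commute pairwise, so they generate a \emph{commutative} von Neumann algebra $\mathcal{M}$, and $P_{N_i}\in W^{*}(A_i)\subseteq\mathcal{M}$ for each $i$. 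Since $\mathcal{M}$ is commutative, the projections $P_{N_1},\ldots,P_{N_n}$ commute with one another.

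Finally, Proposition~\ref{prop:perpendicular-iff-projections-commuting} shows that the commutativity of $P_{N_1},\ldots,P_{N_n}$ is equivalent to the perpendicularity of $\{N_1,\ldots,N_n\}$, which completes the argument. I expect the only delicate point to be the bookkeeping in the first step: the identification $N_i=\overline{\operatorname{ran}(A_i)}$, together with the fact that the associated projection lies in $W^{*}(A_i)$. Both are standard facts about positive operators, and once they are in place the assumed commutativity of the $A_i$ does all the work, with no further obstacle.
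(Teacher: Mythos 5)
Your proposal is correct and follows essentially the same route as the paper: identify $P_{N_i}$ as the range projection of the positive operator $A_i$, note that a range projection lies in the von Neumann algebra generated by the operator, and conclude that commuting self-adjoint generators force the projections to commute, whence Proposition \ref{prop:perpendicular-iff-projections-commuting} applies. The paper states these facts more tersely (in particular it does not spell out the Douglas-Lemma identification of $N_i$ with $\overline{\operatorname{ran}(A_i)}$ or the spectral-theoretic reason that $P_{N_i}\in W^{*}(A_i)$), so your write-up simply supplies details the paper takes as standard.
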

\begin{proof}
For $1\leq i,j\leq n$, $P_{N_{i}}$ and $P_{N_{j}}$ are the range
projections of the operators $M_{p_{i1}}M_{p_{i1}}^{*}+\ldots+M_{p_{im_{i}}}M_{p_{im_{i}}}^{*}$
and $M_{p_{j1}}M_{p_{j1}}^{*}+\ldots+M_{p_{jm_{j}}}M_{p_{jm_{j}}}^{*}$
respectively. In particular, the projections $P_{N_{i}}$and $P_{N_{j}}$
are contained in the von Neumann algebra generated by $M_{p_{i1}}M_{p_{i1}}^{*}+\ldots+M_{p_{im_{i}}}M_{p_{im_{i}}}^{*}$
and $M_{p_{j1}}M_{p_{j1}}^{*}+\ldots+M_{p_{jm_{j}}}M_{p_{jm_{j}}}^{*}$.
Since these latter operators are self-adjoint, and since they commute,
this von Neumann algebra is commutative, meaning in particular that
the projections $P_{N_{i}}$ and $P_{N_{j}}$ commute. Since $i$
and $j$ were arbitrary, Proposition  \ref{prop:perpendicular-iff-projections-commuting}
implies that the family $\{N_{1},\ldots,N_{n}\}$ is perpendicular. 
\end{proof}

To apply Lemma \ref{lem:generators-comm-decomp}, we will require
an identity of Guo and Wang from \cite{GW08}. Before presenting the
identity, it will be convenient to introduce some special notation
for operators that are related to the number operator $N$ defined
in Section \ref{sub:prelim-drury-areson}. For a function $f:\mathbb{Z}\to\mathbb{Z}$,
let $\left[f\left(N\right)\right]$ denote the (potentially unbounded)
self-adjoint operator defined on monomials in $\mathbb{C}[z]$ by
\[
\left[f\left(N\right)\right]z^{\alpha}=f\left(\left|\alpha\right|\right)z^{\alpha},\quad\alpha\in\mathbb{N}_{0}^{d},
\]
and extended by linearity to polynomials in $\mathbb{C}[z]$. Then,
for example, restricted to $\mathbb{C}[z]$, we can write the adjoints
of the coordinate multiplication operators $M_{z_{1}}^{*},\ldots,M_{z_{d}}^{*}$
on $H_{d}^{2}$ as
\[
M_{z_{i}}^{*}=\left[\frac{1}{N+1}\right]\partial_{i},\quad1\leq i\leq d,
\]
where $\partial_{1},\ldots,\partial_{d}$ denote the operators that
act on $\mathbb{C}[z]$ by partial differentiation in the variable
$z_{1},\ldots,z_{d}$ respectively. If the operator $\left[f\left(N\right)\right]$
happens to extend to a bounded operator on $H_{d}^{2}$, then we will
also write $\left[f\left(N\right)\right]$ for this extension. Recall
that for a polynomial $p$ in $\mathbb{C}[z]$, we write $M_{p}$
to denote the operator on $H_{d}^{2}$ corresponding to multiplication
by $p$. If $p$ is homogeneous of degree $n,$ then it is easy to
check that, restricted to $\mathbb{C}[z]$, we can write
\[
\left[f\left(N\right)\right]M_{p}=M_{p}\left[f\left(N+n\right)\right].
\]
These facts, combined with the general Leibniz rule
\[
\partial^{\alpha}\left(pq\right)=\sum_{\substack{\beta\in\mathbb{N}_{0}^{d}\\
\beta\leq\alpha
}
}\binom{\alpha}{\beta}\left(\partial^{\alpha-\beta}p\right)\left(\partial^{\beta}q\right),\quad\alpha\in\mathbb{N}_{0}^{d},\ p,q\in\mathbb{C}[z],
\]
where $\partial^{\alpha}=\partial_{\alpha_{1}}\cdots\partial_{\alpha_{d}}$,
lead to the following identity of Guo and Wang from \cite{GW08}.
\begin{prop}
[Guo-Wang Identity]\label{prop:guo-wang-ident}Let $p$ and $q$
be homogeneous polynomials in $\mathbb{C}[z]$ of degree $m$ and
$n$ respectively. Then
\[
M_{p}^{*}M_{q}=\sum_{\alpha\in\mathbb{N}_{0}^{d}}\frac{1}{\alpha!}\left[\frac{N!\left(N+m-n\right)!}{\left(N+m\right)!\left(N-n+\left|\alpha\right|\right)!}\right]M_{\partial^{\alpha}q}M_{\partial^{\alpha}p}^{*}.
\]

\end{prop}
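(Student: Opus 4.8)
The plan is to establish the identity on the dense subspace $\mathcal{P}_{d}$, where all the operators $\left[f\left(N\right)\right]$, $M_{p}$, $\partial_{i}$ make unambiguous sense, and then note that both sides are bounded (or that the identity, once verified on $\mathcal{P}_{d}$, determines the operators densely). First I would recall the formula $M_{z_{i}}^{*}=\left[\frac{1}{N+1}\right]\partial_{i}$, from which one obtains, for a monomial $z^{\beta}$ of degree $k=\left|\beta\right|$, the action of $M_{p}^{*}$ for homogeneous $p$ of degree $m$: iterating the adjoint formula and using the intertwining relation $\left[f\left(N\right)\right]M_{q}=M_{q}\left[f\left(N+\deg q\right)\right]$, one can write $M_{p}^{*}$ restricted to degree-$k$ homogeneous polynomials as a normalization constant depending only on $k$ and $m$ times the differential operator $p^{*}(\partial)$, where $p^{*}(\partial)$ is obtained from $p$ by conjugating coefficients and replacing $z_{i}$ by $\partial_{i}$. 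Concretely, I expect $M_{p}^{*}=\left[\frac{N!}{(N+m)!}\right]p^{*}(\partial)$ on $\mathcal{P}_{d}$ when $\deg p=m$; verifying this is a routine induction on $m$ (or a direct computation on monomials using $\langle z^{\alpha},z^{\beta}\rangle=\delta_{\alpha\beta}\alpha!/|\alpha|!$).

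Next I would compute $M_{p}^{*}M_{q}$ by first applying $M_{q}$ (which shifts degree up by $n$), then applying $M_{p}^{*}$ in the form just derived: $M_{p}^{*}M_{q}=\left[\frac{N!}{(N+m)!}\right]p^{*}(\partial)M_{q}$. The heart of the argument is to commute the differential operator $p^{*}(\partial)$ past the multiplication operator $M_{q}$ using the Leibniz rule. Writing $p^{*}(\partial)=\sum_{\gamma}\overline{a_{\gamma}}\,\partial^{\gamma}$ where $p=\sum_{\gamma,|\gamma|=m}a_{\gamma}z^{\gamma}$, the general Leibniz rule gives $\partial^{\gamma}(qf)=\sum_{\beta\le\gamma}\binom{\gamma}{\beta}(\partial^{\gamma-\beta}q)(\partial^{\beta}f)$, so that $p^{*}(\partial)M_{q}=\sum_{\alpha}\frac{1}{\alpha!}M_{\partial^{\alpha}q}\,(\partial^{\alpha}p)^{*}(\partial)$ after reorganizing the double sum and matching multinomial coefficients — here $\alpha$ plays the role of $\beta$ and $(\partial^{\alpha}p)^{*}(\partial)=\sum_{\gamma\ge\alpha}\overline{a_{\gamma}}\frac{\gamma!}{(\gamma-\alpha)!\,\alpha!}\cdot\alpha!\,\partial^{\gamma-\alpha}$, which one checks equals $\alpha!\,M_{\partial^{\alpha}p}^{*}$ up to the number-operator normalization that still needs to be tracked.

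The step I expect to be the main obstacle is bookkeeping the number-operator factors $\left[f\left(N\right)\right]$ correctly through all these manipulations. Each $\partial^{\alpha}$ lowers degree by $\left|\alpha\right|$, each $M_{\partial^{\alpha}q}$ raises degree by $n-\left|\alpha\right|$, and the normalization $\left[\frac{N!}{(N+m)!}\right]$ attached to the outer $M_{p}^{*}$ must be re-expressed, via the intertwining relations, in terms that absorb into each summand as $\left[\frac{N!(N+m-n)!}{(N+m)!(N-n-|\alpha|)!}\right]$; the factor $(N+m-n)!/(N-n-|\alpha|)!$ arises precisely because $M_{\partial^{\alpha}q}M_{\partial^{\alpha}p}^{*}$ carries its own intrinsic normalization (from $\deg\partial^{\alpha}q=n-|\alpha|$ and $\deg\partial^{\alpha}p=m-|\alpha|$) that I must divide out to isolate the stated operator. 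I would verify this factor-matching by testing both sides on an arbitrary monomial $z^{\beta}$ and comparing scalar coefficients degree-by-degree, which reduces the whole identity to an elementary (if intricate) identity among factorials and multinomial coefficients; once that scalar identity holds on all monomials, density of $\mathcal{P}_{d}$ and the fact that the sum on the right is locally finite (only finitely many $\alpha$ contribute on each homogeneous component, since $\partial^{\alpha}p=0$ once $|\alpha|>m$) complete the proof.
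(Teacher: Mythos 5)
Your proposal is correct and takes essentially the same route the paper does: the paper gives no proof beyond citing \cite{GW08} and listing exactly the three ingredients you use (the adjoint formula $M_{z_{i}}^{*}=\left[\tfrac{1}{N+1}\right]\partial_{i}$, the intertwining relation $\left[f\left(N\right)\right]M_{p}=M_{p}\left[f\left(N+\deg p\right)\right]$, and the general Leibniz rule), and your derivation of $M_{p}^{*}=\left[\tfrac{N!}{\left(N+m\right)!}\right]p^{*}\left(\partial\right)$ followed by the Leibniz reorganization is the standard way to carry this out. The monomial check you propose as the final step is in fact essential, because carrying it out shows the denominator factor should be $\left(N-n+\left|\alpha\right|\right)!$ rather than $\left(N-n-\left|\alpha\right|\right)!$ as printed: for instance, with $p=q=z_{1}$ in $H_{1}^{2}$ the printed version gives $M_{z_{1}}^{*}M_{z_{1}}z_{1}^{k}=\tfrac{k^{2}}{k+1}z_{1}^{k}$ instead of $z_{1}^{k}$, while the corrected exponent reproduces the isometry identity exactly.
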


We will apply Proposition \ref{prop:guo-wang-ident} to determine
when the hypotheses of Proposition \ref{lem:generators-comm-decomp}
hold.

\begin{lem}
\label{lem:commutator-Mp-Mq}Let $p$ and $q$ be homogeneous polynomials
in $\mathbb{C}[z]$ of degree $m$ and $n$ respectively. Then
\begin{align*}
M_{p} & M_{p}^{*}M_{q}M_{q}^{*}-M_{q}M_{q}^{*}M_{p}M_{p}^{*}\\
 & =\sum_{\alpha\in\mathbb{N}_{0}^{d}\backslash\left\{ 0\right\} }\frac{1}{\alpha!}\left[\tfrac{\left(N-m\right)!\left(N-n\right)!}{N!\left(N-m-n+\left|\alpha\right|\right)!}\right]\left(M_{p}M_{\partial^{\alpha}q}M_{q}^{*}M_{\partial^{\alpha}p}^{*}-M_{\partial^{\alpha}p}M_{q}M_{\partial^{\alpha}q}^{*}M_{p}^{*}\right).
\end{align*}
\end{lem}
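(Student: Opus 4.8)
The plan is to compute both products $M_pM_p^*M_qM_q^*$ and $M_qM_q^*M_pM_p^*$ by inserting the Guo--Wang Identity from Proposition~\ref{prop:guo-wang-ident} for the ``inner'' product of unequal-degree factors, and then subtract. The key observation is that $M_p^*M_q$ can be expanded using Proposition~\ref{prop:guo-wang-ident}, and symmetrically so can $M_q^*M_p$ (simply interchange the roles of $p,q$, which swaps $m\leftrightarrow n$). So I would write
\[
M_pM_p^*M_qM_q^* = M_p\Bigl(\sum_{\alpha}\tfrac{1}{\alpha!}\bigl[\tfrac{N!(N+m-n)!}{(N+m)!(N-n-|\alpha|)!}\bigr]M_{\partial^\alpha q}M_{\partial^\alpha p}^*\Bigr)M_q^*,
\]
and likewise
\[
M_qM_q^*M_pM_p^* = M_q\Bigl(\sum_{\alpha}\tfrac{1}{\alpha!}\bigl[\tfrac{N!(N+n-m)!}{(N+n)!(N-m-|\alpha|)!}\bigr]M_{\partial^\alpha p}M_{\partial^\alpha q}^*\Bigr)M_p^*.
\]

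\textbf{Reconciling the functional coefficients.} The main technical step is to pull the outer multipliers $M_p$ (on the left, of degree $m$) and $M_q^*$ (on the right, of degree $n$) through the function-of-$N$ operators, using the commutation rule $[f(N)]M_p = M_p[f(N+m)]$ and its adjoint form $M_p^*[f(N)] = [f(N+m)]M_p^*$, and similarly for $q$. After moving $M_p$ past $[f(N)]$ to the right and $M_q^*$ past $[f(N)]$ to the left (so that all the multiplier operators land between $M_{\partial^\alpha q}$ and $M_{\partial^\alpha p}^*$, or in the shifted position that matches the stated right-hand side), the argument of the bracketed function shifts by $m$ and by $-n$ (or $-m$ for the second term) as appropriate. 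I expect both terms, after this shuffling, to carry the \emph{same} functional coefficient
\[
\frac{1}{\alpha!}\left[\tfrac{(N-m)!(N-n)!}{N!(N-m-n-|\alpha|)!}\right],
\]
which is exactly why the difference collapses into the clean form asserted in the lemma. This is really just bookkeeping with factorial ratios: I would verify the $\alpha=0$ terms cancel (they should give $M_pM_q M_q^* M_p^*$ against $M_q M_p M_p^* M_q^*$ with matching coefficient $[(N-m)!(N-n)!/(N!(N-m-n)!)]$, and since $M_pM_q=M_qM_p$ these are equal), which accounts for the restriction of the sum to $\alpha\in\mathbb{N}_0^d\setminus\{0\}$ on the right-hand side.

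\textbf{Main obstacle.} The hard part will be carefully tracking the degree shifts in the functional calculus: when $M_p$ (degree $m$) is moved from the far left past a bracketed function, one must also account for the fact that it sits \emph{outside} a product $M_{\partial^\alpha q}M_{\partial^\alpha p}^*$ whose net degree is $(n-|\alpha|)-(m-|\alpha|) = n-m$, and similarly $M_q^*$ must be commuted leftward through everything. Getting the four shift parameters right in each of the two expansions so that the two bracketed functions genuinely coincide is the crux; a sign or a shift error here breaks the cancellation. Once the coefficients are seen to agree term-by-term, subtracting gives
\[
\sum_{\alpha\neq 0}\tfrac{1}{\alpha!}\bigl[\tfrac{(N-m)!(N-n)!}{N!(N-m-n-|\alpha|)!}\bigr]\bigl(M_pM_{\partial^\alpha q}M_{\partial^\alpha p}^*M_q^* - M_qM_{\partial^\alpha p}M_{\partial^\alpha q}^*M_p^*\bigr),
\]
which is the claimed identity; all identities are understood as operator identities on the dense subspace $\mathcal{P}_d$, so no convergence subtleties arise beyond noting the sums are finite (the factorials in the denominator vanish for $|\alpha|$ large, killing all but finitely many terms).
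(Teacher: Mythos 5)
Your proposal is correct, and it takes a genuinely different and in fact more economical route than the paper. The paper expands only $M_pM_p^*M_qM_q^*$ via the Guo--Wang identity, after which it must still convert the $\alpha=0$ term $\bigl[\tfrac{(N-m)!(N-n)!}{N!(N-m-n)!}\bigr]M_qM_pM_q^*M_p^*$ into $M_qM_q^*M_pM_p^*$; it does this by \emph{inverting} the Guo--Wang identity (solving for $M_pM_q^*$ in terms of $M_q^*M_p$ plus a correction sum over $\beta\neq 0$) and then commuting $M_q$ through the resulting bracket --- this is where the paper's correction terms $-M_qM_{\partial^\beta p}M_{\partial^\beta q}^*M_p^*$ come from. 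You instead expand both four-fold products symmetrically (swapping $p\leftrightarrow q$, $m\leftrightarrow n$) and exploit two facts: the post-commutation coefficient $\bigl[\tfrac{(N-m)!(N-n)!}{N!(N-m-n-|\alpha|)!}\bigr]$ is symmetric in $m$ and $n$, and the two $\alpha=0$ terms agree because the multipliers commute among themselves and their adjoints commute among themselves, so they cancel upon subtraction. This avoids the inversion step entirely and makes the antisymmetry of the answer manifest. Your bookkeeping checks out: the only commutation needed is pulling the leftmost multiplier ($M_p$ of degree $m$, resp.\ $M_q$ of degree $n$) rightward through the bracket via $M_p[g(N)]=[g(N-m)]M_p$, which shifts $N\mapsto N-m$ (resp.\ $N\mapsto N-n$) and produces exactly the common coefficient; the rightmost adjoint factor never needs to move, so the ``four shift parameters'' you worried about reduce to one shift per expansion.
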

\begin{proof}
The Guo-Wang identity from Proposition \ref{prop:guo-wang-ident}
gives
\begin{align*}
M_{p}M_{p}^{*}M_{q}M_{q}^{*} & =\sum_{\alpha\in\mathbb{N}_{0}^{d}}M_{p}\frac{1}{\alpha!}\left[\tfrac{N!\left(N+m-n\right)!}{\left(N+m\right)!\left(N-n+\left|\alpha\right|\right)!}\right]M_{\partial^{\alpha}q}M_{\partial^{\alpha}p}^{*}M_{q}^{*}\\
 & =\sum_{\alpha\in\mathbb{N}_{0}^{d}}\frac{1}{\alpha!}\left[\tfrac{\left(N-m\right)!\left(N-n\right)!}{N!\left(N-m-n+\left|\alpha\right|\right)!}\right]M_{p}M_{\partial^{\alpha}q}M_{q}^{*}M_{\partial^{\alpha}p}^{*},
\end{align*}
and by symmetry this implies
\[
M_{q}M_{q}^{*}M_{p}M_{p}^{*}=\sum_{\alpha\in\mathbb{N}_{0}^{d}}\frac{1}{\alpha!}\left[\tfrac{\left(N-m\right)!\left(N-n\right)!}{N!\left(N-m-n+\left|\alpha\right|\right)!}\right]M_{\partial^{\alpha}p}M_{q}M_{\partial^{\alpha}q}^{*}M_{p}^{*}.
\]
The result now follows by taking the difference of these identities. 
\end{proof}

\begin{lem}
\label{lem:moderate-comm-criterion}Let $p$ and $q$ be homogeneous
polynomials in $\mathbb{C}[z]$ of degree $m$ and $n$ respectively.
Then $M_{p}M_{p}^{*}$ and $M_{q}M_{q}^{*}$ commute if and only if
the operator
\[
\sum_{\alpha\in\mathbb{N}_{0}^{d}\backslash\left\{ 0\right\} }\frac{1}{\alpha!}\left[\frac{\left(N-m\right)!\left(N-n\right)!}{N!\left(N-m-n-\left|\alpha\right|\right)!}\right]M_{p}M_{\partial^{\alpha}q}M_{q}^{*}M_{\partial^{\alpha}p}^{*}
\]
is self-adjoint.\end{lem}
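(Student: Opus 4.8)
The plan is to read the equivalence off directly from the identity in Lemma~\ref{lem:commutator-Mp-Mq}. Write $C = M_pM_p^*M_qM_q^* - M_qM_q^*M_pM_p^*$ for the commutator of $M_pM_p^*$ and $M_qM_q^*$, and for $\alpha \in \mathbb{N}_0^d \setminus \{0\}$ put
\[
c_\alpha = \left[\frac{(N-m)!\,(N-n)!}{N!\,(N-m-n-|\alpha|)!}\right], \qquad A_\alpha = M_pM_{\partial^\alpha q}M_{\partial^\alpha p}^*M_q^*,
\]
so that the operator named in the statement is $S = \sum_{\alpha \ne 0}\tfrac{1}{\alpha!}\,c_\alpha A_\alpha$. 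A direct computation of adjoints gives $A_\alpha^* = M_qM_{\partial^\alpha p}M_{\partial^\alpha q}^*M_p^*$, which is exactly the second operator appearing inside the parentheses in Lemma~\ref{lem:commutator-Mp-Mq}; thus that lemma reads $C = \sum_{\alpha \ne 0}\tfrac{1}{\alpha!}\,c_\alpha(A_\alpha - A_\alpha^*)$. The whole argument therefore reduces to showing that $\sum_{\alpha \ne 0}\tfrac{1}{\alpha!}\,c_\alpha A_\alpha^* = S^*$.

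The key step, which I would carry out by a bare degree count on monomials in $\mathcal{P}_d$, is that each $A_\alpha$ is homogeneous of degree zero: its four factors $M_p$, $M_{\partial^\alpha q}$, $M_{\partial^\alpha p}^*$, $M_q^*$ shift the degree of a monomial by $m$, $n-|\alpha|$, $-(m-|\alpha|)$, $-n$ respectively, for a net shift of zero (and $A_\alpha = 0$ unless $|\alpha| \le \min\{m,n\}$); the same holds for $A_\alpha^*$. Hence $A_\alpha$ and $A_\alpha^*$ commute with the number operator $N$, and so with every function of $N$, in particular with the self-adjoint operator $c_\alpha$. Since $c_\alpha = c_\alpha^*$, it follows that $c_\alpha A_\alpha^* = A_\alpha^* c_\alpha = (c_\alpha A_\alpha)^*$, and summing yields $\sum_{\alpha \ne 0}\tfrac{1}{\alpha!}\,c_\alpha A_\alpha^* = S^*$. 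It is worth recording that $S$ is bounded: in the proof of Lemma~\ref{lem:commutator-Mp-Mq} one sees $S = M_pM_p^*M_qM_q^* - \bigl[\tfrac{(N-m)!(N-n)!}{N!(N-m-n)!}\bigr]M_qM_pM_q^*M_p^*$, and the bracketed coefficient is a bounded function of $N$, so $S^*$ is the genuine Hilbert-space adjoint and the word ``self-adjoint'' in the statement is unambiguous.

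Putting the two halves together, $C = S - S^*$. Consequently $M_pM_p^*$ and $M_qM_q^*$ commute if and only if $C = 0$, which holds if and only if $S = S^*$, i.e.\ if and only if $S$ is self-adjoint, which is precisely the claimed equivalence. I do not anticipate any real obstacle here: all of the analytic content already lies in Lemma~\ref{lem:commutator-Mp-Mq}, and the only point that needs care is the homogeneity bookkeeping that lets one recognize the second term on the right-hand side of that identity as $S^*$ rather than merely a relative of it; everything else is formal, and any concern about the unboundedness of operators of the form $[f(N)]$ is dispatched by performing the manipulations on the dense domain $\mathcal{P}_d$.
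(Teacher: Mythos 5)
Your argument is correct and follows the same route as the paper: both identify the second term in each summand of Lemma~\ref{lem:commutator-Mp-Mq} as the adjoint $\left(M_{p}M_{\partial^{\alpha}q}M_{\partial^{\alpha}p}^{*}M_{q}^{*}\right)^{*}$ and conclude that the commutator is $S-S^{*}$. The only difference is that you make explicit the degree-zero homogeneity of each summand (hence its commutation with the bounded functions of $N$), a point the paper's ``follows immediately'' leaves tacit, and your verification of it is correct.
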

\begin{proof}
This follows immediately from Lemma \ref{lem:commutator-Mp-Mq}, using
the observation that for $\alpha$ in $\mathbb{N}_{0}^{d}$, 
\[
\begin{split}M_{p}M_{\partial^{\alpha}q}M_{q}^{*}M_{\partial^{\alpha}p}^{*} & -M_{\partial^{\alpha}p}M_{q}M_{\partial^{\alpha}q}^{*}M_{p}^{*}\\
 & =M_{p}M_{\partial^{\alpha}q}M_{q}^{*}M_{\partial^{\alpha}p}^{*}-\left(M_{p}M_{\partial^{\alpha}q}M_{q}^{*}M_{\partial^{\alpha}p}^{*}\right)^{*}.
\end{split}
\]

\end{proof}

\begin{lem}
\label{lem:simple-comm-criterion}Let $p$ and $q$ be homogeneous
polynomials in $\mathbb{C}[z]$ of degree $m$ and $n$ respectively.
Then $M_{p}M_{p}^{*}$ and $M_{q}M_{q}^{*}$ commute if each operator
\[
M_{p}M_{\partial^{\alpha}q}M_{q}^{*}M_{\partial^{\alpha}p}^{*},\quad\alpha\in\mathbb{N}_{0}^{d}\backslash\left\{ 0\right\} 
\]
is self-adjoint.\end{lem}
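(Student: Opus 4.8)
The plan is to derive this as an immediate corollary of Lemma~\ref{lem:commutator-Mp-Mq} (equivalently, of Lemma~\ref{lem:moderate-comm-criterion}): the hypothesis is arranged precisely so that the commutator formula in Lemma~\ref{lem:commutator-Mp-Mq} vanishes term by term, so essentially no new computation is needed.

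First I would set $T_{\alpha}=M_{p}M_{\partial^{\alpha}q}M_{\partial^{\alpha}p}^{*}M_{q}^{*}$ for $\alpha\in\mathbb{N}_{0}^{d}\backslash\{0\}$, and record, exactly as in the proof of Lemma~\ref{lem:moderate-comm-criterion}, that $M_{q}M_{\partial^{\alpha}p}M_{\partial^{\alpha}q}^{*}M_{p}^{*}=T_{\alpha}^{*}$. Thus the bracketed difference $M_{p}M_{\partial^{\alpha}q}M_{\partial^{\alpha}p}^{*}M_{q}^{*}-M_{q}M_{\partial^{\alpha}p}M_{\partial^{\alpha}q}^{*}M_{p}^{*}$ appearing inside the sum of Lemma~\ref{lem:commutator-Mp-Mq} is exactly $T_{\alpha}-T_{\alpha}^{*}$. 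Now, under the hypothesis that each $T_{\alpha}$ is self-adjoint, we have $T_{\alpha}-T_{\alpha}^{*}=0$ for every $\alpha\neq 0$. Substituting this into the identity of Lemma~\ref{lem:commutator-Mp-Mq} makes the entire sum vanish, so $M_{p}M_{p}^{*}M_{q}M_{q}^{*}-M_{q}M_{q}^{*}M_{p}M_{p}^{*}=0$; that is, $M_{p}M_{p}^{*}$ and $M_{q}M_{q}^{*}$ commute.

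Since the reduction is this direct, there is no substantial obstacle. The only point that warrants a word of care is that the sum in Lemma~\ref{lem:commutator-Mp-Mq} is finite --- it is, because $\partial^{\alpha}p=0$ once $|\alpha|>m$ and $\partial^{\alpha}q=0$ once $|\alpha|>n$ --- so term-by-term vanishing genuinely forces the whole sum to vanish. Alternatively, one could route the argument through Lemma~\ref{lem:moderate-comm-criterion} instead: each coefficient operator $\bigl[\tfrac{(N-m)!\,(N-n)!}{N!\,(N-m-n-|\alpha|)!}\bigr]$ is a self-adjoint function of the number operator $N$, and $T_{\alpha}$ preserves the homogeneous grading of $\mathcal{P}_{d}$ (the net degree shift of $M_{p}M_{\partial^{\alpha}q}M_{\partial^{\alpha}p}^{*}M_{q}^{*}$ is $m+(n-|\alpha|)-(m-|\alpha|)-n=0$), so it commutes with that function of $N$; hence each summand $\tfrac{1}{\alpha!}\bigl[\tfrac{(N-m)!\,(N-n)!}{N!\,(N-m-n-|\alpha|)!}\bigr]T_{\alpha}$ is self-adjoint whenever $T_{\alpha}$ is, the operator in Lemma~\ref{lem:moderate-comm-criterion} is self-adjoint, and the conclusion follows. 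I would present the first route, as it is shorter and sidesteps any discussion of the (possibly unbounded) coefficient operators.
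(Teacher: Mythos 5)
Your proposal is correct and follows essentially the same route as the paper, which simply cites Lemma \ref{lem:moderate-comm-criterion}: you unwind that reduction explicitly by noting that each bracketed difference in Lemma \ref{lem:commutator-Mp-Mq} is $T_{\alpha}-T_{\alpha}^{*}$ and hence vanishes under the hypothesis. Your second route (checking that each summand, coefficient included, is self-adjoint because $T_{\alpha}$ preserves the homogeneous grading) is a fine alternative but adds nothing essential.
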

\begin{proof}
This follows immediately from Lemma \ref{lem:moderate-comm-criterion}.
\end{proof}

\begin{lem}
\label{lem:linear-comm-criterion}Let $p$ and $q$ be linear polynomials
in $\mathbb{C}[z]$. Then the operators $M_{p}M_{P}^{*}$ and $M_{q}M_{q}^{*}$
commute if either $p=q$ or $p\perp q$.\end{lem}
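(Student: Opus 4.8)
The plan is to compute the commutator explicitly from the identity of Lemma~\ref{lem:commutator-Mp-Mq} and then read off exactly when it vanishes. Write $p=\sum_{i=1}^{d}a_{i}z_{i}$ and $q=\sum_{i=1}^{d}b_{i}z_{i}$, and recall that on linear polynomials the inner product of $H_{d}^{2}$ is simply $\langle p,q\rangle=\sum_{i}a_{i}\overline{b_{i}}$. Since $p$ and $q$ have degree $1$, we have $\partial^{\alpha}p=\partial^{\alpha}q=0$ whenever $|\alpha|\geq 2$, so in Lemma~\ref{lem:commutator-Mp-Mq}, applied with $m=n=1$, only the terms with $\alpha=e_{1},\dots,e_{d}$ survive; for these, $\partial^{e_{i}}p=a_{i}$ and $\partial^{e_{i}}q=b_{i}$ are constants, so $M_{\partial^{e_{i}}p}^{*}=\overline{a_{i}}I$ and $M_{\partial^{e_{i}}q}=b_{i}I$. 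Summing over $i$ and pulling out the common factor $\left[\tfrac{(N-1)!^{2}}{N!(N-3)!}\right]$ gives
\[
M_{p}M_{p}^{*}M_{q}M_{q}^{*}-M_{q}M_{q}^{*}M_{p}M_{p}^{*}=\left[\tfrac{(N-1)!^{2}}{N!(N-3)!}\right]A,\qquad A:=\overline{\langle p,q\rangle}\,M_{p}M_{q}^{*}-\langle p,q\rangle\,M_{q}M_{p}^{*},
\]
and one checks directly that $A^{*}=-A$.

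The easy implication is then immediate. If $p\perp q$ then $\langle p,q\rangle=0$, so $A=0$; and if $p=\lambda q$ for a scalar $\lambda$, then $M_{p}=\lambda M_{q}$ and $\langle p,q\rangle=\lambda\|q\|^{2}$, so substituting gives $A=|\lambda|^{2}\|q\|^{2}M_{q}M_{q}^{*}-|\lambda|^{2}\|q\|^{2}M_{q}M_{q}^{*}=0$. In either case the commutator vanishes.

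For the converse, suppose the commutator is zero, i.e.\ $\left[\tfrac{(N-1)!^{2}}{N!(N-3)!}\right]A=0$. Both sides of the displayed identity map each homogeneous subspace of $H_{d}^{2}$ into itself, so the identity may be restricted degree by degree; on the degree-$3$ component the left factor acts as multiplication by the nonzero scalar $\tfrac{2!^{2}}{3!\,0!}=\tfrac{2}{3}$, so $A$ must annihilate every homogeneous polynomial of degree $3$, in particular $A(q^{3})=0$ (we may assume $q\neq 0$, the case $q=0$ being trivial). Using $M_{z_{i}}^{*}f=(N+1)^{-1}\partial_{i}f$, one computes $M_{z_{i}}^{*}(q^{3})=(N+1)^{-1}(3b_{i}q^{2})=b_{i}q^{2}$, hence $M_{q}^{*}(q^{3})=\|q\|^{2}q^{2}$ and $M_{p}^{*}(q^{3})=\overline{\langle p,q\rangle}\,q^{2}$. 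Substituting into $A(q^{3})=0$ gives
\[
0=\overline{\langle p,q\rangle}\,\|q\|^{2}\,p\,q^{2}-|\langle p,q\rangle|^{2}\,q^{3}=\overline{\langle p,q\rangle}\,q^{2}\left(\|q\|^{2}p-\langle p,q\rangle q\right).
\]
Since $q\neq 0$ the polynomial $q^{2}$ is a nonzero element of $H_{d}^{2}$, so either $\langle p,q\rangle=0$, i.e.\ $p\perp q$, or $\|q\|^{2}p=\langle p,q\rangle q$, i.e.\ $p$ is a scalar multiple of $q$; as rescaling $p$ only rescales $M_{p}M_{p}^{*}$, this is precisely the asserted dichotomy.

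The only real subtlety I anticipate is that the operator $\left[\tfrac{(N-1)!^{2}}{N!(N-3)!}\right]$ is not injective --- it annihilates the homogeneous components of degree at most $2$ --- so the vanishing of the commutator does not by itself force $A=0$. The way around this is to feed the identity a well-chosen element of degree at least $3$: the cube $q^{3}$ works because $\left[\tfrac{(N-1)!^{2}}{N!(N-3)!}\right]$ acts invertibly on degree $3$ and because $M_{p}^{*}$ and $M_{q}^{*}$ are easy to evaluate on $q^{3}$ via $M_{z_{i}}^{*}=(N+1)^{-1}\partial_{i}$.
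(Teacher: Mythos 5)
Your proof is correct and follows essentially the same route as the paper: specialize the Guo--Wang commutator formula (Lemma~\ref{lem:commutator-Mp-Mq}) to linear polynomials, where only the first-order terms survive and the commutator collapses to $\left[\tfrac{(N-1)!^{2}}{N!(N-3)!}\right]\left(\overline{\langle p,q\rangle}M_{p}M_{q}^{*}-\langle p,q\rangle M_{q}M_{p}^{*}\right)$. The one place you go beyond the paper is the converse direction: the paper simply asserts that $M_{p}M_{q}^{*}$ is not self-adjoint when $p\ne q$ (a claim that is left unjustified and, read literally, fails for $p=2q$), whereas your evaluation against $q^{3}$ on the degree-$3$ component --- where the scalar weight $2/3$ is nonzero --- gives a complete argument and correctly identifies the dichotomy as ``parallel or orthogonal'' rather than literally ``equal or orthogonal''.
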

\begin{proof}
Write the polynomials $p$ and $q$ as
\[
\begin{aligned}p\left(z_{1},\ldots,z_{d}\right) & =a_{1}z_{1}+\ldots+a_{d}z_{d}\\
q\left(z_{1},\ldots,z_{d}\right) & =b_{1}z_{1}+\ldots+b_{d}z_{d}.
\end{aligned}
\]
Then
\begin{align}
\sum_{\alpha\in\mathbb{N}_{0}^{d}\backslash\left\{ 0\right\} }\frac{1}{\alpha!}\left[\tfrac{\left(N-m\right)!\left(N-n\right)!}{N!\left(N-m-n+\left|\alpha\right|\right)!}\right] & M_{p}M_{\partial^{\alpha}q}M_{q}^{*}M_{\partial^{\alpha}p}^{*}\label{eq:lem-linear-comm-criterion-1}\\
 & =\sum_{i=1}^{d}\left[\tfrac{\left(N-1\right)!\left(N-1\right)!}{N!\left(N+1\right)!}\right]a_{i}\overline{b_{i}}M_{p}M_{q}^{*}\nonumber \\
 & =\left\langle p,q\right\rangle \left[\tfrac{\left(N-1\right)!\left(N-1\right)!}{N!\left(N+1\right)!}\right]M_{p}M_{q}^{*}.\nonumber 
\end{align}
Hence the operator (\ref{eq:lem-linear-comm-criterion-1}) is self-adjoint
if either $p=q$ or $p\perp q$, and the result follows by Lemma \ref{lem:moderate-comm-criterion}.
\end{proof}

\begin{lem}
\label{lem:monomials-comm}Let $z^{\lambda}$ and $z^{\mu}$ be monomials
in $\mathbb{C}[z]$ for $\lambda$ and $\mu$ in $\mathbb{N}_{0}^{d}$.
Then the operators $M_{z^{\lambda}}M_{z^{\lambda}}^{*}$ and $M_{z^{\mu}}M_{z^{\mu}}^{*}$
commute.\end{lem}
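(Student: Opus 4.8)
The plan is to show that, for every multi-index $\lambda$ in $\mathbb{N}_{0}^{d}$, the operator $M_{z^{\lambda}}M_{z^{\lambda}}^{*}$ is \emph{diagonal} with respect to the orthogonal basis of monomials $\left\{ z^{\beta}\mid\beta\in\mathbb{N}_{0}^{d}\right\} $ of $H_{d}^{2}$. Once this is known the conclusion is immediate, since any two bounded operators that are diagonalized by a common orthogonal basis commute; in particular $M_{z^{\lambda}}M_{z^{\lambda}}^{*}$ and $M_{z^{\mu}}M_{z^{\mu}}^{*}$ commute.

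To establish the diagonality, I would fix $\beta$ in $\mathbb{N}_{0}^{d}$ and first compute $M_{z^{\lambda}}^{*}z^{\beta}$ from the defining relation of the adjoint: for every $\alpha$ in $\mathbb{N}_{0}^{d}$,
\[
\left\langle M_{z^{\lambda}}^{*}z^{\beta},z^{\alpha}\right\rangle =\left\langle z^{\beta},M_{z^{\lambda}}z^{\alpha}\right\rangle =\left\langle z^{\beta},z^{\alpha+\lambda}\right\rangle ,
\]
which, by orthogonality of the monomials, vanishes unless $\beta=\alpha+\lambda$. Hence $M_{z^{\lambda}}^{*}z^{\beta}$ is a scalar multiple of $z^{\beta-\lambda}$ when $\beta\geq\lambda$ in the componentwise order, and is $0$ otherwise; the scalar is simply $\left\Vert z^{\beta}\right\Vert ^{2}/\left\Vert z^{\beta-\lambda}\right\Vert ^{2}$. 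Applying $M_{z^{\lambda}}$ then returns a scalar multiple of $z^{\beta}$ itself, so each monomial is an eigenvector of $M_{z^{\lambda}}M_{z^{\lambda}}^{*}$, which is exactly the asserted diagonality. (For definiteness one can record the eigenvalue as $\beta!\,\left|\beta-\lambda\right|!\,/\,\left|\beta\right|!\,(\beta-\lambda)!$, but this is not needed.)

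To finish, since $M_{z^{\lambda}}M_{z^{\lambda}}^{*}$ and $M_{z^{\mu}}M_{z^{\mu}}^{*}$ are bounded and each $z^{\beta}$ is an eigenvector of both, the two operators agree after composition in either order on every $z^{\beta}$, hence on the dense subspace $\mathcal{P}_{d}$, and therefore on all of $H_{d}^{2}$ by continuity. I do not expect any real obstacle here: the only points requiring care are the orthogonality of the monomial basis of $H_{d}^{2}$ and the routine passage from commutation on $\mathcal{P}_{d}$ to commutation on $H_{d}^{2}$.
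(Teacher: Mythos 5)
Your proof is correct, but it takes a genuinely different route from the paper. You diagonalize: each $M_{z^{\lambda}}M_{z^{\lambda}}^{*}$ acts on the orthogonal monomial basis of $H_{d}^{2}$ by sending $z^{\beta}$ to a nonnegative scalar multiple of itself (namely $\left\Vert z^{\beta}\right\Vert ^{2}/\left\Vert z^{\beta-\lambda}\right\Vert ^{2}$ when $\beta\geq\lambda$ componentwise, and $0$ otherwise), so any two such operators are simultaneously diagonal and hence commute; the adjoint computation and the passage from $\mathcal{P}_{d}$ to $H_{d}^{2}$ by boundedness are both carried out correctly. The paper instead runs the lemma through its general commutation criterion (Lemma \ref{lem:simple-comm-criterion}, derived from the Guo--Wang identity): it computes that $M_{z^{\lambda}}M_{\partial^{\alpha}z^{\mu}}M_{\partial^{\alpha}z^{\lambda}}^{*}M_{z^{\mu}}^{*}$ is a nonnegative multiple of $M_{z^{\nu}}M_{z^{\nu}}^{*}$ with $\nu=\lambda+\mu-\alpha$, hence self-adjoint, for every $\alpha\ne0$. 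Your argument is more elementary and self-contained for this particular lemma, exploiting the special fact that monomial multipliers admit a common diagonalizing basis; the paper's approach buys uniformity, since the same criterion is what handles the linear-polynomial and disjoint-variable cases (Lemmas \ref{lem:linear-comm-criterion} and \ref{lem:disjoint-comm}), where no such common eigenbasis is available.
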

\begin{proof}
For $\alpha$ in $\mathbb{N}_{0}^{d}$,
\[
z^{\lambda}\left(\partial^{\alpha}z^{\mu}\right)=c_{\mu}z^{\lambda_{1}+\mu_{1}-\alpha_{1}}\cdots z^{\lambda_{d}+\mu_{d}-\alpha_{d}},
\]
where
\[
c_{\mu}=\begin{cases}
{\displaystyle \prod_{i=1}^{d}\mu_{i}\left(\mu_{i}-1\right)\cdots\left(\mu_{i}-\alpha_{i}+1\right)} & \mbox{if}\ \alpha_{i}\leq\mu_{i}\ \mbox{for}\ 1\leq i\leq d,\\
0 & \mbox{otherwise}.
\end{cases}
\]
Similarly,
\[
\left(\partial^{\alpha}z^{\lambda}\right)z^{\mu}=c_{\lambda}z^{\lambda_{1}+\mu_{1}-\alpha_{1}}\cdots z^{\lambda_{d}+\mu_{d}-\alpha_{d}},
\]
where
\[
c_{\lambda}=\begin{cases}
{\displaystyle \prod_{i=1}^{d}\lambda_{i}\left(\lambda_{i}-1\right)\cdots\left(\lambda_{i}-\alpha_{i}+1\right)} & \mbox{if}\ \alpha_{i}\leq\lambda_{i}\ \mbox{for}\ 1\leq i\leq d,\\
0 & \mbox{otherwise}.
\end{cases}
\]
Let $\nu=\left(\lambda_{1}+\mu_{1}-\alpha_{1},\ldots,\lambda_{d}+\mu_{d}-\alpha_{d}\right)$.
Then 
\[
M_{z^{\lambda}}M_{\partial^{\alpha}z^{\mu}}M_{z^{\mu}}^{*}M_{\partial^{\alpha}z^{\lambda}}^{*}=\begin{cases}
c_{\lambda}c_{\mu}M_{z^{\nu}}M_{z^{\nu}}^{*} & \mbox{if}\ \nu\in\mathbb{N}_{0}^{d},\\
0 & \mbox{otherwise}.
\end{cases}
\]
In particular, this operator is self-adjoint. Therefore, by Lemma
\ref{lem:simple-comm-criterion}, the operators $M_{z^{\lambda}}M_{z^{\lambda}}^{*}$
and $M_{z^{\mu}}M_{z^{\mu}}^{*}$ commute.
\end{proof}

\begin{lem}
\label{lem:disjoint-comm}Let $p$ and $q$ be homogeneous in $\mathbb{C}[z]$
in distinct variables. Then the operators $M_{p}M_{p}^{*}$ and $M_{q}M_{q}^{*}$
commute.\end{lem}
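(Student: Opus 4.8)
The plan is to reduce at once to Lemma~\ref{lem:simple-comm-criterion}, which tells us that $M_pM_p^*$ and $M_qM_q^*$ commute provided each operator $M_pM_{\partial^\alpha q}M_{\partial^\alpha p}^*M_q^*$, for $\alpha\in\mathbb{N}_0^d\setminus\{0\}$, is self-adjoint. The phrase ``$p$ and $q$ homogeneous in distinct variables'' should be read precisely as: there exist disjoint subsets $Z_p,Z_q\subseteq\{z_1,\ldots,z_d\}$ with $p\in\mathbb{C}[Z_p]$ and $q\in\mathbb{C}[Z_q]$. Under this reading I claim that each operator appearing in Lemma~\ref{lem:simple-comm-criterion} is not merely self-adjoint but identically zero, which makes the conclusion immediate.

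To verify the claim, first I would fix $\alpha\in\mathbb{N}_0^d\setminus\{0\}$ and choose an index $k$ with $\alpha_k\geq 1$. Since $Z_p\cap Z_q=\varnothing$, at least one of $z_k\notin Z_p$ or $z_k\notin Z_q$ must hold. In the first case $p$ does not involve the variable $z_k$, so the multi-index derivative $\partial^\alpha p$ vanishes (here $\alpha_k\geq 1$), whence $M_{\partial^\alpha p}^*=0$; in the second case similarly $\partial^\alpha q=0$, whence $M_{\partial^\alpha q}=0$. Either way $M_pM_{\partial^\alpha q}M_{\partial^\alpha p}^*M_q^*=0$, which is trivially self-adjoint. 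Applying Lemma~\ref{lem:simple-comm-criterion} then yields that $M_pM_p^*$ and $M_qM_q^*$ commute.

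There is essentially no obstacle in this argument; the only point that needs a moment of care is pinning down the meaning of ``distinct variables'' and observing that $\partial^\alpha$ annihilates any polynomial in whose variables $\alpha$ is not supported. One could instead bypass Lemma~\ref{lem:simple-comm-criterion} and feed this vanishing directly into the commutator formula of Lemma~\ref{lem:commutator-Mp-Mq} (or into the Guo--Wang identity, Proposition~\ref{prop:guo-wang-ident}), but routing through Lemma~\ref{lem:simple-comm-criterion} is the shortest path. Note also that the homogeneity of $p$ and $q$, which is needed to invoke the preceding lemmas, is part of the hypothesis.
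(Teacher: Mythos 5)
Your proposal is correct and follows essentially the same route as the paper: both reduce to Lemma \ref{lem:simple-comm-criterion} by observing that for each $\alpha\neq 0$ at least one of $\partial^{\alpha}p$ and $\partial^{\alpha}q$ vanishes, so every operator $M_{p}M_{\partial^{\alpha}q}M_{\partial^{\alpha}p}^{*}M_{q}^{*}$ is zero and hence trivially self-adjoint. Your write-up merely makes explicit the choice of an index $k$ with $\alpha_{k}\geq 1$, which the paper leaves implicit.
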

\begin{proof}
Since $p$ and $q$ are polynomials in disjoint variables, for every
$\alpha\in\mathbb{N}_{0}^{d}\backslash\left\{ 0\right\} $, at least
one of $\partial^{\alpha}p$ and $\partial^{\alpha}q$ must be zero,
and hence at least one of $M_{\partial^{\alpha}p}$ and $M_{\partial^{\alpha}q}$
must be zero. In particular, this implies that $M_{p}M_{\partial^{\alpha}q}M_{q}^{*}M_{\partial^{\alpha}p}^{*}=0$,
and it follows from Lemma \ref{lem:simple-comm-criterion} that $M_{p}M_{p}^{*}$
and $M_{q}M_{q}^{*}$ commute.
\end{proof}

\subsection{\label{sub:perp-submodules-rank-one}Perpendicular submodules}

In this section, we will establish the perpendicularity of many families
of submodules of $H_{d}^{2}$ using the criteria from Section \ref{sub:criteria-for-perpendicularity}.

\begin{prop}
\label{prop:linear-family-perpendicular-criterion}Let $N_{1},\ldots,N_{n}$
be submodules of $H_{d}^{2}$ that are generated by mutually orthogonal
sets of linear polynomials $F_{1},\ldots,F_{n}$ respectively. Then
the family $\{N_{1},\ldots,N_{n}\}$ is perpendicular.\end{prop}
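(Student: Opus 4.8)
The plan is to invoke Lemma~\ref{lem:generators-comm-decomp}, which reduces perpendicularity of $\{N_1,\dots,N_n\}$ to the commutativity of a family of positive operators assembled from the generators. For each $i$, write $F_i=\{p_{i1},\dots,p_{im_i}\}$ and set
\[
A_i=M_{p_{i1}}M_{p_{i1}}^{*}+\dots+M_{p_{im_i}}M_{p_{im_i}}^{*},\qquad 1\leq i\leq n.
\]
By Lemma~\ref{lem:generators-comm-decomp} it is enough to show that the operators $A_1,\dots,A_n$ commute. Each $A_i$ trivially commutes with itself, so the only thing to check is that $A_i$ and $A_j$ commute when $i\neq j$.

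Fix $i\neq j$. Expanding the commutator,
\[
A_iA_j-A_jA_i=\sum_{k=1}^{m_i}\sum_{l=1}^{m_j}\left(M_{p_{ik}}M_{p_{ik}}^{*}M_{p_{jl}}M_{p_{jl}}^{*}-M_{p_{jl}}M_{p_{jl}}^{*}M_{p_{ik}}M_{p_{ik}}^{*}\right).
\]
Since $i\neq j$ and the sets $F_1,\dots,F_n$ are mutually orthogonal, each $p_{ik}$ is orthogonal to each $p_{jl}$; in particular $p_{ik}\neq p_{jl}$, as both are nonzero. Hence Lemma~\ref{lem:linear-comm-criterion} applies to the pair $p_{ik},p_{jl}$ of linear polynomials and shows that $M_{p_{ik}}M_{p_{ik}}^{*}$ commutes with $M_{p_{jl}}M_{p_{jl}}^{*}$, so every summand above vanishes. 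Thus $A_iA_j=A_jA_i$, the family $\{A_1,\dots,A_n\}$ is commuting, and Lemma~\ref{lem:generators-comm-decomp} yields that $\{N_1,\dots,N_n\}$ is perpendicular.

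No real obstacle arises: the argument is a direct combination of Lemma~\ref{lem:generators-comm-decomp} with the linear case of the commutativity criterion, Lemma~\ref{lem:linear-comm-criterion}. The one point deserving a moment's attention is that the generators inside a single block $F_i$ need not be mutually orthogonal, so one should be sure this is irrelevant to the argument: the diagonal commutators $[A_i,A_i]$ are automatically zero, and the off-diagonal commutators are treated one generator-pair at a time, invoking only the hypothesis that distinct blocks are orthogonal.
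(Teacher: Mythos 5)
Your proof is correct and follows exactly the route the paper takes: it combines Lemma \ref{lem:generators-comm-decomp} with the linear commutation criterion of Lemma \ref{lem:linear-comm-criterion}, expanding the commutator $[A_i,A_j]$ bilinearly into pairwise commutators that vanish because generators from distinct blocks are orthogonal. Your remark that orthogonality within a single block $F_i$ is not needed is a correct and worthwhile clarification of a point the paper leaves implicit.
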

\begin{proof}
This follows immediately from Lemma \ref{lem:generators-comm-decomp}
and Lemma \ref{eq:lem-linear-comm-criterion-1}.
\end{proof}

\begin{prop}
\label{prop:monomial-family-perpendicular}Let $N_{1},\ldots,N_{n}$
be submodules of $H_{d}^{2}$ each generated by monomials. Then \textup{the
family $\{N_{1},\ldots,N_{n}\}$} is perpendicular.\end{prop}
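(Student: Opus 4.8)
The plan is to reduce everything to the single-monomial case recorded in Lemma~\ref{lem:monomials-comm} and then to apply the commuting-generators criterion of Lemma~\ref{lem:generators-comm-decomp}. First I would observe that the submodule of $H_d^2$ generated by a set of monomials is precisely the closure of the monomial ideal of $\mathcal{P}_d$ generated by those monomials, and that by Dickson's lemma (equivalently, the Hilbert basis theorem) this ideal is generated by a finite subset of monomials. Replacing the original generating set of $N_i$ by such a finite subset changes nothing, so I may assume that each $N_i$ is generated by finitely many monomials $z^{\lambda_{i,1}},\ldots,z^{\lambda_{i,m_i}}$.

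Next, for $1\le i\le n$ set
\[
A_i=\sum_{k=1}^{m_i}M_{z^{\lambda_{i,k}}}M_{z^{\lambda_{i,k}}}^{*}.
\]
Lemma~\ref{lem:monomials-comm} asserts that $M_{z^{\lambda}}M_{z^{\lambda}}^{*}$ and $M_{z^{\mu}}M_{z^{\mu}}^{*}$ commute for any two multi-indices $\lambda,\mu$, so every summand of $A_i$ commutes with every summand of $A_j$; by bilinearity of the commutator this gives $A_iA_j=A_jA_i$ for all $i,j$. Thus the operators $A_1,\ldots,A_n$ commute, and Lemma~\ref{lem:generators-comm-decomp} immediately yields that $\left\{N_1,\ldots,N_n\right\}$ is perpendicular.

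I do not expect a genuine obstacle here: the analytic content is entirely packaged in Lemmas~\ref{lem:monomials-comm} and~\ref{lem:generators-comm-decomp}, and the only point needing a word of comment is the reduction to finitely many monomial generators, which is immediate from Dickson's lemma. If one prefers to avoid even this reduction, one can argue directly that the range projections of all the operators $M_{z^{\lambda}}M_{z^{\lambda}}^{*}$ commute, hence generate an abelian von Neumann algebra containing every $P_{N_i}$ (each such projection being a join of range projections of generators), and then appeal to Proposition~\ref{prop:perpendicular-iff-projections-commuting}.
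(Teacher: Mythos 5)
Your argument is correct and is essentially the paper's own proof, which likewise deduces the statement immediately from Lemma~\ref{lem:generators-comm-decomp} together with Lemma~\ref{lem:monomials-comm}. The only additions are the (harmless and valid) reduction to finitely many monomial generators via Dickson's lemma and the explicit bilinearity check that the sums $A_i$ commute, both of which the paper leaves implicit.
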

\begin{proof}
This follows immediately from Lemma \ref{lem:generators-comm-decomp}
and Lemma \ref{lem:monomials-comm}.
\end{proof}

\begin{prop}
\label{prop:disjoint-vars-family-perpendicular}Let $N_{1},\ldots,N_{n}$
be submodules of $H_{d}^{2}$ generated by sets of homogeneous polynomials
$F_{1},\ldots,F_{n}$ respectively. Suppose that there are disjoint
subsets $Z_{1},\ldots,Z_{n}$ of $\left\{ z_{1},\ldots,z_{d}\right\} $
such that
\[
F_{i}\subseteq\mathbb{C}\left[Z_{i}\right],\quad1\leq i\leq n.
\]
Then the family $\left\{ N_{1},\ldots,N_{n}\right\} $ is perpendicular.\end{prop}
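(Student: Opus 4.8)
The plan is to apply Lemma~\ref{lem:generators-comm-decomp}, which reduces the perpendicularity of $\left\{N_{1},\ldots,N_{n}\right\}$ to the mutual commutativity of the operators
\[
A_{i}=M_{p_{i1}}M_{p_{i1}}^{*}+\ldots+M_{p_{im_{i}}}M_{p_{im_{i}}}^{*},\quad1\leq i\leq n,
\]
where $p_{i1},\ldots,p_{im_{i}}$ is a \emph{finite} set of polynomials generating $N_{i}$. So the first step is to observe that we may take each generating set $F_{i}$ to be finite and to consist of homogeneous polynomials still lying in $\mathbb{C}\left[Z_{i}\right]$: the submodule $N_{i}$ is the closure in $H_{d}^{2}$ of the ideal of $\mathcal{P}_{d}$ generated by $F_{i}$, this ideal is generated by the ideal of $\mathbb{C}\left[Z_{i}\right]$ generated by $F_{i}$, and the latter ideal is homogeneous and hence, since $\mathbb{C}\left[Z_{i}\right]$ is Noetherian, generated by finitely many homogeneous polynomials $p_{i1},\ldots,p_{im_{i}}$ in $\mathbb{C}\left[Z_{i}\right]$. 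These polynomials then generate $N_{i}$ as an $H_{d}^{2}$ submodule.

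Next, for $i\neq k$ and for any choice of $j$ and $l$, the polynomials $p_{ij}$ and $p_{kl}$ are homogeneous and involve disjoint sets of variables, since $p_{ij}\in\mathbb{C}\left[Z_{i}\right]$, $p_{kl}\in\mathbb{C}\left[Z_{k}\right]$, and $Z_{i}\cap Z_{k}=\emptyset$. Lemma~\ref{lem:disjoint-comm} then gives that $M_{p_{ij}}M_{p_{ij}}^{*}$ and $M_{p_{kl}}M_{p_{kl}}^{*}$ commute. Summing over $j$ and $l$, it follows that $A_{i}$ and $A_{k}$ commute. Since $i$ and $k$ were arbitrary, the operators $A_{1},\ldots,A_{n}$ pairwise commute, so Lemma~\ref{lem:generators-comm-decomp} applies and the family $\left\{N_{1},\ldots,N_{n}\right\}$ is perpendicular.

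I do not expect any serious obstacle here: the entire content is the termwise commutation supplied by Lemma~\ref{lem:disjoint-comm}, which in turn rests on the Guo--Wang identity via Lemma~\ref{lem:simple-comm-criterion}. The only mildly delicate point is the passage to finite generating sets, which is needed because Lemma~\ref{lem:generators-comm-decomp} is phrased for finitely many generators; this is handled cleanly by the Hilbert basis theorem applied inside $\mathbb{C}\left[Z_{i}\right]$, and it does not affect the submodule $N_{i}$ that is generated.
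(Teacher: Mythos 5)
Your proof is correct and follows the paper's own route: the paper proves this proposition by citing Lemma \ref{lem:generators-comm-decomp} together with Lemma \ref{lem:disjoint-comm}, which is exactly the termwise commutation argument you give. Your additional remark about passing to finite homogeneous generating sets via the Hilbert basis theorem is a reasonable fleshing-out of a point the paper leaves implicit, not a departure from its approach.
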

\begin{proof}
This follows immediately from Lemma \ref{lem:generators-comm-decomp}
and Lemma \ref{lem:disjoint-comm}.
\end{proof}

We can strengthen Proposition \ref{prop:disjoint-vars-family-perpendicular}
using results of Carlini and Reznick. The following result is Lemma
3.1 in \cite{Rez93}.
\begin{prop}
[Rez93]Let $p_{1},\ldots,p_{n}$ be homogeneous polynomials in $\mathbb{C}[z]$.
If the sets
\[
\left\{ \partial_{z^{\alpha}}p_{i}\mid\left|\alpha\right|=\deg\left(p_{i}\right)-1,\ \alpha\in\mathbb{N}_{0}^{d}\right\} ,\quad1\leq i\leq n,
\]
are mutually orthogonal, then there is a unitary change of variables
such that the polynomials $p_{1},\ldots,p_{n}$ are polynomials in
disjoint variables.
\end{prop}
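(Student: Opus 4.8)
The plan is to reduce everything to one linear-algebra fact about forms. For a nonzero homogeneous polynomial $p$ of degree $k\geq1$, let $V(p)$ denote the span of the linear forms $\{\partial^{\alpha}p\mid|\alpha|=k-1\}$. The key fact is that $V(p)$ is the \emph{smallest} subspace $W$ of the space of linear forms with the property that $p$ can be written using only linear forms lying in $W$; equivalently, after identifying an orthonormal basis of $W$ with new coordinate variables, $p\in\mathbb{C}[W]$. First I would establish this. One direction is immediate: if $p$ is a polynomial in the linear forms from a subspace $W$, then so is every partial derivative of $p$, and the linear ones lie in $W$, whence $V(p)\subseteq W$. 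For the converse, suppose $V(p)\subseteq W$; after precomposing with a unitary change of variables I may assume $W=\operatorname{span}\{z_{1},\ldots,z_{m}\}$, since the whole theorem asserts the existence of such a change of variables and $V(\cdot)$, being built from the directional derivatives $D_{a}=\sum a_{i}\partial_{i}$, transforms covariantly. Then every $(k-1)$-st order derivative $\partial^{\gamma}p$ is a linear form in $z_{1},\ldots,z_{m}$, so $\partial_{j}\partial^{\gamma}p=0$ for all $j>m$ and all $|\gamma|=k-1$; that is, $\partial^{\delta}p=0$ for every $\delta$ of weight $k$ with support not contained in $\{1,\ldots,m\}$. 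Since the coefficients of the degree-$k$ form $p$ are precisely its $k$-th order derivatives (up to nonzero factorials), $p$ has no monomial involving $z_{j}$ for $j>m$, i.e. $p\in\mathbb{C}[z_{1},\ldots,z_{m}]$.

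Next I would translate the hypothesis into orthogonality of the subspaces $V(p_{1}),\ldots,V(p_{n})$. Restricted to the linear forms $z_{1},\ldots,z_{d}$, the $H_{d}^{2}$ inner product satisfies $\langle z_{i},z_{j}\rangle=\delta_{ij}$, so it is the standard Hermitian inner product on the space of linear forms, and a unitary change of variables is exactly a change of variables by a matrix unitary for this inner product. The hypothesis that the sets $\{\partial^{\alpha}p_{i}\mid|\alpha|=\deg(p_{i})-1\}$ are mutually orthogonal says precisely that every spanning vector of $V(p_{i})$ is orthogonal to every spanning vector of $V(p_{j})$ for $i\neq j$, so by sesquilinearity $V(p_{i})\perp V(p_{j})$ for $i\neq j$. (A nonzero form of degree $\geq1$ has $V(p)\neq 0$, and the cases where some $p_{i}$ is zero or constant are trivial, so I may assume each $V(p_{i})$ is a nonzero subspace.)

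Finally I would construct the change of variables. Choose an orthonormal basis $B_{i}$ of each $V(p_{i})$; since the $V(p_{i})$ are mutually orthogonal, $B_{1}\cup\cdots\cup B_{n}$ is an orthonormal set of linear forms, which I extend to an orthonormal basis $w_{1},\ldots,w_{d}$ of the space of all linear forms. This orthonormal basis \emph{is} a unitary change of variables from $z_{1},\ldots,z_{d}$ to $w_{1},\ldots,w_{d}$, and if $Z_{i}\subseteq\{w_{1},\ldots,w_{d}\}$ denotes the set of new variables coming from $B_{i}$, then $Z_{1},\ldots,Z_{n}$ are pairwise disjoint and $V(p_{i})=\operatorname{span}(Z_{i})$. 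Applying the key fact to $p_{i}$ in the $w$-coordinates with $W=\operatorname{span}(Z_{i})$ yields $p_{i}\in\mathbb{C}[Z_{i}]$, and since the $Z_{i}$ are disjoint, $p_{1},\ldots,p_{n}$ are polynomials in disjoint variables in these coordinates, as required.

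The main obstacle is the linear-algebra fact of the first paragraph, specifically the implication $V(p)\subseteq W\Longrightarrow p\in\mathbb{C}[W]$; everything after it is a Gram--Schmidt argument. The delicate point there is bookkeeping of how $V(p)$ and the condition "expressible using only linear forms from $W$" interact with changes of coordinates. I would handle this either by the covariance remark above, or, to sidestep transformation formulas entirely, by computing directly that $D_{a}p=0$ holds for all $a$ in $W^{\perp}$ exactly when $D_{a}$ annihilates every element of $V(p)$, which by linearity of $D_{a}$ on linear forms reduces to a statement about the bilinear pairing between $a$ and $V(p)$.
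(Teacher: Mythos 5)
The paper does not prove this proposition; it is quoted verbatim as Lemma 3.1 of [Rez93], so there is no in-paper argument to compare against. Your proof is correct and self-contained. The load-bearing step is the implication $V(p)\subseteq W\Longrightarrow p\in\mathbb{C}[W]$, and your verification of it is sound: for a degree-$k$ form the coefficient of $z^{\delta}$ is $\partial^{\delta}p/\delta!$, and writing $\delta=\gamma+e_{j}$ with $|\gamma|=k-1$ and $j$ indexing a variable outside $W$ kills that coefficient because $\partial^{\gamma}p$ lies in $\operatorname{span}\{z_{1},\ldots,z_{m}\}$. The two bookkeeping points you flag also check out: since $\langle z^{\alpha},z^{\beta}\rangle=\delta_{\alpha\beta}\,\alpha!/|\alpha|!$ gives $\langle z_{i},z_{j}\rangle=\delta_{ij}$, orthogonality of linear forms in $H_{d}^{2}$ is Euclidean orthogonality of coefficient vectors; and if $\tilde{p}(z)=p(Uz)$ then the $(k-1)$-st partials of $\tilde{p}$ span exactly the pullbacks under $U$ of those of $p$, with $U$ acting unitarily on coefficient vectors, so $V(\cdot)$ transforms covariantly and the reduction to $W=\operatorname{span}\{z_{1},\ldots,z_{m}\}$ is legitimate. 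Two minor remarks: the minimality half of your "key fact" ($p\in\mathbb{C}[W]\Rightarrow V(p)\subseteq W$) is never used, so you could omit it; and in your sketched alternative via directional derivatives, the pairing $D_{a}\ell$ is bilinear rather than sesquilinear, so "$a\in W^{\perp}$" would need a conjugate, but since your main argument avoids that route entirely this is harmless.
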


The following result is Proposition 1 in \cite{Car06}.
\begin{prop}
[Car06]Let $p_{1},\ldots,p_{n}$ be homogeneous polynomials in $\mathbb{C}[z]$.
If the sets
\[
\left\{ \nabla p_{i}\left(z\right)\mid z\in\mathbb{C}^{d}\right\} ,\quad1\leq i\leq n,
\]
are mutually orthogonal, where $\nabla p$ denotes the gradient of
$p$, then there is a unitary change of variables such that the polynomials
$p_{1},\ldots,p_{n}$ are polynomials in disjoint variables.
\end{prop}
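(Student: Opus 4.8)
The plan is to deduce this result directly from the preceding lemma of Reznick, by observing that the hypothesis here is nothing more than a reformulation of the hypothesis there. Concretely, for a homogeneous polynomial $p$ in $\mathcal{P}_d$ of degree $m\geq 1$, let $V_p$ denote the linear span in $\mathbb{C}^d$ of the set $\{\nabla p(z)\mid z\in\mathbb{C}^d\}$, and let $E_p$ denote the linear span, inside the space of linear forms in $\mathcal{P}_d$, of the set $\{\partial^{\alpha}p\mid|\alpha|=m-1,\ \alpha\in\mathbb{N}_0^d\}$. Identifying a linear form $\sum_k a_k z_k$ with its coefficient vector $(a_1,\ldots,a_d)$, and noting that the Drury-Arveson inner product restricts to the standard inner product on linear forms since $\langle z_i,z_j\rangle=\delta_{ij}$, we may also regard $E_p$ as a subspace of $\mathbb{C}^d$. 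The key claim is that $V_p=E_p$ for every homogeneous $p$.

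To prove the claim I would expand the degree-$(m-1)$ homogeneous polynomials $\partial_k p$ in their Taylor series, which gives
\[
\nabla p(z)=\sum_{|\gamma|=m-1}z^{\gamma}\,w_{\gamma},\qquad (w_\gamma)_k=\frac{1}{\gamma!}\,\partial^{\gamma+e_k}p,
\]
where $e_k$ is the $k$-th standard multi-index, so that each $(w_\gamma)_k$ is a constant. On the other hand, each $\partial^{\gamma}p$ with $|\gamma|=m-1$ is a linear form whose $k$-th coefficient is $\partial_k(\partial^{\gamma}p)=\partial^{\gamma+e_k}p$, so its coefficient vector is exactly $\gamma!\,w_\gamma$. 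Since the monomials $\{z^{\gamma}\mid|\gamma|=m-1\}$ are linearly independent as functions on $\mathbb{C}^d$, a linear functional on $\mathbb{C}^d$ annihilates $V_p$ if and only if it annihilates every $w_\gamma$; hence $V_p=\operatorname{span}\{w_\gamma\mid|\gamma|=m-1\}=E_p$, which is the claim.

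With the claim established, the proof is immediate. The hypothesis that the sets $\{\nabla p_i(z)\mid z\in\mathbb{C}^d\}$, $1\leq i\leq n$, are mutually orthogonal says precisely that the subspaces $V_{p_1},\ldots,V_{p_n}$ of $\mathbb{C}^d$ are pairwise orthogonal, and by the claim this is the same as saying that $E_{p_1},\ldots,E_{p_n}$ are pairwise orthogonal, i.e. that the sets $\{\partial^{\alpha}p_i\mid|\alpha|=\deg(p_i)-1\}$, $1\leq i\leq n$, are mutually orthogonal. This is exactly the hypothesis of Reznick's lemma, so that lemma supplies the desired unitary change of variables making $p_1,\ldots,p_n$ polynomials in disjoint variables.

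The only real work lies in the identity $V_p=E_p$, and within that the step needing a little care is passing from the statement that the vector-valued polynomial $z\mapsto\nabla p(z)$ takes all its values in a given subspace to the statement that the coefficient vectors $w_\gamma$ lie in that subspace; this is where linear independence of the monomials of degree $m-1$ is used. One also has to keep straight that orthogonality of sets of linear forms in $H_d^2$ corresponds to orthogonality of coefficient vectors in $\mathbb{C}^d$, but this is immediate from the normalization $\langle z_i,z_j\rangle=\delta_{ij}$. Everything else is bookkeeping, and no genuine obstacle is expected.
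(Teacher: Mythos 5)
Your argument is correct, but note that the paper does not prove this proposition at all: it is quoted verbatim as Proposition 1 of \cite{Car06}, just as the preceding statement is quoted as Lemma 3.1 of \cite{Rez93}, so there is no in-paper proof to compare against. What you have supplied is a genuine reduction: you show that Carlini's hypothesis is equivalent to Reznick's, so that the second cited result is a formal consequence of the first (modulo Reznick's lemma, which the paper also treats as a black box). The key identity $V_{p}=E_{p}$ is verified correctly: writing $\nabla p\left(z\right)=\sum_{\left|\gamma\right|=m-1}z^{\gamma}w_{\gamma}$ with $\left(w_{\gamma}\right)_{k}=\tfrac{1}{\gamma!}\partial^{\gamma+e_{k}}p$ gives $V_{p}\subseteq\operatorname{span}\left\{ w_{\gamma}\right\}$ immediately, the linear independence of the monomials of degree $m-1$ gives the reverse inclusion via annihilators, and the coefficient vector of the linear form $\partial^{\gamma}p$ is exactly $\gamma!\,w_{\gamma}$, so $\operatorname{span}\left\{ w_{\gamma}\right\} =E_{p}$. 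Your observation that the $H_{d}^{2}$ inner product restricted to linear forms is the standard Hermitian inner product on coefficient vectors (since $\left\langle z_{i},z_{j}\right\rangle =\delta_{ij}$) correctly translates orthogonality of the gradient sets in $\mathbb{C}^{d}$ into orthogonality of the derivative sets appearing in Reznick's hypothesis, and since orthogonality of sets is equivalent to orthogonality of their spans, the two hypotheses coincide. The only detail left implicit is the degenerate case of a constant $p_{i}$, for which both hypotheses are vacuous and the conclusion trivial; this is harmless. The one thing your derivation does not deliver is an independent proof of the underlying geometric fact, since everything is funneled through Reznick's lemma, but as a justification of the statement as used in this paper it is complete.
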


We immediately obtain the following two results.
\begin{prop}
\label{prop:disjoint-vars-family-perpendicular-1}Let $N_{1},\ldots,N_{n}$
be submodules of $H_{d}^{2}$ generated by sets of homogeneous polynomials
$F_{1},\ldots,F_{n}$ respectively. If the sets 
\[
\left\{ \partial^{\alpha}\left(p\right)\mid\left|\alpha\right|=\deg\left(p\right)-1,\ \alpha\in\mathbb{N}_{0}^{d},\ p\in F_{i}\right\} ,\quad1\leq i\leq n,
\]
are mutually orthogonal, then the family $\left\{ N_{1},\ldots,N_{n}\right\} $
is perpendicular.
\end{prop}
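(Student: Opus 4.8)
The plan is to reduce the statement to the disjoint–variables case already settled in Proposition~\ref{prop:disjoint-vars-family-perpendicular}. The key input is the classical ``concision'' fact underlying the results of Reznick and Carlini quoted above: a homogeneous polynomial $p\in\mathcal{P}_d$ of degree $m\ge 1$ lies in the subalgebra of $\mathcal{P}_d$ generated by \emph{any} linear subspace $W$ of the span of $z_1,\ldots,z_d$ that contains $\operatorname{span}\{\partial^\alpha p\mid |\alpha|=m-1\}$. (Here, since the Drury--Arveson inner product makes $z_1,\ldots,z_d$ an orthonormal basis, this span of linear forms is just $\mathbb{C}^d$ with its usual inner product.) This is the content of Lemma~3.1 of \cite{Rez93} (see also \cite{Car06}); if needed, I would reprove it directly by choosing an orthonormal basis $e_1,\ldots,e_d$ of $\mathbb{C}^d$ with $e_1,\ldots,e_k$ spanning $W$, writing $p$ in these coordinates, and observing that if some $e_\ell$ with $\ell>k$ occurred in a monomial of $p$, then differentiating $p$ a total of $m-1$ times along the rest of that monomial would produce a penultimate partial with nonzero $e_\ell$–component, contradicting that all such partials lie in $W$.

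Granting this, I would set $W_i=\operatorname{span}\{\partial^\alpha p\mid |\alpha|=\deg(p)-1,\ \alpha\in\mathbb{N}_0^d,\ p\in F_i\}$ for $1\le i\le n$. The hypothesis says exactly that $W_1,\ldots,W_n$ are mutually orthogonal subspaces of $\mathbb{C}^d$. Choose orthonormal bases for each $W_i$ and combine them with an orthonormal basis of $(W_1\vee\cdots\vee W_n)^{\perp}$ to obtain an orthonormal basis $e_1,\ldots,e_d$ of $\mathbb{C}^d$, and let $Z_1,\ldots,Z_n\subseteq\{1,\ldots,d\}$ be the disjoint index sets with $W_i=\operatorname{span}\{e_j\mid j\in Z_i\}$. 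By the concision fact, every $p\in F_i$ lies in $\mathbb{C}[e_j:j\in Z_i]$. Now I would invoke the $U(d)$–equivariance of the $d$-shift: the unitary $U$ on $H_d^2$ induced by the change of variables carrying $z_1,\ldots,z_d$ to $e_1,\ldots,e_d$ conjugates $(M_{z_1},\ldots,M_{z_d})$ to the $d$-shift in the new coordinates, carries each $N_i$ to the submodule generated by $\{Up\mid p\in F_i\}\subseteq\mathbb{C}[z_j:j\in Z_i]$, and carries $P_{N_1},\ldots,P_{N_n}$ to their conjugates $UP_{N_i}U^{*}$. Since, by Proposition~\ref{prop:perpendicular-iff-projections-commuting}, perpendicularity of a family is equivalent to the commutativity of the associated projections --- a unitarily invariant condition --- it is enough to establish perpendicularity of the rotated family, and this is immediate from Proposition~\ref{prop:disjoint-vars-family-perpendicular}, whose hypothesis now holds with the disjoint sets $Z_1,\ldots,Z_n$.

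The hard part is really the concision fact together with the verification that a $U(d)$ change of variables induces a unitary on $H_d^2$ interacting correctly with submodules and with the polynomial subalgebras $\mathbb{C}[z_j:j\in Z_i]$; both are standard, but they carry the whole argument, and everything else is bookkeeping. I would also note the minor point that if some $F_i$ contains a constant (or $0$) then the corresponding $N_i$ is all of $H_d^2$ (or $\{0\}$) and $P_{N_i}$ commutes with everything, so such members may be discarded at the outset.
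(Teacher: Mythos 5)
Your proposal is correct and follows exactly the route the paper intends: the paper derives this proposition ``immediately'' from the quoted Reznick lemma (unitary change of variables to disjoint variables) together with Proposition~\ref{prop:disjoint-vars-family-perpendicular} and the unitary invariance of commuting projections. You have simply filled in the details of that same reduction, including a correct sketch of the concision fact and the harmless handling of constant generators.
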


\begin{prop}
\label{prop:disjoint-vars-family-perpendicular-2}Let $N_{1},\ldots,N_{n}$
be submodules of $H_{d}^{2}$ generated by sets of homogeneous polynomials
$F_{1},\ldots,F_{n}$ respectively. If the sets
\[
\left\{ \left(\nabla p\right)\left(\lambda\right)\mid\lambda\in\mathbb{C}^{d},\ p\in F_{i}\right\} ,\quad1\leq i\leq n,
\]
are mutually orthogonal, then the family $\left\{ N_{1},\ldots,N_{n}\right\} $
is perpendicular.
\end{prop}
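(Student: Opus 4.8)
The plan is to reduce the statement to Proposition~\ref{prop:disjoint-vars-family-perpendicular} by applying the unitary change of variables furnished by the result of Carlini quoted above. For each $i$ I would pass from the set $F_i$ to the linear subspace $V_i\subseteq\mathbb{C}^d$ spanned by the vectors $\left\{\left(\nabla p\right)\left(\lambda\right)\mid\lambda\in\mathbb{C}^d,\ p\in F_i\right\}$; the hypothesis says precisely that $V_1,\ldots,V_n$ are mutually orthogonal. Although Carlini's result is stated for a finite list of individual homogeneous polynomials, its proof is really an argument about the linear span of the gradients, so it applies here and produces a single unitary $U$ on $\mathbb{C}^d$ together with pairwise disjoint subsets $Z_1,\ldots,Z_n$ of $\left\{z_1,\ldots,z_d\right\}$ such that $p\circ U\in\mathbb{C}\left[Z_i\right]$ for every $p\in F_i$ and every $1\le i\le n$.

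Next I would transport this change of variables to the Drury--Arveson space. Since $U$ is unitary, the composition operator $\Gamma$ on $H_d^2$ defined by $\left(\Gamma f\right)\left(z\right)=f\left(Uz\right)$ is unitary (the Drury--Arveson kernel is invariant under $\mathrm{U}(d)$), and $\Gamma$ restricts to an algebra automorphism of $\mathcal{P}_d$, hence carries submodules of $H_d^2$ to submodules; in particular it maps $N_i$ onto the submodule $\widetilde N_i$ generated by $\left\{p\circ U\mid p\in F_i\right\}$. By the first paragraph the generators of $\widetilde N_i$ lie in $\mathbb{C}\left[Z_i\right]$ with the $Z_i$ pairwise disjoint, so Proposition~\ref{prop:disjoint-vars-family-perpendicular} shows that the family $\left\{\widetilde N_1,\ldots,\widetilde N_n\right\}$ is perpendicular.

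Finally I would descend back to the original submodules. By Proposition~\ref{prop:perpendicular-iff-projections-commuting} the perpendicularity of $\left\{\widetilde N_1,\ldots,\widetilde N_n\right\}$ means that the projections $P_{\widetilde N_i}$ commute; since $P_{\widetilde N_i}=\Gamma P_{N_i}\Gamma^{*}$ and $\Gamma$ is unitary, the projections $P_{N_i}$ commute as well, so $\left\{N_1,\ldots,N_n\right\}$ is perpendicular by Proposition~\ref{prop:perpendicular-iff-projections-commuting} again. There is no deep obstacle in this argument; the only points requiring a little care are the observation that Carlini's result remains valid for the sets $F_i$ (because its proof concerns the span of gradients, not individual polynomials) and the fact that a linear change of variables on $\mathbb{C}^d$ is unitarily implemented on $H_d^2$, which is exactly what makes commutativity of the projections — hence perpendicularity — invariant under it.
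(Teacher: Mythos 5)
Your proposal is correct and follows essentially the same route as the paper, which derives this proposition "immediately" from Carlini's result together with Proposition \ref{prop:disjoint-vars-family-perpendicular}; you have simply made explicit the two steps the paper leaves implicit, namely that Carlini's argument applies to the spans of the gradient sets and that a unitary change of variables on $\mathbb{C}^{d}$ is unitarily implemented on $H_{d}^{2}$, so that commutativity of the projections, and hence perpendicularity, is preserved.
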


\subsection{Perpendicularity and tensor products}

The results obtained in Section \ref{sub:criteria-for-perpendicularity}
and Section \ref{sub:perp-submodules-rank-one} only apply to submodules
of $H_{d}^{2}$. Because we also need to consider higher-rank submodules
of $rH_{d}^{2}$, in this section we consider tensor products of perpendicular
submodules. 
\begin{thm}
\label{thm:angle-tensored-perpendicular-submodules}Let $\left\{ N_{1},\ldots,N_{n}\right\} $
be a perpendicular family of submodules of $H_{d}^{2}$, and let $V_{1},\ldots V_{n}$
be arbitrary subspaces of $\mathbb{C}^{r}$. Let $M_{1},\ldots,M_{n}$
denote the $rH_{d}^{2}$ submodules
\[
M_{i}=N_{i}\otimes V_{i},\quad1\leq i\leq n.
\]
Then the algebraic sum $M_{1}+\ldots+M_{n}$ is closed.
\end{thm}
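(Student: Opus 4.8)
The plan is to reduce the statement, via Proposition \ref{prop:friedrichs-number-equiv-conditions}, to showing that the positive operator $P_{M_{1}}+\cdots+P_{M_{n}}$ on $rH_{d}^{2}$ has closed range, and then to compute this operator explicitly by exploiting the commutativity of $P_{N_{1}},\ldots,P_{N_{n}}$ guaranteed by perpendicularity.

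First I would record two elementary observations. Since $\mathbb{C}^{r}$ is finite-dimensional, each $V_{i}$ is closed, and the projection onto $M_{i}=N_{i}\otimes V_{i}$ is $P_{M_{i}}=P_{N_{i}}\otimes P_{V_{i}}$, where $P_{V_{i}}$ denotes the projection of $\mathbb{C}^{r}$ onto $V_{i}$. Second, by Proposition \ref{prop:perpendicular-iff-projections-commuting} the projections $P_{N_{1}},\ldots,P_{N_{n}}$ commute, so they generate a commutative von Neumann algebra $\mathcal{A}$, which is finite-dimensional because it is generated by finitely many commuting projections. Let $Q_{1},\ldots,Q_{m}$ be the minimal nonzero projections of $\mathcal{A}$ -- concretely, the nonzero operators $\prod_{i\in A}P_{N_{i}}\prod_{i\notin A}\left(I-P_{N_{i}}\right)$ for $A\subseteq\left\{1,\ldots,n\right\}$, so that $m\le 2^{n}$. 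These are mutually orthogonal, sum to the identity of $H_{d}^{2}$, and each $P_{N_{i}}$ equals $\sum_{k\in S_{i}}Q_{k}$ for an appropriate subset $S_{i}\subseteq\left\{1,\ldots,m\right\}$.

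Next I would compute
\[
P_{M_{1}}+\cdots+P_{M_{n}}=\sum_{i=1}^{n}\sum_{k\in S_{i}}Q_{k}\otimes P_{V_{i}}=\sum_{k=1}^{m}Q_{k}\otimes A_{k},\qquad A_{k}=\sum_{i\,:\,k\in S_{i}}P_{V_{i}},
\]
where each $A_{k}$ is a positive operator on the finite-dimensional space $\mathbb{C}^{r}$. With respect to the orthogonal decomposition $rH_{d}^{2}=\bigoplus_{k=1}^{m}\left(Q_{k}H_{d}^{2}\otimes\mathbb{C}^{r}\right)$, the operator $\sum_{k}Q_{k}\otimes A_{k}$ is block diagonal, acting on the $k$-th summand as $I\otimes A_{k}$. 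Since $\mathbb{C}^{r}$ is finite-dimensional, each nonzero $A_{k}$ has a smallest nonzero eigenvalue $\lambda_{k}>0$; as there are only finitely many blocks, $\lambda:=\min\left\{\lambda_{k}:A_{k}\ne 0\right\}$ is strictly positive (and if every $A_{k}$ vanishes the operator is $0$ and trivially has closed range). Hence $P_{M_{1}}+\cdots+P_{M_{n}}$ is bounded below by $\lambda$ on the orthogonal complement of its kernel, so its range is closed, and Proposition \ref{prop:friedrichs-number-equiv-conditions} yields $c\left(M_{1},\ldots,M_{n}\right)<1$.

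I expect the only delicate point to be the final step: a direct sum of operators with closed range need not have closed range, since the spectral gaps at $0$ may shrink to zero. What makes it work here is that there are only finitely many atoms $Q_{k}$ (at most $2^{n}$) and that $\mathbb{C}^{r}$ is finite-dimensional, so the spectral gaps of the $A_{k}$ admit a common positive lower bound. The remaining steps are routine bookkeeping with commuting projections and tensor products.
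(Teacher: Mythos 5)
Your proof is correct, and it takes a genuinely different route from the paper's. The paper argues by induction on $n$: it introduces the telescoping partition of unity $Q_{0}=P_{N_{1}}\cdots P_{N_{n}}\otimes I$, $Q_{1}=P_{N_{1}}^{\perp}\otimes I$, $Q_{2}=P_{N_{1}}P_{N_{2}}^{\perp}\otimes I,\ldots$, shows $Q_{i}\sum_{j}P_{M_{j}}=Q_{i}\sum_{j\ne i}P_{M_{j}}$ for $i\geq1$ and $Q_{0}\sum_{j}P_{M_{j}}=Q_{0}\left(I\otimes\sum_{j}E_{j}\right)$, and then invokes the induction hypothesis on each of the $n+1$ blocks before concluding that a finite orthogonal direct sum of closed-range operators has closed range. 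You instead pass directly to the $2^{n}$ atoms of the Boolean algebra generated by the commuting projections $P_{N_{1}},\ldots,P_{N_{n}}$, write $P_{N_{i}}=\sum_{k\in S_{i}}Q_{k}$, and obtain the explicit block-diagonal form $\sum_{k}Q_{k}\otimes A_{k}$ with $A_{k}=\sum_{i:k\in S_{i}}P_{V_{i}}$ acting on the finite-dimensional space $\mathbb{C}^{r}$. All the steps check out: the atoms do sum to the identity, $P_{M_{i}}=P_{N_{i}}\otimes P_{V_{i}}$ is the correct projection, and the uniform spectral gap $\lambda=\min_{k}\lambda_{k}>0$ over the finitely many nonzero blocks is exactly what is needed for closed range (your caution about direct sums is the right one to have, though it only bites for infinitely many blocks -- the paper's $n+1$ blocks are equally safe). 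What your approach buys is the elimination of the induction and an explicit quantitative spectral gap for $P_{M_{1}}+\ldots+P_{M_{n}}$, which via Proposition \ref{prop:formula-friedrichs-in-terms-projections} could in principle be converted into an explicit upper bound on $c\left(M_{1},\ldots,M_{n}\right)$; it is also the natural extension of the argument the paper uses for Proposition \ref{prop:perpendicularity-implies-small-friedrichs-num}. What the paper's induction buys is a coarser decomposition into only $n+1$ pieces rather than up to $2^{n}$.
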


\begin{proof}
Let $E_{1},\ldots,E_{n}$ denote the projections onto $V_{1},\ldots,V_{n}$
respectively. Then it's clear that
\[
P_{M_{i}}=P_{N_{i}}\otimes E_{i},\quad1\leq i\leq n.
\]
We will prove that the operator 
\[
P_{M_{1}}+\ldots+P_{M_{n}}
\]
has closed range. By Lemma \ref{lem:vec-sum-closed-iff-proj-sum-closed},
this will imply the desired result.

We proceed by induction on $n$, the size of the family $\left\{ N_{1},\ldots,N_{n}\right\} $.
For $n=1$, the result is trivially true. Therefore, suppose that
$n\geq2$ with the induction hypothesis that the result is true for
a perpendicular family of submodules of $H_{d}^{2}$ if the size of
the family is at most $n-1$.

Let $Q_{0},\ldots,Q_{n}$ denote the projections onto $rH_{d}^{2}$
defined by 
\begin{eqnarray*}
Q_{0} & = & P_{N_{1}}\cdots P_{N_{n}}\otimes I\\
Q_{1} & = & P_{N_{1}}^{\perp}\otimes I\\
Q_{2} & = & P_{N_{1}}P_{N_{2}}^{\perp}\otimes I\\
 & \vdots\\
Q_{n} & = & P_{N_{1}}\cdots P_{N_{n-1}}P_{N_{n}}^{\perp}\otimes I.
\end{eqnarray*}
Since the family $\left\{ N_{1},\ldots,N_{n}\right\} $ is perpendicular,
Proposition \ref{prop:perpendicular-iff-projections-commuting} implies
that the projections $P_{N_{1}},\ldots,P_{N_{n}}$ commute, and hence
that the projections $Q_{0},\ldots,Q_{n}$ also commute. It's also
clear that 
\begin{equation}
Q_{i}P_{M_{j}}=P_{M_{j}}Q_{i},\quad1\leq i,j\leq n.\label{eq:prop-angle-tensored-perpendicular-submodules-1}
\end{equation}
Furthermore, by construction the projections $Q_{0},\ldots,Q_{n}$
are orthogonal,
\begin{equation}
Q_{i}Q_{j}=0,\quad0\leq i<j\leq n,\label{eq:prop-angle-tensored-perpendicular-submodules-2}
\end{equation}
and we can decompose the identity operator on $rH_{d}^{2}$ as
\begin{equation}
I=Q_{0}+Q_{1}+Q_{2}+\ldots+Q_{n}.\label{eq:prop-angle-tensored-perpendicular-submodules-3}
\end{equation}
Therefore, by (\ref{eq:prop-angle-tensored-perpendicular-submodules-1}),
(\ref{eq:prop-angle-tensored-perpendicular-submodules-2}) and (\ref{eq:prop-angle-tensored-perpendicular-submodules-3}),
we can write
\begin{eqnarray}
P_{M_{1}}+\ldots+P_{M_{n}} & = & \left(\sum_{i=0}^{n}Q_{i}\right)\left(\sum_{j=1}^{n}P_{M_{j}}\right)\left(\sum_{i=0}^{m}Q_{i}\right)\label{eq:prop-angle-tensored-perpendicular-submodules-4}\\
 & = & \sum_{i=0}^{n}\sum_{j=0}^{n}Q_{i}P_{M_{j}}.\nonumber 
\end{eqnarray}
Now, for $1\leq j\leq n$,
\begin{align*}
Q_{0}P_{M_{j}} & =\left(P_{N_{1}}\cdots P_{N_{n}}\otimes I\right)\left(P_{N_{j}}\otimes E_{j}\right)\\
 & =P_{N_{1}}\cdots P_{N_{n}}\otimes E_{j}\\
 & =Q_{0}\left(I\otimes E_{j}\right),
\end{align*}
and this gives
\begin{equation}
Q_{0}\left(\sum_{j=1}^{n}P_{M_{j}}\right)=Q_{0}\sum_{j=1}^{n}\left(I\otimes E_{j}\right).\label{eq:prop-angle-tensored-perpendicular-submodules-5}
\end{equation}
By a similar calculation, for $1\leq i\leq n$,
\begin{align*}
Q_{i}P_{M_{i}} & =\left(P_{N_{1}}\cdots P_{N_{i-1}}P_{N_{i}}^{\perp}\otimes I\right)\left(P_{N_{i}}\otimes E_{i}\right)\\
 & =P_{N_{1}}\cdots P_{N_{i-1}}P_{N_{i}}^{\perp}P_{N_{i}}\otimes E_{i}\\
 & =0,
\end{align*}
and this gives 
\begin{equation}
Q_{i}\sum_{j=1}^{n}P_{M_{j}}=Q_{i}\sum_{\substack{j=1\\
j\ne i
}
}^{n}P_{M_{j}}.\label{eq:prop-angle-tensored-perpendicular-submodules-6}
\end{equation}
Let $X_{0}$ denote the operator on $rH_{d}^{2}$ defined by 
\[
X_{0}=I\otimes\sum_{j=1}^{n}E_{j},
\]
and let $X_{1},\ldots,X_{n}$ denote the operators on $H_{d}^{2}\otimes\mathbb{C}^{r}$
defined by 
\[
X_{i}=\sum_{\substack{j=1\\
j\ne i
}
}^{n}P_{M_{j}},\quad1\leq i\leq n.
\]
Then by the induction hypothesis and Lemma \ref{lem:vec-sum-closed-iff-proj-sum-closed},
the operators $X_{0},\ldots,X_{n}$ each have closed range, and by
(\ref{eq:prop-angle-tensored-perpendicular-submodules-4}), (\ref{eq:prop-angle-tensored-perpendicular-submodules-5})
and (\ref{eq:prop-angle-tensored-perpendicular-submodules-6}), we
can write
\[
P_{M_{1}}+\ldots+P_{M_{n}}=Q_{0}X_{0}+\ldots+Q_{n}X_{n}.
\]
Since the operators $X_{0},\ldots,X_{n}$ commute with the projections
$Q_{0},\ldots,Q_{n}$, it follows that the operators $Q_{0}X_{0},\ldots,Q_{n}X_{n}$
each have closed range. Therefore, since the projections $Q_{0},\ldots,Q_{n}$
are orthogonal, this means that we have written $P_{M_{1}}+\ldots+P_{M_{n}}$
as the direct sum of $n+1$ operators that each have closed range.
It follows that the range of this operator is also closed.
\end{proof}

\section{Essential normality}

\subsection{Essential normality and perpendicularity}

\begin{lem}
\label{lem:tensor-p-ess-norm-is-p-ess-norm}Let $N$ be a $p$-essentially
normal submodule of $H_{d}^{2}$ for $p>d$, and let $V$ be an arbitrary
subspace of $\mathbb{C}^{r}$. Then the $rH_{d}^{2}$ submodule $N\otimes V$
is also $p$-essentially normal. \end{lem}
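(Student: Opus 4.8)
The plan is to verify $p$-essential normality of $N\otimes V$ directly through condition (3) of Theorem~\ref{thm:arveson-ess-norm-equiv-conditions}, i.e. by checking that each commutator $M_{z_i}P_{N\otimes V}-P_{N\otimes V}M_{z_i}$ lies in $\mathcal{L}^{2p}$. The key observation is that everything factors through the tensor decomposition. If $E$ denotes the orthogonal projection of $\mathbb{C}^r$ onto $V$, then the projection onto $N\otimes V$ is $P_{N\otimes V}=P_N\otimes E$, and the coordinate multiplication operator on $rH_d^2$ is $M_{z_i}\otimes I$. Hence
\[
M_{z_i}P_{N\otimes V}-P_{N\otimes V}M_{z_i}=\left(M_{z_i}\otimes I\right)\left(P_N\otimes E\right)-\left(P_N\otimes E\right)\left(M_{z_i}\otimes I\right)=\left(M_{z_i}P_N-P_N M_{z_i}\right)\otimes E.
\]

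First I would record that, since $N$ is $p$-essentially normal, Theorem~\ref{thm:arveson-ess-norm-equiv-conditions} gives $M_{z_i}P_N-P_N M_{z_i}\in\mathcal{L}^{2p}$ for $1\leq i\leq d$. Then I would invoke the elementary fact that if $A\in\mathcal{L}^{2p}$ is an operator on $H_d^2$ and $E$ is a finite-rank operator on $\mathbb{C}^r$, then $A\otimes E$ belongs to $\mathcal{L}^{2p}$; indeed the singular values of $A\otimes E$ are the products of those of $A$ with those of $E$, so $\|A\otimes E\|_{2p}=\|A\|_{2p}\|E\|_{2p}$ and $\|E\|_{2p}=(\operatorname{rank} E)^{1/2p}<\infty$. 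Applying this with $A=M_{z_i}P_N-P_N M_{z_i}$ yields $M_{z_i}P_{N\otimes V}-P_{N\otimes V}M_{z_i}\in\mathcal{L}^{2p}$ for each $i$. Finally, another application of Theorem~\ref{thm:arveson-ess-norm-equiv-conditions}, this time to the submodule $N\otimes V$ of $rH_d^2$, shows that $N\otimes V$ is $p$-essentially normal.

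There is essentially no serious obstacle here: the content is entirely in the tensor factorization of the commutator together with the stability of the Schatten class $\mathcal{L}^{2p}$ under tensoring with a finite-rank operator. The only point requiring a moment's care is the bookkeeping identifying $P_{N\otimes V}$ with $P_N\otimes E$ and the coordinate operator on $rH_d^2$ with $M_{z_i}\otimes I$, which was already set up in Section~\ref{sub:finite-rank-d-shift}.
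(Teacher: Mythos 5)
Your proposal is correct and follows essentially the same route as the paper: identify $P_{N\otimes V}=P_N\otimes E$, factor the commutator as $\left(M_{z_i}P_N-P_NM_{z_i}\right)\otimes E$, and conclude via condition (3) of Theorem \ref{thm:arveson-ess-norm-equiv-conditions} together with the stability of $\mathcal{L}^{2p}$ under tensoring with a finite-rank projection. Your explicit singular-value justification of that last step is a small elaboration the paper leaves implicit, but the argument is the same.
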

\begin{proof}
Let $E$ denote the projection onto $V$, and let $M=N\otimes V$.
Then it's clear that
\[
P_{M}=P_{N}\otimes E.
\]
By Theorem \ref{thm:arveson-ess-norm-equiv-conditions}, the $p$-essential
normality of $N$ implies that the projection $P_{N}$ $2p$-essentially
commutes with the coordinate multiplication operators $M_{z_{1}},\ldots,M_{z_{d}}$
on $H_{d}^{2}$, i.e.
\[
M_{z_{i}}P_{N}-P_{N}M_{z_{i}}\in\mathcal{L}^{2p},\quad1\leq i\leq d,
\]
where $\mathcal{L}^{2p}$ denotes the set of Schatten $2p$-class
operators on $H_{d}^{2}$. Recall from Section \ref{sub:finite-rank-d-shift}
that we can write the coordinate multiplication operators on $rH_{d}^{2}$
as $M_{z_{1}}\otimes I,\ldots,M_{z_{d}}\otimes I$. Hence for $1\leq i\leq d$,
\begin{align*}
\left(M_{z_{i}}\otimes I\right)P_{M}-P_{M}\left(M_{z_{i}}\otimes I\right) & =\left(M_{z_{i}}\otimes I\right)\left(P_{N}\otimes E\right)-\left(P_{N}\otimes E\right)\left(M_{z_{i}}\otimes I\right)\\
 & =\left(M_{z_{i}}P_{N}-P_{N}M_{z_{i}}\right)\otimes E\in\mathcal{L}^{2p},
\end{align*}
since $E$ is a finite rank projection. Therefore, by Theorem \ref{thm:arveson-ess-norm-equiv-conditions},
$M$ is $2p$-essentially normal.
\end{proof}

\begin{thm}
\label{thm:ess-norm-of-perp-submodules}Let $\{N_{1},\ldots,N_{n}\}$
be a perpendicular family of $p$-essentially decomposable submodules
of $H_{d}^{2}$ for $p>d$, and let $V_{1},\ldots V_{n}$ be arbitrary
subspaces of $\mathbb{C}^{r}$. Let $M_{1},\ldots,M_{n}$ denote the
$rH_{d}^{2}$ submodules
\[
M_{i}=N_{i}\otimes V_{i},\quad1\leq i\leq n,
\]
Then the $rH_{d}^{2}$ submodule $\overline{M_{1}+\ldots+M_{n}}$
is also $p$-essentially normal.\end{thm}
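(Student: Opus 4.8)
The plan is to show that $M_1\vee\ldots\vee M_n$ is itself $p$-essentially decomposable and then to invoke Theorem \ref{thm:ess-decomp-implies-ess-norm}. The first step is to simplify the hypothesis on the $N_i$: since each $N_i$ is $p$-essentially decomposable, Theorem \ref{thm:ess-decomp-implies-ess-norm} already tells us that each $N_i$ is $p$-essentially normal. Applying Lemma \ref{lem:tensor-p-ess-norm-is-p-ess-norm} with $V=V_i$, it follows that each tensored submodule $M_i=N_i\otimes V_i$ is a $p$-essentially normal submodule of $rH_d^2$. Thus $M_1\vee\ldots\vee M_n$ is the join of finitely many $p$-essentially normal submodules, and the only thing left to check is that this join coincides with the a priori merely algebraic vector sum $M_1+\ldots+M_n$.

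The second step settles exactly this point, and it is here that the perpendicularity hypothesis enters. Since $\left\{N_1,\ldots,N_n\right\}$ is a perpendicular family of submodules of $H_d^2$ and $V_1,\ldots,V_n$ are arbitrary subspaces of $\mathbb{C}^r$, Theorem \ref{thm:angle-tensored-perpendicular-submodules} applies verbatim and yields $c\left(M_1,\ldots,M_n\right)<1$; that is, the angle between $M_1,\ldots,M_n$ is positive. By the equivalence of conditions (1) and (2) in Proposition \ref{prop:friedrichs-number-equiv-conditions}, this forces the vector sum $M_1+\ldots+M_n$ to be closed, so that $M_1+\ldots+M_n=M_1\vee\ldots\vee M_n$.

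Combining the two steps, $M_1\vee\ldots\vee M_n=M_1+\ldots+M_n$ is a vector sum of the $p$-essentially normal submodules $M_1,\ldots,M_n$, hence is $p$-essentially decomposable. A final application of Theorem \ref{thm:ess-decomp-implies-ess-norm} then shows that $M_1\vee\ldots\vee M_n$ is $p$-essentially normal, as desired.

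Given the machinery already assembled, this argument is essentially a matter of bookkeeping, and I do not expect a genuine obstacle: the substantive content has been absorbed into Theorem \ref{thm:angle-tensored-perpendicular-submodules}, whose inductive proof supplies the closedness of the tensored sum, and into Theorem \ref{thm:ess-decomp-implies-ess-norm}, whose resolvent-integral argument converts a closed decomposition into essential normality. The one point that warrants care is the decision to work with the coarse submodules $M_i$ directly rather than expanding each $N_i$ into finer $p$-essentially normal summands and tensoring those: perpendicularity of $\left\{N_1,\ldots,N_n\right\}$ need not descend to such a finer family, so keeping the $M_i$ intact is precisely what allows Theorem \ref{thm:angle-tensored-perpendicular-submodules} to be applied without modification.
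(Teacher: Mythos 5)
Your proposal is correct and follows essentially the same route as the paper's proof: tensor to get $p$-essentially normal $M_i$ via Lemma \ref{lem:tensor-p-ess-norm-is-p-ess-norm}, use Theorem \ref{thm:angle-tensored-perpendicular-submodules} and Proposition \ref{prop:friedrichs-number-equiv-conditions} to identify the join with the algebraic sum, and conclude with Theorem \ref{thm:ess-decomp-implies-ess-norm}. Your explicit preliminary step (deducing $p$-essential normality of each $N_i$ from its $p$-essential decomposability) is a small clarification the paper leaves implicit.
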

\begin{proof}
By Lemma \ref{lem:tensor-p-ess-norm-is-p-ess-norm}, the submodules
$M_{1},\ldots,M_{n}$ are $p$-essentially normal. Let $M$ denote
the $rH_{d}^{2}$ submodule $M=\overline{M_{1}+\ldots+M_{n}}$. By
Theorem \ref{thm:angle-tensored-perpendicular-submodules}, we can
write $M=M_{1}+\ldots+M_{n}.$ It follows that $M$ is $p$-essentially
decomposable, and hence by Theorem \ref{thm:ess-decomp-implies-ess-norm}
that $M$ is $p$-essentially normal.
\end{proof}

\subsection{\label{sub:ess-norm-certain-submodules}Essential normality}

In this section, we establish our main results on the essential normality
of homogeneous submodules of $rH_{d}^{2}$. We will require Guo and
Wang's result, Theorem 2.2 from \cite{GW08}, about the essential
normality of singly generated homogeneous submodules.
\begin{thm}
[Guo-Wang]\label{thm:guo-wang-principal-submodules-ess-norm}Every
submodule of $rH_{d}^{2}$ generated by a single homogeneous polynomial
is $p$-essentially normal for every $p>d$.
\end{thm}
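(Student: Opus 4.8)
The plan is to verify condition (3) of Theorem~\ref{thm:arveson-ess-norm-equiv-conditions}: writing $\eta$ for the homogeneous generator, of degree $m$, and $P_M$ for the projection onto the submodule $M$ it generates, I would show that $M_{z_i}P_M-P_MM_{z_i}\in\mathcal{L}^{2p}$ for $1\le i\le d$ and every $p>d$. Write $\eta=\sum_{j}p_j\otimes e_j$ with each $p_j\in\mathcal{P}_d$ homogeneous of degree $m$, and form the module map $R\colon H_d^2\to rH_d^2$, $Rg=\sum_j (M_{p_j}g)\otimes e_j$. Then $R$ intertwines the coordinate shifts, $R$ is injective with $M=\overline{\operatorname{ran}R}$, and the whole situation is graded by homogeneous degree: $R$ carries the degree-$k$ homogeneous polynomials $H_k$ into the degree-$(k{+}m)$ component of $rH_d^2$, the projection $P_M$ is block diagonal along the (orthogonal) grading, and on each finite-dimensional component $R^*R=\sum_j M_{p_j}^*M_{p_j}$ is injective, hence invertible.

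First I would reduce the Schatten estimate to a decay estimate on the homogeneous components. Since $M$ is a submodule, $M_{z_i}P_M-P_MM_{z_i}=-P_MM_{z_i}(I-P_M)$, and this operator sends the degree-$k$ part of $rH_d^2$ into the degree-$(k{+}1)$ part, so its $\mathcal{L}^{2p}$-norm equals $\bigl(\sum_k\bigl\|P_MM_{z_i}(I-P_M)|_{\deg k}\bigr\|_{\mathcal{L}^{2p}}^{2p}\bigr)^{1/2p}$. The $k$-th block has range inside the degree-$(k{+}1)$ part of $M$, namely $R(H_{k+1-m})$, which has dimension $\dim H_{k+1-m}=O(k^{d-1})$; hence that block's $\mathcal{L}^{2p}$-norm to the power $2p$ is at most $O(k^{d-1})$ times its operator norm to the power $2p$. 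Consequently it suffices to prove the operator-norm bound $\bigl\|P_MM_{z_i}(I-P_M)|_{\deg k}\bigr\|=O(k^{-1/2})$, since then $\sum_k k^{d-1}k^{-p}<\infty$ for $p>d$. (When $r=1$ the degree-$k$ part of $M^\perp$ has dimension only $\dim H_k-\dim H_{k-m}=O(k^{d-2})$, so the same norm estimate already gives $\mathcal{L}^{2p}$-membership for $p>d-1$.)

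The heart of the argument is this norm bound: for $v$ in the degree-$k$ part of $M^\perp$ one must show $\|P_M(M_{z_i}v)\|\le Ck^{-1/2}\|v\|$. Write $P_M(M_{z_i}v)=Rt$ with $t=(R^*R|_{H_{k+1-m}})^{-1}R^*(M_{z_i}v)$, so that $\|P_M(M_{z_i}v)\|^2=\langle v,M_{z_i}^*(Rt)\rangle$. Using $M_{z_i}^*=\bigl([\tfrac1{N+1}]\partial_i\bigr)\otimes I$ together with the Leibniz rule gives $M_{z_i}^*(Rt)=\tfrac1{k+1}\bigl(\widetilde R_i t+R(\partial_i t)\bigr)$, where $\widetilde R_i$ is the module map built from the partial derivatives $\partial_i p_j$; the summand $R(\partial_i t)$ lies in $M$ and so is annihilated by $v$, which is what produces the factor $\tfrac1{k+1}$. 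I would then combine three ingredients. First, a lower bound $R^*R|_{H_l}\ge c\,l^{-m}$ (readable off from the Guo--Wang identity, or from the product-norm inequality $\|pq\|_{H_d^2}\ge\|p\|\,\|q\|\binom{\deg p+\deg q}{\deg p}^{-1/2}$), which gives $\|t\|\le C\,k^{m/2}\|Rt\|=C\,k^{m/2}\|P_M(M_{z_i}v)\|$. Second, the estimate that multiplication by the degree-$(m{-}1)$ polynomials $\partial_ip_j$ carries $H_l$ to within $O(l^{-(m-1)/2})$ of $M$, i.e.\ $\bigl\|(I-P_M)\widetilde R_i|_{H_l}\bigr\|=O(l^{-(m-1)/2})$, which bounds $\langle v,\widetilde R_it\rangle=\langle v,(I-P_M)\widetilde R_it\rangle$. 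Third, the Guo--Wang identity (Proposition~\ref{prop:guo-wang-ident}), used to establish the second ingredient by the same differentiation bookkeeping. Assembling these, $\|P_M(M_{z_i}v)\|^2$ is bounded by $\|v\|\,\|P_M(M_{z_i}v)\|$ times a product of powers of $k$ whose exponents combine to $-1-\tfrac{m-1}{2}+\tfrac m2=-\tfrac12$, so $\|P_M(M_{z_i}v)\|=O(k^{-1/2})\|v\|$, as required.

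The main obstacle is precisely the second and third ingredients. The naive estimate that merely combines the $\tfrac1{k+1}$ weight with the lower bound $R^*R|_{H_l}\gtrsim l^{-m}$ only yields $\|P_M(M_{z_i}v)\|=O(k^{m/2-1})\|v\|$, which fails to decay as soon as $m\ge2$; obtaining the sharp $O(k^{-1/2})$ forces one to exploit that the relevant vectors lie in the special subspaces $(\partial_ip_j)H_l$ rather than being arbitrary, and it is here that the Guo--Wang identity does the real work. I should also note that the vector-valued case ($r\ge2$) is the tightest, needing exactly $p>d$, whereas for a scalar generator $p\otimes\xi$ the submodule is $[p]\otimes\mathbb{C}\xi$ and its essential normality also follows from the scalar statement together with Lemma~\ref{lem:tensor-p-ess-norm-is-p-ess-norm}; the scalar statement is what the later applications actually use.
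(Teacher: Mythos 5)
First, note that the paper itself does not prove this statement: it is quoted as Theorem 2.2 of \cite{GW08}, so what you are really attempting is a reconstruction of Guo and Wang's argument. Your outer skeleton is sound and close in spirit to the known approach: reducing to condition (3) of Theorem \ref{thm:arveson-ess-norm-equiv-conditions}, observing that $M_{z_i}P_M-P_MM_{z_i}=-P_MM_{z_i}(I-P_M)$ raises degree by one so that its Schatten norm is the $\ell^{2p}$-sum of the block norms, bounding each block's $\mathcal{L}^{2p}$-norm by its rank $O(k^{d-1})$ times its operator norm, and thereby reducing everything to the estimate $\bigl\|P_MM_{z_i}(I-P_M)|_{\deg k}\bigr\|=O(k^{-1/2})$ --- all of this is correct, as is the identity $\|P_M(M_{z_i}v)\|^2=\frac{1}{k+1}\langle v,(I-P_M)\widetilde R_i t\rangle$ obtained from $M_{z_i}^*=\left[\frac{1}{N+1}\right]\partial_i$ and the Leibniz rule.

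However, your second ingredient --- the operator-norm bound $\bigl\|(I-P_M)\widetilde R_i|_{H_l}\bigr\|=O\bigl(l^{-(m-1)/2}\bigr)$ --- is false, and it is exactly the step you defer to ``the Guo--Wang identity'' without carrying out. Take $r=1$, $d=2$, $p=z_1z_2$ (so $m=2$) and $i=1$, so $\partial_1p=z_2$; for $s=z_2^l$ one has $\widetilde R_1s=z_2^{l+1}$, which is orthogonal to $M\cap H_{l+1}=z_1z_2H_{l-1}$ because distinct monomials are orthogonal, whence $\|(I-P_M)\widetilde R_1s\|=\|s\|=1$, not $O(l^{-1/2})$. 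The theorem's conclusion survives in this example only because your first ingredient, the lower bound $R^*R|_{H_l}\ge c\,l^{-m}$, is far from sharp for this $p$ (here $M_p^*M_p|_{H_l}\ge c\,l^{-1}$ rather than $c\,l^{-2}$): the two worst cases cannot occur simultaneously, so multiplying two separately-proved operator-norm bounds, as you propose, cannot close the argument. What is needed is a joint estimate on the specific vectors $t=(R^*R)^{-1}R^*(M_{z_i}v)$ with $v\in M^\perp$, and producing that estimate is precisely the hard analytic content of Guo and Wang's proof. As it stands your argument covers only the cases where both of your ingredients happen to hold (for instance $m=1$) and leaves the general case open.
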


The next result is well known. It was proved, for example, by Shalit
in \cite{Sha11}, using his results on stable division. The methods
introduced here provide a new proof.
\begin{thm}
\label{thm:submodule-lin-polys-ess-norm} Every submodule of $H_{d}^{2}$
generated by linear polynomials is $p$-essentially normal for every
$p>d$. \end{thm}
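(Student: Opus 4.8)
The plan is to realize an arbitrary submodule generated by linear polynomials as a sum of finitely many perpendicular, singly generated submodules, and then invoke the machinery already developed. So let $N$ be the $H_d^2$ submodule generated by a finite set $F$ of linear polynomials. First I would reduce to the case where $F$ is linearly independent: the span $V = \operatorname{span} F \subseteq \mathcal{P}_d^{(1)}$ (the degree-one homogeneous part) is a finite-dimensional subspace of linear forms, and $N$ is exactly the submodule generated by $V$, equivalently by any basis of $V$. Choosing an \emph{orthogonal} basis $\ell_1,\dots,\ell_k$ of $V$ with respect to the $H_d^2$ inner product, we have $N = N_1 \vee \dots \vee N_k$, where $N_i$ is the submodule generated by the single linear polynomial $\ell_i$.

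Next I would check that the family $\{N_1,\dots,N_k\}$ is perpendicular. Since the $\ell_i$ are mutually orthogonal linear polynomials, Proposition~\ref{prop:linear-family-perpendicular-criterion} applies directly and gives perpendicularity. (Alternatively one invokes Lemma~\ref{lem:linear-comm-criterion}, which says $M_{\ell_i}M_{\ell_i}^*$ and $M_{\ell_j}M_{\ell_j}^*$ commute precisely when $\ell_i = \ell_j$ or $\ell_i \perp \ell_j$, together with Lemma~\ref{lem:generators-comm-decomp}.) Each $N_i$ is singly generated by a homogeneous polynomial, so by the Guo--Wang theorem (Theorem~\ref{thm:guo-wang-principal-submodules-ess-norm}) each $N_i$ is $p$-essentially normal for every $p>d$, and in particular $p$-essentially decomposable.

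Now I would apply the perpendicularity results to conclude. By Proposition~\ref{prop:perpendicularity-implies-small-friedrichs-num}, $c(N_1,\dots,N_k) < 1$, so by Proposition~\ref{prop:friedrichs-number-equiv-conditions} the algebraic sum $N_1 + \dots + N_k$ is closed and equals $N = N_1 \vee \dots \vee N_k$. Hence $N$ is $p$-essentially decomposable, and Theorem~\ref{thm:ess-decomp-implies-ess-norm} gives that $N$ is $p$-essentially normal for every $p>d$. (One could equally invoke Theorem~\ref{thm:ess-norm-of-perp-submodules} with all $V_i = \mathbb{C}$ and $r=1$, which packages the last two steps.)

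The one point that needs genuine care — and which I expect to be the only real obstacle — is the passage to an orthogonal basis. It must be verified that replacing the generating set $F$ by an orthogonal basis of $\operatorname{span} F$ does not change the submodule it generates: this is the elementary fact that the submodule generated by a set of polynomials depends only on the ideal (or here, only on the linear span, since all generators are degree-one and the module action is by multiplication), so any set spanning the same subspace of linear forms generates the same submodule. Everything else is a direct citation of earlier results, so once this reduction is in hand the proof is immediate.
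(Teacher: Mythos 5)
Your proposal is correct and follows essentially the same route as the paper: orthogonalize the linear generators (the paper simply invokes Gram--Schmidt), apply Guo--Wang to each singly generated piece, use Proposition~\ref{prop:linear-family-perpendicular-criterion} for perpendicularity, and conclude via Theorem~\ref{thm:ess-norm-of-perp-submodules}. The point you flag about the submodule depending only on the span of the linear generators is the same (routine) justification implicit in the paper's Gram--Schmidt step.
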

\begin{proof}
Let $N$ be a submodule of $H_{d}^{2}$ generated by linear polynomials
$p_{1},\ldots,p_{n}$ in $\mathbb{C}[z]$ . By applying the Gram-Schmidt
process if necessary, we can assume that the set $\{p_{1},\dots,p_{n}\}$
is orthogonal in $H_{d}^{2}$. Let $N_{1},\ldots,N_{n}$ denote the
$H_{d}^{2}$ submodules generated by $p_{1},\ldots,p_{n}$ respectively.
Then Theorem \ref{thm:guo-wang-principal-submodules-ess-norm} implies
that these submodules are each $p$-essentially normal for every $p>d$,
and Proposition \ref{prop:linear-family-perpendicular-criterion}
implies that the family $\{N_{1},\ldots,N_{n}\}$ is perpendicular.
Note that $N=\overline{N_{1}+\ldots+N_{n}}$. Hence by Theorem  \ref{thm:ess-norm-of-perp-submodules},
$N$ is also $p$-essentially normal for every $p>d$. 
\end{proof}

The next result is new. It establishes the essential normality of
submodules of $rH_{d}^{2}$ that are generated by certain linear polynomials.
We note Arveson's result from \cite{Arv07} that the problem of the
essential normality of homogeneous submodules of $rH_{d}^{2}$ is
equivalent to the problem of the essential normality of submodules
of $rH_{d}^{2}$ generated by arbitrary linear polynomials. 
\begin{thm}
\label{thm:ortho-linear-poly-ess-decomp}Let $F_{1},\ldots,F_{n}$
be mutually orthogonal sets of linear polynomials, and let $X_{1},\ldots,X_{n}$
be arbitrary sets of vectors in $\mathbb{C}^{r}$. Then the $rH_{d}^{2}$
submodule generated by the set of vector-valued polynomials
\begin{equation}
\{p\otimes\xi\mid p\in F_{i},\ \xi\in X_{i},\ 1\leq i\leq n\}\label{eq:thm-ortho-linear-poly-ess-decomp-1}
\end{equation}
is $p$-essentially normal for every $p>d$.\end{thm}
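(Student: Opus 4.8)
The plan is to realize the submodule in question as a join $M_1\vee\ldots\vee M_n$ of tensor-product submodules $M_i=N_i\otimes V_i$, where each $N_i$ is the $H_d^2$ submodule generated by $F_i$ and each $V_i$ is the subspace of $\mathbb{C}^r$ spanned by $X_i$, and then to apply Theorem~\ref{thm:ess-norm-of-perp-submodules}. First I would record the two inputs needed to invoke that theorem. Since $F_1,\ldots,F_n$ are mutually orthogonal sets of linear polynomials, Proposition~\ref{prop:linear-family-perpendicular-criterion} shows that $\{N_1,\ldots,N_n\}$ is a perpendicular family of submodules of $H_d^2$; and by Theorem~\ref{thm:submodule-lin-polys-ess-norm} each $N_i$ is $p$-essentially normal for every $p>d$, hence trivially $p$-essentially decomposable. (If some $F_i$ or $X_i$ is empty the corresponding $N_i$ or $V_i$ is zero, and the statement is unaffected.)

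Next I would identify the $rH_d^2$ submodule $M$ generated by the set in \eqref{eq:thm-ortho-linear-poly-ess-decomp-1} with $M_1\vee\ldots\vee M_n$. Because the join of the submodules generated by the sets $\{p\otimes\xi\mid p\in F_i,\ \xi\in X_i\}$, $1\le i\le n$, is the submodule generated by their union, it is enough to check that for each fixed $i$ the submodule generated by $\{p\otimes\xi\mid p\in F_i,\ \xi\in X_i\}$ equals $N_i\otimes V_i$. One inclusion is immediate, since $qp\in N_i$ and $\xi\in V_i$ whenever $q\in\mathcal{P}_d$, $p\in F_i$, $\xi\in X_i$. For the reverse inclusion, recall that $N_i$ is the closed linear span of $\{qp\mid q\in\mathcal{P}_d,\ p\in F_i\}$ and $V_i=\operatorname{span}(X_i)$, so an elementary tensor $f\otimes v$ with $f\in N_i$ and $v\in V_i$ is a limit of finite sums of elements of the form $(qp)\otimes\xi=q\cdot(p\otimes\xi)$, each of which lies in the submodule generated by $\{p\otimes\xi\mid p\in F_i,\ \xi\in X_i\}$; since that submodule is closed, $f\otimes v$ lies in it as well, and hence so does all of $N_i\otimes V_i$.

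Finally, with $M=M_1\vee\ldots\vee M_n$ and $M_i=N_i\otimes V_i$, Theorem~\ref{thm:ess-norm-of-perp-submodules} applied to the perpendicular family $\{N_1,\ldots,N_n\}$ of $p$-essentially decomposable submodules of $H_d^2$ and the subspaces $V_1,\ldots,V_n$ of $\mathbb{C}^r$ yields that $M$ is $p$-essentially normal for every $p>d$. There is no serious obstacle here: the essential-normality input (Guo--Wang, via Theorem~\ref{thm:submodule-lin-polys-ess-norm}), the perpendicularity criterion, and the stability of the angle under tensoring have all been established already, and the only point requiring any care is the routine bookkeeping in the previous paragraph identifying the generated submodule with $M_1\vee\ldots\vee M_n$.
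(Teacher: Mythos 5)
Your proposal is correct and follows essentially the same route as the paper: set $N_i$ equal to the $H_d^2$ submodule generated by $F_i$ and $V_i=\operatorname{span}(X_i)$, invoke Theorem~\ref{thm:submodule-lin-polys-ess-norm} and Proposition~\ref{prop:linear-family-perpendicular-criterion}, identify the generated submodule with $\left(N_1\otimes V_1\right)\vee\ldots\vee\left(N_n\otimes V_n\right)$, and apply Theorem~\ref{thm:ess-norm-of-perp-submodules}. The only difference is that you spell out the identification $M=M_1\vee\ldots\vee M_n$, which the paper simply asserts; that is a harmless and correct addition.
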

\begin{proof}
Let $N_{1},\ldots,N_{n}$ denote the $H_{d}^{2}$ submodules generated
by $F_{1},\ldots,F_{n}$ respectively, and let $V_{1},\ldots,V_{n}$
denote the $\mathbb{C}^{r}$ subspaces spanned by $X_{1},\ldots,X_{n}$
respectively. Let $M$ denote the $rH_{d}^{2}$ submodule generated
by the set (\ref{eq:thm-ortho-linear-poly-ess-decomp-1}), and let
$M_{1},\ldots,M_{n}$ denote the $rH_{d}^{2}$ submodules
\[
M_{i}=N_{i}\otimes V_{i},\quad1\leq i\leq n.
\]
Then Theorem \ref{thm:submodule-lin-polys-ess-norm} implies that
each of the submodules $N_{1},\ldots,N_{n}$ is $p$-essentially normal
for every $p>d$, and Proposition \ref{prop:linear-family-perpendicular-criterion}
implies that the family $\left\{ N_{1},\ldots,N_{n}\right\} $ is
perpendicular. Note that $M=\overline{M_{1}+\ldots+M_{n}}$. Hence
by Theorem \ref{thm:ess-norm-of-perp-submodules}, $M$ is also $p$-essentially
normal for every $p>d$. 
\end{proof}

The next theorem is Arveson's main result from \cite{Arv05}. Shalit
also gave a proof of this result in \cite{Sha11} using his results
on stable division. The methods introduced here provide a new and
simple proof. Recall that a monomial of $rH_{d}^{2}$ is an element
in $r\mathbb{C}[z]$ of the form $z^{\alpha}\otimes\xi$ for some
$\alpha$ in $\mathbb{N}_{0}^{d}$ and $\xi$ in $\mathbb{C}^{r}$. 
\begin{thm}
[Arveson]\label{thm:submodule-monomials-essdecomp}Every submodule
of $rH_{d}^{2}$ generated by monomials is $p$-essentially normal
for every $p>d$.\end{thm}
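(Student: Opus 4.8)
The plan is to recognise $M$ as a finite join $M_1\vee\ldots\vee M_n$ of tensor-product submodules $M_k=N_k\otimes V_k$, where each $N_k\subseteq H_d^2$ is generated by a single monomial, and then to invoke Theorem \ref{thm:ess-norm-of-perp-submodules} directly. All of the analytic content has already been packaged into the perpendicularity and tensor-stability results of Section \ref{sec:perpendicular-submodules}, so the proof amounts to checking that the hypotheses of Theorem \ref{thm:ess-norm-of-perp-submodules} are met.

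First I would reduce to the case of finitely many generators. Suppose $M$ is the $rH_d^2$ submodule generated by a set $G$ of monomials of the form $z^{\alpha}\otimes\xi$. The $\mathcal{P}_d$-submodule of $r\mathcal{P}_d$ generated by $G$ sits inside the Noetherian $\mathcal{P}_d$-module $r\mathcal{P}_d$, hence is finitely generated; expressing a finite generating set as $\mathcal{P}_d$-combinations of elements of $G$ and collecting the (finitely many) monomials of $G$ that appear shows that this algebraic submodule is already generated by finitely many monomials $z^{\alpha_1}\otimes\xi_1,\dots,z^{\alpha_n}\otimes\xi_n$ drawn from $G$. Passing to closures, $M$ is the $rH_d^2$ submodule generated by these $n$ monomials.

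Next, for $1\le k\le n$ let $N_k$ be the $H_d^2$ submodule generated by the single monomial $z^{\alpha_k}$, and let $V_k=\mathbb{C}\xi_k\subseteq\mathbb{C}^r$. Then the $rH_d^2$ submodule $M_k:=N_k\otimes V_k$ is exactly the submodule generated by $z^{\alpha_k}\otimes\xi_k$, so $M=M_1\vee\ldots\vee M_n$. Each $N_k$ is generated by a single homogeneous polynomial, so by the Guo--Wang theorem (Theorem \ref{thm:guo-wang-principal-submodules-ess-norm}) it is $p$-essentially normal for every $p>d$, and hence trivially $p$-essentially decomposable. Moreover, since each $N_k$ is generated by a monomial, Proposition \ref{prop:monomial-family-perpendicular} shows that $\{N_1,\dots,N_n\}$ is a perpendicular family. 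Applying Theorem \ref{thm:ess-norm-of-perp-submodules} to these submodules $N_1,\dots,N_n$ and subspaces $V_1,\dots,V_n$ then gives that $M=M_1\vee\ldots\vee M_n$ is $p$-essentially normal for every $p>d$, as desired.

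There is no genuine obstacle remaining in this argument: the real work has been done in establishing perpendicularity of monomial families (via the Guo--Wang identity and the commutation computation Lemma \ref{lem:monomials-comm}) and in the tensor-stability Theorem \ref{thm:angle-tensored-perpendicular-submodules} that underlies Theorem \ref{thm:ess-norm-of-perp-submodules}. The only step calling for a moment's care is the opening reduction to a finite monomial generating set; one could instead avoid it by proving a countable-family version of Theorem \ref{thm:ess-norm-of-perp-submodules}, but the Noetherian reduction is simpler and entirely sufficient here.
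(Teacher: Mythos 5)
Your proof is correct and follows essentially the same route as the paper: decompose the generated submodule as $M_1\vee\ldots\vee M_n$ with $M_k=N_k\otimes V_k$, get $p$-essential normality of each $N_k$ from Guo--Wang, perpendicularity from Proposition \ref{prop:monomial-family-perpendicular}, and conclude via Theorem \ref{thm:ess-norm-of-perp-submodules}. The only difference is your explicit Noetherian reduction to finitely many monomial generators, which the paper passes over by tacitly assuming a finite generating set.
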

\begin{proof}
Let $N$ be a submodule of $rH_{d}^{2}$ generated by monomials, say
$z^{\alpha_{1}}\otimes\xi_{1},\ldots,z^{\alpha_{n}}\otimes\xi_{n}$
in $r\mathbb{C}[z]$ for $\alpha_{1},\ldots,\alpha_{n}$ in $\mathbb{N}_{0}^{d}$.
Let $N_{1},\ldots,N_{n}$ denote the $rH_{d}^{2}$ submodules generated
by $z^{\alpha_{1}},\ldots,z^{\alpha_{n}}$ respectively, and let $V_{1},\ldots,V_{n}$
denote the one-dimensional subspaces of $\mathbb{C}^{r}$ spanned
by $\xi_{1},\ldots,\xi_{n}$ respectively. Then Theorem \ref{thm:guo-wang-principal-submodules-ess-norm}
implies that the submodules $N_{1},\ldots,N_{n}$ are $p$-essentially
normal for every $p>d$, and Proposition \ref{prop:monomial-family-perpendicular}
implies that the family $\{N_{1},\ldots,N_{n}\}$ is perpendicular.
Note that $N=\overline{N_{1}+\ldots+N_{n}}$. Hence by Theorem \ref{thm:ess-norm-of-perp-submodules},
$N$ is also $p$-essentially normal for every $p>d$.
\end{proof}

Recall Shalit's result from \cite{Sha11} that a submodule generated
by polynomials in two variables has the stable division property.
Shalit used this result to prove the next theorem that these submodules
are essentially normal. However, starting from Proposition \ref{prop:grobner-basis-implies-friedrichs-num},
we can also view Shalit's proof of the stable division property for
these submodules as a method for establishing essential decomposability.
In this case, the methods introduced here provide a new proof. 
\begin{thm}
[Shalit]\label{thm:shalit-ess-norm}Let $F$ be a set of homogeneous
polynomials. Suppose that there is a subset $Z$ of $\{z_{1},\ldots,z_{d}\}$,
of size at most $2$, such that $F\subseteq\mathbb{C}\left[Z\right]$.
Then the $H_{d}^{2}$ submodule generated by $F$ is $p$-essentially
normal for every $p>d$.\end{thm}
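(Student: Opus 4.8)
The plan is to realize the submodule generated by $F$ as a closed algebraic sum of singly generated submodules and then invoke Theorem \ref{thm:ess-decomp-implies-ess-norm}. Let $I$ denote the ideal of $\mathcal{P}_{d}$ generated by $F$, and let $M$ denote the closure of $I$ in $H_{d}^{2}$, which is exactly the submodule generated by $F$; we may assume $I\neq 0$, since otherwise $M=\{0\}$ is trivially essentially normal. Because every polynomial in $F$ is homogeneous and lies in $\mathbb{C}\left[Z\right]$, the set $J=I\cap\mathbb{C}\left[Z\right]$ is a homogeneous ideal of the polynomial ring $\mathbb{C}\left[Z\right]$ satisfying $J\cdot\mathcal{P}_{d}=I$. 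By the Hilbert basis theorem $J$ is finitely generated, and since the $S$-polynomial of two homogeneous polynomials is homogeneous and polynomial reduction preserves homogeneity, Buchberger's algorithm applied to a homogeneous generating set of $J$ produces a Groebner basis $\left\{p_{1},\ldots,p_{n}\right\}$ of $J$ consisting of homogeneous polynomials in $\mathbb{C}\left[Z\right]$. Since the variables outside $Z$ do not occur in any $p_{i}$, this set also generates $I$ in $\mathcal{P}_{d}$ and remains a Groebner basis there.

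Next I would pass to the $H_{d}^{2}$ submodules $N_{1},\ldots,N_{n}$ generated by $p_{1},\ldots,p_{n}$ respectively. Guo and Wang's Theorem \ref{thm:guo-wang-principal-submodules-ess-norm} shows that each $N_{i}$ is $p$-essentially normal for every $p>d$. As $\left\{p_{1},\ldots,p_{n}\right\}$ generates $I$ as an ideal, its closure in $H_{d}^{2}$ equals $M$, so $M=N_{1}\vee\ldots\vee N_{n}$.

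The key step is to show that the algebraic sum $N_{1}+\ldots+N_{n}$ is already closed, so that $M=N_{1}+\ldots+N_{n}$. This is precisely what Proposition \ref{prop:grobner-basis-implies-friedrichs-num} provides: the family $\left\{N_{1},\ldots,N_{n}\right\}$ is generated by a Groebner basis of homogeneous polynomials in the at most two variables of $Z$, hence $c\left(N_{1},\ldots,N_{n}\right)<1$, and then Proposition \ref{prop:friedrichs-number-equiv-conditions} gives $M=N_{1}+\ldots+N_{n}$. Therefore $M$ is $p$-essentially decomposable for every $p>d$, and Theorem \ref{thm:ess-decomp-implies-ess-norm} yields that $M$ is $p$-essentially normal for every $p>d$.

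Since the substantive ingredients -- Guo and Wang's theorem on singly generated submodules, Shalit's Groebner-basis estimate as packaged in Proposition \ref{prop:grobner-basis-implies-friedrichs-num}, and the implication that essential decomposability implies essential normality -- are all available, I do not expect a genuine obstacle. The only point that requires care is the commutative-algebra bookkeeping in the first paragraph: producing a homogeneous Groebner basis inside $\mathbb{C}\left[Z\right]$ that still generates $I$ after extension to $\mathcal{P}_{d}$ and remains a Groebner basis there, so that the hypotheses of Proposition \ref{prop:grobner-basis-implies-friedrichs-num} are literally met. The degenerate cases $\left|Z\right|\leq 1$, in which $F$ consists of constants or of one-variable (hence monomial) polynomials, are also covered directly by Theorem \ref{thm:submodule-monomials-essdecomp}, but the argument above handles them uniformly.
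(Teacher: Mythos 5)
Your proposal is correct and follows essentially the same route as the paper: pass to a homogeneous Groebner basis $\left\{p_{1},\ldots,p_{n}\right\}\subseteq\mathbb{C}\left[Z\right]$, apply Guo--Wang to each singly generated $N_{i}$, use Proposition \ref{prop:grobner-basis-implies-friedrichs-num} together with Proposition \ref{prop:friedrichs-number-equiv-conditions} to get $N=N_{1}+\ldots+N_{n}$, and conclude via Theorem \ref{thm:ess-decomp-implies-ess-norm}. The only difference is that you spell out the commutative-algebra justification for the existence of such a Groebner basis, which the paper takes for granted.
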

\begin{proof}
Let $N$ denote the $H_{d}^{2}$ submodule generated by $F$, and
let $\{p_{1},\ldots,p_{n}\}$ be a Groebner basis consisting of homogeneous
polynomials that generates $N$. Let $N_{1},\ldots,N_{n}$ denote
the $H_{d}^{2}$ submodules generated by $p_{1},\ldots,p_{n}$ respectively.
Then by Theorem \ref{thm:guo-wang-principal-submodules-ess-norm},
$N_{1},\ldots,N_{n}$ are $p$-essentially normal for every $p>d$.
Note that the polynomials $p_{1},\ldots,p_{n}$ belong to $\mathbb{C}[Z]$.
Hence by Proposition \ref{prop:grobner-basis-implies-friedrichs-num},
$N=N_{1}+\ldots+N_{n}$, and $N$ is $p$-essentially decomposable
for every $p>d$. It follows from Theorem \ref{thm:ess-decomp-implies-ess-norm}
that $M$ is $p$-essentially normal for every $p>d$.
\end{proof}

The next result is new. It implies the essential normality of a large
new class of submodules of $rH_{d}^{2}$.
\begin{thm}
\label{thm:disjoint-vars-ess-decomp}Let $F_{1},\ldots,F_{n}$ be
sets of homogeneous polynomials that each generate $p$-essentially
normal submodules of $H_{d}^{2}$ for $p>d$. Suppose that there are
disjoint subsets $Z_{1},\ldots,Z_{n}$ of $\{z_{1},\ldots,z_{d}\}$
such that
\[
F_{i}\subseteq\mathbb{C}[Z_{i}],\quad1\leq i\leq n.
\]
 Let $X_{1},\ldots,X_{n}$ be arbitrary sets of vectors in $\mathbb{C}^{r}$.
Then the $rH_{d}^{2}$ submodule generated by the set of vector-valued
polynomials
\begin{equation}
\{p\otimes\xi\mid p\in F_{i},\ \xi\in X_{i},\ 1\leq i\leq n\}\label{eq:thm-disjoint-vars-ess-decomp-1}
\end{equation}
is $p$-essentially normal.\end{thm}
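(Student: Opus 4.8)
The plan is to reduce the statement to Theorem~\ref{thm:ess-norm-of-perp-submodules}, following the same pattern used to derive Theorems~\ref{thm:ortho-linear-poly-ess-decomp} and~\ref{thm:submodule-monomials-essdecomp}. First I would let $N_i$ denote the $H_d^2$ submodule generated by $F_i$ for $1 \le i \le n$. By hypothesis each $N_i$ is $p$-essentially normal, and hence trivially $p$-essentially decomposable. Since $Z_1,\ldots,Z_n$ are pairwise disjoint and $F_i \subseteq \mathbb{C}\left[Z_i\right]$, Proposition~\ref{prop:disjoint-vars-family-perpendicular} shows that the family $\left\{N_1,\ldots,N_n\right\}$ is perpendicular. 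Thus $\left\{N_1,\ldots,N_n\right\}$ is a perpendicular family of $p$-essentially decomposable submodules of $H_d^2$, which is exactly the input required by Theorem~\ref{thm:ess-norm-of-perp-submodules}.

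Next I would pass to the tensor-product side. Let $V_i$ denote the subspace of $\mathbb{C}^r$ spanned by $X_i$, and set $M_i = N_i \otimes V_i$ for $1 \le i \le n$. The one step that requires a little care is the identification
\[
M = M_1 \vee \ldots \vee M_n,
\]
where $M$ is the $rH_d^2$ submodule generated by the set in~(\ref{eq:thm-disjoint-vars-ess-decomp-1}). To establish it, I would first observe that the submodule generated by a single vector-valued polynomial $p \otimes \xi$ is $N_p \otimes \mathbb{C}\xi$, where $N_p$ is the submodule of $H_d^2$ generated by $p$; consequently the submodule generated by $\left\{p \otimes \xi \mid p \in F_i,\ \xi \in X_i\right\}$ is the closed linear span of $\left\{u \otimes \xi \mid p \in F_i,\ u \in N_p,\ \xi \in X_i\right\}$. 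Since the closed linear span of the elementary tensors running over a product set $S \times T$ is the tensor product of the closed linear span of $S$ with that of $T$ (routine, as $\mathbb{C}^r$ is finite dimensional), and since $\bigvee_{p \in F_i} N_p = N_i$ while $\operatorname{span} X_i$ has closure $V_i$, this closed span is $N_i \otimes V_i = M_i$. Taking the join over $i$ gives $M = M_1 \vee \ldots \vee M_n$.

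With these preliminaries, the proof concludes in one step: $\left\{N_1,\ldots,N_n\right\}$ is a perpendicular family of $p$-essentially decomposable submodules of $H_d^2$, the $V_i$ are subspaces of $\mathbb{C}^r$, and $M = M_1 \vee \ldots \vee M_n$ with $M_i = N_i \otimes V_i$, so Theorem~\ref{thm:ess-norm-of-perp-submodules} applies and shows that $M$ is $p$-essentially normal.

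I do not expect a real obstacle here. The analytic content all sits upstream, in Theorem~\ref{thm:angle-tensored-perpendicular-submodules} (a positive angle between perpendicular submodules of $H_d^2$ is preserved under tensoring with arbitrary subspaces of $\mathbb{C}^r$) and in Theorem~\ref{thm:ess-decomp-implies-ess-norm} (essential decomposability implies essential normality), both already packaged inside Theorem~\ref{thm:ess-norm-of-perp-submodules}. The only points to get right are the bookkeeping ones: that the generated submodule splits as the join $M_1 \vee \ldots \vee M_n$ of the tensored submodules, and that the disjoint-variables hypothesis genuinely licenses the appeal to Proposition~\ref{prop:disjoint-vars-family-perpendicular}. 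The first theorem stated in the introduction is then obtained as the special case in which each $Z_i$ has size at most $2$, since in that case Theorem~\ref{thm:shalit-ess-norm} already guarantees that each $N_i$ is $p$-essentially normal for every $p > d$.
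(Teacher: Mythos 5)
Your proposal is correct and follows the paper's proof of this theorem essentially verbatim: set $N_i$ to be the submodule generated by $F_i$, invoke Proposition \ref{prop:disjoint-vars-family-perpendicular} for perpendicularity, identify the generated submodule with $M_1\vee\ldots\vee M_n$ where $M_i=N_i\otimes V_i$, and apply Theorem \ref{thm:ess-norm-of-perp-submodules}. The only difference is that you spell out the identification $M=M_1\vee\ldots\vee M_n$, which the paper simply asserts.
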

\begin{proof}
Let $N_{1},\ldots,N_{n}$ denote the $H_{d}^{2}$ submodules generated
by $F_{1},\ldots,F_{n}$ respectively, and let $V_{1},\ldots,V_{n}$
denote the $\mathbb{C}^{r}$ subspaces spanned by $X_{1},\ldots,X_{n}$
respectively. Let $M$ denote the $rH_{d}^{2}$ submodule generated
by the set (\ref{eq:thm-disjoint-vars-ess-decomp-1}), and let $M_{1},\ldots,M_{n}$
denote the $rH_{d}^{2}$ submodules
\[
M_{i}=N_{i}\otimes V_{i},\quad1\leq i\leq n.
\]
The submodules $N_{1},\ldots,N_{n}$ are $p$-essentially normal by
assumption, and Proposition \ref{prop:disjoint-vars-family-perpendicular}
implies that the family $\{N_{1},\ldots,N_{n}\}$ is perpendicular.
Note that $M=\overline{M_{1}+\ldots+M_{n}}$. Hence by Theorem \ref{thm:ess-norm-of-perp-submodules},
$M$ is also $p$-essentially normal.
\end{proof}

Replacing the use of Proposition \ref{prop:disjoint-vars-family-perpendicular}
in the proof of Theorem \ref{thm:disjoint-vars-ess-decomp} with Proposition
\ref{prop:disjoint-vars-family-perpendicular-1} and Proposition \ref{prop:disjoint-vars-family-perpendicular-2}
respectively, we immediately obtain the following strengthened results.

\begin{thm}
Let $F_{1},\ldots,F_{n}$ be sets of homogeneous polynomials that
each generate $p$-essentially normal submodules of $H_{d}^{2}$ for
$p>d$. Suppose that the sets
\[
\left\{ \partial^{\alpha}\left(p\right)\mid\left|\alpha\right|=\deg\left(p\right)-1,\ \alpha\in\mathbb{N}_{0}^{d},\ p\in F_{i}\right\} ,\quad1\leq i\leq n,
\]
are mutually orthogonal. Let $X_{1},\ldots,X_{n}$ be arbitrary sets
of vectors in $\mathbb{C}^{r}$. Then the $rH_{d}^{2}$ submodule
generated by the set of vector-valued polynomials
\[
\left\{ p\otimes\xi\mid p\in F_{i},\ \xi\in X_{i},\ 1\leq i\leq n\right\} 
\]
is $p$-essentially normal.
\end{thm}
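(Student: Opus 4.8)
The plan is to reuse the proof of Theorem~\ref{thm:disjoint-vars-ess-decomp} almost verbatim, with the sole change that the disjoint-variables hypothesis, previously invoked through Proposition~\ref{prop:disjoint-vars-family-perpendicular}, is replaced by the hypothesis on orthogonality of top-order partial derivatives, which still delivers perpendicularity of the scalar submodules, now through Proposition~\ref{prop:disjoint-vars-family-perpendicular-1}.

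First I would let $N_1,\ldots,N_n$ denote the $H_d^2$ submodules generated by $F_1,\ldots,F_n$ respectively, and let $V_1,\ldots,V_n$ denote the subspaces of $\mathbb{C}^r$ spanned by $X_1,\ldots,X_n$ respectively. Setting $M_i = N_i\otimes V_i$ for $1\le i\le n$, and letting $M$ be the $rH_d^2$ submodule generated by the displayed set of vector-valued polynomials, I would record the bookkeeping identity $M = M_1\vee\ldots\vee M_n$: each generator $p\otimes\xi$ with $p\in F_i$ and $\xi\in X_i$ lies in $N_i\otimes V_i = M_i$, while conversely every $M_i$ is generated as a submodule by such elements, so the submodule generated by all of them is precisely the join of the $M_i$.

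Then I would feed in the two structural inputs. By hypothesis each $N_i$ is $p$-essentially normal, hence in particular $p$-essentially decomposable. Since the sets $\{\partial^\alpha(p)\mid |\alpha| = \deg(p)-1,\ \alpha\in\mathbb{N}_0^d,\ p\in F_i\}$ are mutually orthogonal, Proposition~\ref{prop:disjoint-vars-family-perpendicular-1} shows that $\{N_1,\ldots,N_n\}$ is a perpendicular family. Applying Theorem~\ref{thm:ess-norm-of-perp-submodules} to this perpendicular family of $p$-essentially decomposable submodules together with the subspaces $V_1,\ldots,V_n$ then yields at once that $M = M_1\vee\ldots\vee M_n$ is $p$-essentially normal.

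There is no genuine obstacle at this stage: all the weight has already been carried by Proposition~\ref{prop:disjoint-vars-family-perpendicular-1} (which in turn rests on the change-of-variables results of Carlini and Reznick) and by Theorem~\ref{thm:ess-norm-of-perp-submodules}. The only point requiring any attention is that the choice $V_i = \operatorname{span}(X_i)$ is exactly what makes $M_1\vee\ldots\vee M_n$ equal to the submodule generated by the prescribed vector-valued polynomials, and this is immediate from the definitions.
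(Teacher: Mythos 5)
Your proposal is correct and is exactly the paper's intended argument: the paper obtains this theorem by repeating the proof of Theorem \ref{thm:disjoint-vars-ess-decomp} with Proposition \ref{prop:disjoint-vars-family-perpendicular} replaced by Proposition \ref{prop:disjoint-vars-family-perpendicular-1}, which is precisely what you do. The bookkeeping identity $M = M_1\vee\ldots\vee M_n$ and the observation that essential normality gives essential decomposability are both handled as in the paper.
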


\begin{thm}
Let $F_{1},\ldots,F_{n}$ be sets of homogeneous polynomials that
each generate $p$-essentially normal submodules of $H_{d}^{2}$ for
$p>d$. Suppose that the sets
\[
\left\{ \left(\nabla p\right)\left(\lambda\right)\mid\lambda\in\mathbb{C}^{d},\ p\in F_{i}\right\} ,\quad1\leq i\leq n,
\]
are mutually orthogonal. Let $X_{1},\ldots,X_{n}$ be arbitrary sets
of vectors in $\mathbb{C}^{r}$. Then the $rH_{d}^{2}$ submodule
generated by the set of vector-valued polynomials
\[
\left\{ p\otimes\xi\mid p\in F_{i},\ \xi\in X_{i},\ 1\leq i\leq n\right\} 
\]
is $p$-essentially normal.
\end{thm}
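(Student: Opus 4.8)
The plan is to deduce this statement from Theorem~\ref{thm:ess-norm-of-perp-submodules} in exactly the way Theorem~\ref{thm:disjoint-vars-ess-decomp} was deduced, but invoking the gradient criterion for perpendicularity (Proposition~\ref{prop:disjoint-vars-family-perpendicular-2}) in place of the disjoint-variables criterion (Proposition~\ref{prop:disjoint-vars-family-perpendicular}). In other words, the gradient hypothesis is only used to replace one input to the argument, and everything downstream is unchanged.

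First I would set up the relevant submodules. Let $N_{1},\ldots,N_{n}$ denote the $H_{d}^{2}$ submodules generated by $F_{1},\ldots,F_{n}$ respectively, and let $V_{1},\ldots,V_{n}$ denote the subspaces of $\mathbb{C}^{r}$ spanned by $X_{1},\ldots,X_{n}$ respectively. Let $M$ denote the $rH_{d}^{2}$ submodule generated by the given set of vector-valued polynomials, and set $M_{i}=N_{i}\otimes V_{i}$ for $1\leq i\leq n$. Since each $p\otimes\xi$ with $p\in F_{i}$ and $\xi\in X_{i}$ lies in $M_{i}$, and conversely $M_{i}$ is generated as an $rH_{d}^{2}$ submodule by these tensors, one obtains $M=M_{1}\vee\ldots\vee M_{n}$.

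Next I would verify the two hypotheses of Theorem~\ref{thm:ess-norm-of-perp-submodules}. The submodules $N_{1},\ldots,N_{n}$ are $p$-essentially normal by assumption, hence trivially $p$-essentially decomposable. The mutual orthogonality of the gradient sets $\left\{\left(\nabla p\right)\left(\lambda\right)\mid\lambda\in\mathbb{C}^{d},\ p\in F_{i}\right\}$ is precisely the hypothesis of Proposition~\ref{prop:disjoint-vars-family-perpendicular-2}, which then yields that the family $\left\{N_{1},\ldots,N_{n}\right\}$ is perpendicular. With both hypotheses in hand, Theorem~\ref{thm:ess-norm-of-perp-submodules} applies to $M_{1},\ldots,M_{n}$ and gives that $M=M_{1}\vee\ldots\vee M_{n}$ is $p$-essentially normal, which is the claim.

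Since the theorem is an immediate assembly of previously established results, there is no substantial obstacle in the argument itself; the only point requiring a moment of care is the identification $M=M_{1}\vee\ldots\vee M_{n}$, which rests on the observation that the submodule generated by $\left\{p\otimes\xi\mid p\in F_{i},\ \xi\in X_{i}\right\}$ coincides with $N_{i}\otimes V_{i}$. All of the genuinely hard work has already been done elsewhere: the convex-geometry input guaranteeing that the vector sum $M_{1}+\ldots+M_{n}$ is closed once the angle is positive (Proposition~\ref{prop:friedrichs-number-equiv-conditions}), the stability of perpendicularity under tensoring with finite-dimensional spaces (Theorem~\ref{thm:angle-tensored-perpendicular-submodules}), the passage from $p$-essential decomposability to $p$-essential normality via the holomorphic functional calculus (Theorem~\ref{thm:ess-decomp-implies-ess-norm}), and Carlini's unitary change-of-variables lemma underlying Proposition~\ref{prop:disjoint-vars-family-perpendicular-2}.
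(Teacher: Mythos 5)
Your proposal is correct and follows the paper's own route exactly: the paper obtains this theorem by replacing the use of Proposition~\ref{prop:disjoint-vars-family-perpendicular} in the proof of Theorem~\ref{thm:disjoint-vars-ess-decomp} with Proposition~\ref{prop:disjoint-vars-family-perpendicular-2}, which is precisely your argument. The setup $M_{i}=N_{i}\otimes V_{i}$, the identification $M=M_{1}\vee\ldots\vee M_{n}$, and the appeal to Theorem~\ref{thm:ess-norm-of-perp-submodules} all match the paper.
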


The next theorem follows from a combination of the results above.
\begin{thm}
\label{thm:nicest-thm}Let $F_{1},\ldots,F_{n}$ be sets of homogeneous
polynomials in $\mathbb{C}[z]$. Suppose that there are disjoint subsets
$Z_{1},\ldots,Z_{n}$ of $\left\{ z_{1},\ldots,z_{d}\right\} $, each
of size at most $2$, such that
\[
F_{i}\subseteq\mathbb{C}\left[Z_{i}\right],\quad1\leq i\leq n.
\]
Let $X_{1},\ldots,X_{n}$ be arbitrary sets of vectors in $\mathbb{C}^{r}$.
Then the $H_{d}^{2}\otimes\mathbb{C}^{r}$ submodule generated by
the set of vector-valued polynomials.
\[
\left\{ p\otimes\xi\mid p\in F_{i},\ \xi\in X_{i},\ 1\leq i\leq n\right\} 
\]
is $p$-essentially normal for every $p>d$.\end{thm}
\begin{proof}
This follows immediately from Theorem \ref{thm:shalit-ess-norm} and
Theorem \ref{thm:disjoint-vars-ess-decomp}. 
\end{proof}

\begin{example}
For every even $d\geq1$ and $n\geq1$, let $N$ denote the $H_{d}^{2}$
submodule generated by the set of polynomials $p_{1},\ldots,p_{d/2}$,
where
\begin{align*}
p_{1}\left(z_{1},\ldots,z_{d}\right) & =z_{1}^{n}+z_{1}^{n-1}z_{2}+\ldots+z_{1}z_{2}^{n-1}+z_{2}^{n}\\
 & \ \vdots\\
p_{d/2}\left(z_{1},\ldots,z_{d}\right) & =z_{d-1}^{n}+z_{d-1}^{n-1}z_{d}+\ldots+z_{d-1}z_{d}^{n-1}+z_{d}^{n}
\end{align*}
\[
.
\]
 Then Theorem \ref{thm:nicest-thm} implies that $N$ is $p$-essentially
normal for every $p>d$.
\end{example}

\section*{Acknowledgements}

The author is grateful to Ken Davidson and Orr Shalit for many helpful
comments and suggestions. The author would also like to thank the
anonymous referee for their suggestions, which greatly improved the
exposition.

\end{document}